\newsavebox{\pullback}
\sbox\pullback{%
\begin{tikzpicture}%
\draw (0,0) -- (1ex,0ex);%
\draw (1ex,0ex) -- (1ex,1ex);%
\end{tikzpicture}}
\DeclareRobustCommand{\notsquare}{\mathord{\mathpalette\generic@not\square}}
\newcommand{\generic@not}[2]{%
  \sbox\z@{$\m@th#1/$}%
  \sbox\tw@{$\m@th#1#2$}%
  \sbox\z@{\raisebox{\dimexpr(\ht\tw@-\dp\tw@-\ht\z@+\dp\z@)/2\relax}{$\m@th#1/$}}%
  \vphantom{\usebox{\z@}}%
  \ooalign{\hidewidth\usebox{\z@}\hidewidth\cr$\m@th#1#2$\cr}%
}
\theoremstyle{definition}
\newtheorem{definition}{Definition}[section]
\newtheorem{example}[definition]{Example}
\theoremstyle{theorem}
\newtheorem{theorem}[definition]{Theorem}
\newtheorem*{theorem*}{Theorem}
\newtheorem{proposition}[definition]{Proposition}
\newtheorem{corollary}[definition]{Corollary}
\newtheorem*{corollary*}{Corollary}
\theoremstyle{remark}
\newtheorem*{remark}{Remark}
\newtheorem*{notation}{Notation}
\newtheorem*{convention}{Convention}
\DeclareFontFamily{U}{MnSymbolC}{}
\DeclareFontShape{U}{MnSymbolC}{m}{n}{
    <-6>  MnSymbolC5
   <6-7>  MnSymbolC6
   <7-8>  MnSymbolC7
   <8-9>  MnSymbolC8
   <9-10> MnSymbolC9
  <10-12> MnSymbolC10
  <12->   MnSymbolC12}{}
\DeclareSymbolFont{MnSyC}{U}{MnSymbolC}{m}{n}
\DeclareMathSymbol{\dotminus}{\mathbin}{MnSyC}{24}
\begin{document}
\title{\Huge \bfseries Game semantics of universes}
\author{\Large \bfseries Norihiro Yamada \\ \\
{\tt yamad041@umn.edu} \\
School of Mathematics \\
University of Minnesota
}

\maketitle

\begin{abstract}
This work extends the present author's computational game semantics of Martin-L\"{o}f type theory to the cumulative hierarchy of \emph{universes}. 
This extension completes game semantics of all standard types of Martin-L\"{o}f type theory for the first time in the 30 years history of modern game semantics.\footnote{More extensional, domain and realisability semantics of universes has been established though \cite{palmgren1993information,streicher2012semantics,blot2018extensional}.}
As a result, the powerful combinatorial reasoning of game semantics becomes available for the study of universes and types generated by them. 
A main challenge in achieving game semantics of universes comes from a \emph{conflict} between identity types and universes: Naive game semantics of the encoding of an identity type by a universe induces a decision procedure on the equality between functions, a contradiction to a well-known fact in recursion theory.
We overcome this problem by novel games for universes that encode games for identity types \emph{without deciding the equality}.
\end{abstract}

\maketitle


\tableofcontents

\section{Introduction}
\label{Introduction}
For this introduction, we assume that the reader is familiar with the syntax of dependent type theories and universes \cite{hofmann1997syntax}, but not with game semantics \cite{abramsky1997semantics,hyland1997game}.

\subsection{Martin-L\"{o}f type theory and the meaning explanation}
On the one hand, \emph{formal systems} \cite{shoenfield1967mathematical} are a class of symbolic or \emph{syntactic} formalisations of mathematics, and \emph{constructive mathematics} \cite{troelstra1988constructivism} is a family of computational or \emph{constructive} schools in mathematics.
On the other hand, \emph{semantics} of a formal system is an assignment of syntax-free objects to syntactic objects of the formal system, where the former serves as the `meaning' or \emph{interpretation} of the latter.

\emph{Martin-L\"{o}f type theory (MLTT)} \cite{martin1975intuitionistic,martin1984intuitionistic,martin1998intuitionistic} is a prominent formal system for constructive mathematics, and it is comparable to axiomatic set theory \cite{zermelo1908untersuchungen,fraenkel1922grundlagen} for classical mathematics.
The fundamental idea of MLTT is to regard (mathematical) objects and proofs in constructive mathematics uniformly as \emph{computations} in an informal sense, and MLTT is a syntactic formalisation of this beautiful idea \cite{martin1982constructive}.
Hence, objects and proofs in MLTT are unified into \emph{terms}, where formulas are called \emph{types}.
This standard yet informal semantics of MLTT is called the \emph{meaning explanation} \cite[\S 5]{dybjer2016intuitionistic}.

Nevertheless, by its informal nature, the meaning explanation cannot serve as a mathematically firm ground to analyse, justify or develop MLTT.
Besides, MLTT is an intricate formal system that inevitably contains superficial syntactic details, which makes it difficult to study the meta-theory of MLTT.

\subsection{Game semantics of Martin-L\"{o}f type theory}
This problem calls for \emph{mathematical} semantics \cite{gunter1992semantics} of MLTT that faithfully formalises the meaning explanation, abstracting the inessential syntactic details, and advances the meta-theoretic study of MLTT.
Motivated in this way, the present author has established \emph{game semantics of MLTT} \cite{yamada2022game}.

\emph{Game semantics} \cite{abramsky1997semantics,hyland1997game} is a class of mathematical semantics that interprets types by \emph{games} between \emph{Player} (or a mathematician) and \emph{Opponent} (or an oracle), and terms by \emph{strategies} for Player on how to play on games. 
Games are a class of rooted directed forests, and strategies are algorithms for Player on how to walk on (or play) games alternately with Opponent in such a way that it is Player's \emph{win}.

We write a walk or \emph{play} in a game by a potentially infinite sequence of finite sequences 
\begin{equation*}
\boldsymbol{\epsilon}, m_1, m_1m_2, m_1m_2m_3, \dots,
\end{equation*}
where $\boldsymbol{\epsilon}$ is the empty sequence, each element or \emph{move} $m_i$ is a vertex of the game, and each sequence or \emph{position} $m_1m_2 \dots m_n$ is a finite path from the root in the game.
By convention, the first move $m_1$ is always made by Opponent, and then Player and Opponent alternately make moves.
Thus, the moves $m_{2i+1}$ are made by Opponent, and the other ones $m_{2i}$ by Player ($i \in \mathbb{N}$). 
Because a strategy describes the next move to be made by Player, if any, we describe its computational step by the partial function
\begin{equation*}
m_1 \mapsto m_2, m_1m_2m_3 \mapsto m_4, \dots.
\end{equation*}

The game semantics of MLTT formalises the meaning explanation \emph{syntax-independently} and \emph{intuitively} by interpreting terms as strategies or \emph{interactive computations} between Player and Opponent.
In addition, the game semantics turns out to be a highly effective tool for the meta-theoretic study of MLTT; e.g., it verifies the \emph{independence of Markov's principle} \cite{yamada2022game}, which is not possible by most other mathematical semantics of MLTT such as Hyland's \emph{effective topos} \cite{hyland1982effective}.
The point is that game semantics is unique in its interpretation of terms by strategies or \emph{intensional processes}, while other mathematical semantics interprets terms by extensional objects such as functions.
Because terms are also intensional objects, computing in a step-by-step fashion, game semantics achieves a very tight correspondence between terms and strategies, which makes itself an exceptionally powerful tool for the study of formal systems.

\subsection{Examples of games and strategies}
\label{ExamplesOfGamesAndStrategies}
In the following, let us see some simple examples of games and strategies as a preparation for \S\ref{HowToEncodeGamesByStrategies}.
For instance, the game $N$ of natural numbers is the rooted tree (which is infinite in width)
\begin{diagram}
& & q & & \\
& \ldTo(2, 2) \ldTo(1, 2) & \dTo & \rdTo(1, 2) \ \dots & \\
0 & 1 & 2 & 3 & \dots
\end{diagram}
in which a play starts with Opponent's move or question $q$ (`What is your number?') and ends with Player's move or answer $n \in \mathbb{N}$ (`My number is $n$!'). 
This natural number game $N$ is not very different from the set $\mathbb{N}$ of all natural numbers, and there is a much more intensional game for natural numbers \cite{yamada2019game}. 
However, the game $N$ is simpler and suffices for our purpose. 
A strategy $\underline{7}$ on $N$, written $\underline{7} : N$, corresponding to the number $7 \in \mathbb{N}$ for instance, is the map $q \mapsto 10$.
In the rest of this introduction, we describe games by listing their positions. 
For example, the set of all positions of $N$ is $\{ \boldsymbol{\epsilon}, q \} \cup \{ \, qn \mid n \in \mathbb{N} \, \}$.

There is a binary construction $\&$ on games, called \emph{product}, which forms binary product in the category of games and strategies.
The product $A \mathbin{\&} B$ of games $A$ and $B$ are simply the disjoint union of $A$ and $B$.
In other words, a position of $A \mathbin{\&} B$ is either a position of $A$ or $B$.
For instance, a maximal position of the product $N \mathbin{\&} N$ of the game $N$ and itself is either of the following forms\footnote{The diagrams are only to make it explicit which component game each move belongs to; the two positions are just finite sequences $q_{[0]} n_{[0]}$ and $q_{[1]} m_{[1]}$.}:
\begin{mathpar}
\begin{tabular}{ccccccccc}
$N_{[0]}$ & $\&$ & $N_{[1]}$ &&&& $N_{[0]}$ & $\&$ & $N_{[1]}$ \\ \cline{1-3} \cline{7-9}
$q_{[0]}$&&&&&&&& $q_{[1]}$ \\
$n_{[0]}$&&&&&&&& $m_{[1]}$
\end{tabular}
\end{mathpar}
where $n, m \in \mathbb{N}$, and the subscripts $(\_)_{[i]}$ ($i = 0, 1$) are arbitrary \emph{tags} to distinguish the two copies of $N$. 
We often omit the tags $(\_)_{[i]}$ when it does not bring confusion.
We write $\langle \underline{n}, \underline{m} \rangle$ for the strategy on $N \mathbin{\&} N$ that plays as in the above diagrams, which forms the \emph{pairing} of the strategies $\underline{n}, \underline{m} : N$. 

Another central construction $\multimap$, called \emph{linear implication}, captures the notion of \emph{linear functions}, i.e., functions that consume exactly one input to produce an output. 
A position of the linear implication $A \multimap B$ between $A$ and $B$ is an interleaving mixture of a position of $A$ and a position of $B$ such that
\begin{enumerate}

\item The first element of the position must be a move of $B$; 

\item A change of the $AB$-parity in the position must be made by Player.

\end{enumerate}
For example, a typical position of the linear implication $N \multimap N$ is
\begin{mathpar}
\begin{tabular}{ccc}
$N_{[0]}$ & $\multimap$ & $N_{[1]}$ \\ \hline 
&&$q_{[1]} $ \\
$q_{[0]}$ && \\
$n_{[0]}$&& \\
&&$m_{[1]}$
\end{tabular}
\end{mathpar}
where $n, m \in \mathbb{N}$, which can be read as follows:
\begin{enumerate}
\item Opponent's question $q_{[1]}$ for an output (`What is your output?');
\item Player's question $q_{[0]}$ for an input (`Wait, what is your input?');
\item Opponent's answer, say, $n_{[0]}$, to $q_{[0]}$ (`OK, here is an input $n$.');
\item Player's answer, say, $m_{[1]}$, to $q_{[1]}$ (`Alright, the output is then $m$.').
\end{enumerate}
This play corresponds to any linear function $\mathbb{N} \rightarrow \mathbb{N}$ that maps $n \mapsto m$.
The strategy $\mathrm{succ}$ on $N \multimap N$ for the successor function is the map $q_{[1]} \mapsto q_{[0]}, q_{[1]}q_{[0]}n_{[0]} \mapsto n+1_{[1]}$, or diagrammatically 
\begin{mathpar}
\begin{tabular}{ccc}
$N_{[0]}$ & $\stackrel{\mathrm{succ}}{\multimap}$ & $N_{[1]}$ \\ \hline 
&&$q_{[1]}$ \\
$q_{[0]}$&& \\
$n_{[0]}$&& \\
&&$n+1_{[1]}$
\end{tabular}
\end{mathpar}

Let us remark here that the following play, which corresponds to a \emph{constant} linear function that maps $x \mapsto m$ for all $x \in \mathbb{N}$, is also possible: $q_{[1]} \mapsto m_{[1]}$.
Thus, strictly speaking, $A \multimap B$ is the game of \emph{affine functions} from $A$ to $B$, but we follow the standard convention to call $\multimap$ linear implication. 

However, the linear implication $N \mathbin{\&} N \multimap N$ cannot accommodate strategies that compute binary functions such as addition because maximal positions of this game are either of the following forms:
\begin{mathpar}
\begin{tabular}{ccccc}
$N$ & $\&$ & $N$ & $\multimap$ & $N$ \\ \hline 
&&&&$q $ \\
$q$ && \\
$n$&& \\
&&&&$m$
\end{tabular}
\and
\begin{tabular}{ccccc}
$N$ & $\&$ & $N$ & $\multimap$ & $N$ \\ \hline 
&&&&$q $ \\
&&$q$ && \\
&&$n$&& \\
&&&&$m$
\end{tabular}
\and
\begin{tabular}{ccccc}
$N$ & $\&$ & $N$ & $\multimap$ & $N$ \\ \hline 
&&&&$q $ \\
&&&&$m$
\end{tabular}
\end{mathpar}

The unary construction $\oc$ on games, called \emph{exponential}, addresses this problem by defining the desired game $A \Rightarrow B$ for ordinary (not necessarily linear) functions from $A$ to $B$ by $A \Rightarrow B \colonequals \oc A \multimap B$.
This idea comes from \emph{linear logic} \cite{girard1987linear}.
A position of the exponential $\oc A$ is an interleaving mixture of a finite number of positions of $A$ such that a switch between different copies of positions of $A$ inside $\oc A$ must be made by Opponent. 
For instance, the exponential $\oc (N \mathbin{\&} N)$ accommodates the positions
\begin{mathpar}
\begin{tabular}{ccccc}
$\oc (N$ & $\&$ & $N)$ \\ \hline 
$q $ \\
$n$ \\
&& $q$ \\
&& $m$
\end{tabular}
\and
\begin{tabular}{ccccc}
$\oc (N$ & $\&$ & $N)$ \\ \hline 
&&$q $ \\
&&$n$ \\
$q$ \\
$m$
\end{tabular}
\end{mathpar}
so that there is are strategies 
\begin{mathpar}
\begin{tabular}{ccccc}
$N$ & $\&$ & $N$ & $\stackrel{\mathrm{add}}{\Rightarrow}$ & $N$ \\ \hline 
&&&&$q $ \\
$q$ && \\
$n$&& \\
&&$q$ && \\
&& $m$ && \\
&&$$&& \\
&&&&$n+m$
\end{tabular}
\and
\begin{tabular}{ccccc}
$N$ & $\&$ & $N$ & $\stackrel{\mathrm{add}'}{\Rightarrow}$ & $N$ \\ \hline 
&&&&$q $ \\
&&$q$ && \\
&&$n$&& \\
$q$ && \\
$m$ && \\
&&$$&& \\
&&&&$n+m$
\end{tabular}
\end{mathpar}
both of which compute addition of natural numbers.
These strategies both implement addition, but their algorithms are slightly different, which illustrates the \emph{intensional} nature of game semantics. 

At this point, let us consider the game $(N \Rightarrow N) \Rightarrow N$ of \emph{higher-order} functions, which is higher-order because the domain $N \Rightarrow N$ is the game of functions. 
Note that the domain is the exponential $\oc (N \Rightarrow N)$, so a strategy $\phi$ on the game $(N \Rightarrow N) \Rightarrow N$ may interact with an input strategy $f$ on $\oc (N \Rightarrow N)$ given by Opponent \emph{any finite number of times}. 
Each interaction between $\phi$ and $f$ reveals an input-output pair of $f$, but this process will never collect the complete information about $f$ because there are infinitely many input-output pairs of $f$.
For instance, the strategy $\mathrm{pazo} : (N \Rightarrow N) \Rightarrow N$ that computes the sum $f(0)+f(1)$ for a given function $f : \mathbb{N} \Rightarrow \mathbb{N}$ plays by
\begin{mathpar}
\begin{tabular}{ccccc}
$\oc (\oc N_{[0]}$ & $\multimap$ & $N_{[1]})$ & $\stackrel{\mathrm{pazo}}{\multimap}$ & $N_{[2]}$ \\ \hline 
&&&& $q_{[2]}$ \\
&& $q_{[1]}$ && \\
$q_{[0]}$ && && \\
$0_{[0]}$ && && \\
&& $n_{[1]}$&& \\
&& $q_{[1]}$ && \\
$q_{[0]}$ && && \\
$1_{[0]}$&& && \\
&& $m_{[1]}$ && \\
&&&& $n+m_{[2]}$
\end{tabular}
\end{mathpar}
This play can be read as follows:
\begin{enumerate}
\item Opponent's question $q_{[2]}$ for an output (`What is your output?');
\item Player's question $q_{[1]}$ for an input function (`Wait, your first output please!');
\item Opponent's question $q_{[0]}$ for an input (`What is your first input then?');
\item Player's answer, say, $0_{[0]}$, to the question $q_{[0]}$ (`Here is my first input $0$.');
\item Opponent's answer, say, $n_{[1]}$, to the question $q_{[1]}$ (`OK, then here is my first output $n$.');
\item Player's question $q_{[1]}$ for an input function (`Your second output please!');
\item Opponent's question $q_{[0]}$ for an input (`What is your second input then?');
\item Player's answer, say, $1_{[0]}$, to the question $q_{[0]}$ (`Here is my second input $1$.');
\item Opponent's answer, say, $m_{[1]}$, to the question $q_{[1]}$ (`OK, then here is my second output $m$.');
\item Player's answer, say, $n+m_{[2]}$, to the question $q_{[2]}$ (`Alright, my output is then $n+m$.').
\end{enumerate}
In this play, the strategy $\mathrm{pazo} $ has only revealed the two input-output pairs $(0, n)$ and $(1, m)$ of $f$.

Finally, let us recall the \emph{composition} $\psi \bullet \phi : A \Rightarrow C$ of strategies $\phi : A \Rightarrow B$ and $\psi : A \Rightarrow C$.
For an illustration, consider the strategies $\mathrm{succ}, \mathrm{double} : N \Rightarrow N$ (n.b., $\mathrm{succ}$ this time is not on $N \multimap N$):
\begin{mathpar}
\begin{tabular}{ccc}
$N_{[0]}$ & $\stackrel{\mathrm{succ}}{\Rightarrow}$ & $N_{[1]}$  \\ \hline
&&$q_{[1]}$  \\
$q_{[0]}$&& \\
$m_{[0]}$ && \\
&&$m+1_{[1]}$
\end{tabular}
\and
\begin{tabular}{ccc}
$N_{[2]}$ & $\stackrel{\mathrm{double}}{\Rightarrow}$ & $N_{[3]}$  \\ \hline
&&$q_{[3]}$  \\
$q_{[2]}$&& \\
$n_{[3]}$ && \\
&&$2n_{[2]}$
\end{tabular}
\end{mathpar}
The composition $\mathrm{double} \bullet \mathrm{succ} : N \Rightarrow N$ is calculated as follows. 
First, we have to define the \emph{promotion} $\mathrm{succ}^\dagger : \oc N_{[0]} \multimap \oc N_{[1]}$ of $\mathrm{succ}$, which computes just as $\mathrm{succ} : \oc N_{[0]} \multimap N_{[1]}$ for each position of $\oc N_{[0]} \multimap N_{[1]}$ occurring inside $\oc N_{[0]} \multimap \oc N_{[1]}$.
A typical position played by the promotion therefore looks like 
\begin{mathpar}
\begin{tabular}{ccc}
$\oc N_{[0]}$ & $\stackrel{\mathrm{succ}^\dagger}{\multimap}$ & $\oc N_{[1]}$  \\ \hline
&&$q_{[1]}$  \\
$q_{[0]}$&& \\
$m_{[0]}$ && \\
&&$m+1_{[1]}$ \\
&&$q_{[1]}$  \\
$q_{[0]}$&& \\
$m'_{[0]}$ && \\
&&$m'+1_{[1]}$ \\
&&$q_{[1]}$  \\
$q_{[0]}$&& \\
$m''_{[0]}$ && \\
&&$m''+1_{[1]}$
\end{tabular}
\end{mathpar}

Next, we \emph{synchronise} $\mathrm{succ}^\dagger$ and $\mathrm{double}$ via the codomain $\oc N_{[1]}$ of $\mathrm{succ}^\dagger$ and the domain $\oc N_{[2]}$ of $\mathrm{double}$, for which Player also plays the role of Opponent in $\oc N_{[1]}$ and $\oc N_{[2]}$ by copying her last moves, resulting in
\begin{mathpar}
\begin{tabular}{ccccccc}
$\oc N_{[0]}$ & $\stackrel{\mathrm{succ}^\dagger}{\multimap}$ & $\oc N_{[1]}$ && $\oc N_{[2]}$ & $\stackrel{\mathrm{double}}{\multimap}$ & $N_{[3]}$ \\ \hline
&&&&&& $q_{[3]}$ \\
&&&& \fbox{$q_{[2]}$} && \\
&& \fbox{$q_{[1]}$} &&&& \\
$q_{[0]}$ &&&&&& \\
$n_{[0]}$ &&&&&& \\
&& \fbox{$n+1_{[1]}$} &&&& \\
&&&&\fbox{$n+1_{[2]}$} && \\
&&&&&&$2 \cdot (n+1)_{[3]}$
\end{tabular}
\end{mathpar}
where moves made for the synchronisation are marked by the square boxes just for clarity. 
Importantly, it is assumed that Opponent plays on the \emph{external game} $N_{[0]} \Rightarrow N_{[3]}$, seeing only moves of $\oc N_{[0]}$ or $N_{[3]}$.

The resulting play is to be read as follows:
\begin{enumerate}

\item Opponent's question $q_{[3]}$ for an output in $\oc N_{[0]} \multimap N_{[3]}$ (`What is your output?');

\item Player's question \fbox{$q_{[2]}$} by $\mathrm{double}$ for an input in $\oc N_{[2]} \multimap N_{[3]}$ (`Wait, what is your input?');

\item \fbox{$q_{[2]}$} in turn triggers the question \fbox{$q_{[1]}$} for an output in $\oc N_{[0]} \multimap \oc N_{[1]}$ (`What is your output?');

\item Player's question $q_{[0]}$ by $\mathrm{succ}^\dagger$ for an input in $\oc N_{[0]} \multimap \oc N_{[1]}$ (`Wait, what is your input?');

\item Opponent's answer, say, $n_{[0]}$, to $q_{[0]}$ in $\oc N_{[0]} \multimap \oc N_{[3]}$ (`Here is an input $n$.');

\item Player's answer \fbox{$n+1_{[1]}$} to \fbox{$q_{[1]}$} by $\mathrm{succ}^\dagger$ in $\oc N_{[0]} \multimap \oc N_{[1]}$ (`The output is then $n+1$.');

\item \fbox{$n+1_{[1]}$} in turn triggers the answer \fbox{$n+1_{[2]}$} to \fbox{$q_{[2]}$} in $\oc N_{[2]} \multimap N_{[3]}$ (`Here is the input $n+1$.');

\item Player's answer $2 \cdot (n+1)_{[3]}$ to $q_{[3]}$ by $\mathrm{double}$ in $\oc N_{[0]} \multimap N_{[3]}$ (`The output is $2 \cdot (n+1)$!').

\end{enumerate}

Finally, we hide or delete all moves with the square boxes from the play, resulting in the strategy $\mathrm{double} \bullet \mathrm{succ} : N \Rightarrow N$ for the function $n \mapsto 2 \cdot (n + 1)$ as expected:
\begin{mathpar}
\begin{tabular}{ccc}
$N_{[0]}$ & $\stackrel{\mathrm{double} \bullet \mathrm{succ}}{\Rightarrow}$ & $N_{[3]}$ \\ \hline
&& $q_{[3]}$ \\
$q_{[0]}$&& \\
$n_{[0]}$&& \\
&&$2 \cdot (n + 1)_{[3]}$
\end{tabular}
\end{mathpar}
The category of games and strategies has games as objects, and strategies $\phi : A \Rightarrow B$ as morphisms $A \rightarrow B$, and the composition of strategies just sketched forms the categorical composition.

Moreover, one can compose strategies $\alpha : A$ and $\phi : A \Rightarrow B$ in the same vein, obtaining the composition $\phi \bullet \alpha : B$.
For instance, we have the composition $\mathrm{double} \bullet \underline{n} = \underline{2n}$ for all $n \in \mathbb{N}$.
Alternatively, recall the \emph{terminal game} $T$, which has no move.
Hence, we have the isomorphism $A \cong T \Rightarrow A$, and we do not distinguish strategies on $A$ and $T \Rightarrow A$ since they are essentially the same.
As a result, the composition $\phi \bullet \alpha : B$ can be recasted as the ordinary composition $\phi \bullet \alpha : T \Rightarrow B$ of $\alpha : T \Rightarrow A$ and $\phi : A \Rightarrow B$.

We have seen that strategies interact with each other in a \emph{step-by-step, finitary} fashion.
This unique, \emph{intensional computation} distinguishes game semantics from other mathematical semantics.

\subsection{Martin-L\"{o}f's universes}
One can extend MLTT by a `types of (smaller) types' or \emph{universe} introduced by Martin-L\"{o}f \cite{martin1975intuitionistic}. 
The universe enables MLTT to expand its realm of constructive mathematics significantly.
For instance, the elimination rule of the natural number (N-) type with respect to the universe generates \emph{infinitely} indexed dependent types such as the type of finite lists of natural numbers by mathematical induction.

Besides, the power of the universe is greatly increased when it is combined with Martin-L\"{o}f's \emph{well-founded tree (W-) types} \cite{martin1982constructive}.
For instance, MLTT together with the universe and W-types interprets Aczel's constructive set theory \cite{aczel1986type}.
Moreover, the combination of the universe and W-types offers MLTT a high proof-theoretic strength among constructive formal systems \cite{setzer1993proof,griffor1994strength}.

\subsection{The problem: how to encode games for identity types by strategies}
\label{HowToEncodeGamesByStrategies}
For these significant roles of the universe in MLTT and constructive mathematics, it is a natural aim to extend game semantics to the universe so that its powerful combinatorial reasoning becomes available for the study of the universe and types generated by the universe. 
However, it is a challenge to achieve game semantics of the universe, and it has not been established in the 30 years history of game semantics.\footnote{Blot and Laird \cite{blot2018extensional} interpret universes, but this interpretation is by domain theory, not by game semantics.}

Specifically, the challenge is \emph{how to encode games for identity (Id-) types by strategies}. 
To see this point, recall first that the game semantics \cite{yamada2022game} interprets each dependent type $\mathsf{\Gamma \vdash A \ type}$ roughly by a family $A = (A(\gamma))_{\gamma : \Gamma}$ of games $A(\gamma)$ indexed by strategies $\gamma$ on the game $\Gamma$ that interprets the context $\mathsf{\Gamma}$. 
Also, recall that the introduction rule of each universe $\mathsf{\Gamma \vdash U \ type}$ \emph{encodes} the dependent type $\mathsf{\Gamma \vdash A \ type}$ by a term $\mathsf{\Gamma \vdash En(A) : U}$ in such a way that the computation rule $\mathsf{\Gamma \vdash El(En(A)) = A \ type}$ holds, where the dependent type $\mathsf{x : U \vdash El(x) \ type}$ embodies the elimination rule of the universe by the substitution $\mathsf{\Gamma \vdash u : U} \mapsto \mathsf{\Gamma \vdash El(u) \ type}$.
Note that each universe is a \emph{constant} dependent type, and therefore the game semantics \cite{yamada2022game} should interpret it by a \emph{constant} family of games, which is in turn identified by a single game in the evident way.
Hence, we have to define not only a game $\mathcal{U}$ that interprets the universe $\mathsf{U}$ but also the corresponding encoding of the family $A$ of games by a strategy $\mathrm{En}(A)$ on the function game $\Gamma \Rightarrow \mathcal{U}$ from $\Gamma$ to $\mathcal{U}$, which interprets the introduction rule, and a family $\mathrm{El} = (\mathrm{El}(\mu))_{\mu : \mathcal{U}}$ of games $\mathrm{El}(\mu)$, which interprets the elimination rule, that satisfies $\mathrm{El}(\mathrm{En}(A) \bullet \gamma) = A(\gamma)$ for all $\gamma : \Gamma$, which interprets the computation rule.
Recall that a strategy on the game $\Gamma \Rightarrow \mathcal{U}$ is a certain kind of an \emph{algorithm} that outputs a strategy on the codomain $\mathcal{U}$ from a given input strategy on the domain $\Gamma$.

\if0
Crucially, each computational step of a strategy must be \emph{finitary} since otherwise game semantics would no longer be an intensional model of computation or a reasonable formalisation of the meaning explanation. 
Hence, each computational step of the strategy $\underline{A}$ must be finitary too.
For this reason, a natural idea to define the game $\mathcal{U}$ is to first assign a natural number $\sharp(A) \in \mathbb{N}$ to each game $A$ that interprets a type $\mathsf{\Gamma \vdash A \ type}$ along the inductive construction of $\mathrm{A}$, and then define the game $\mathcal{U}$ in such a way that plays in $\mathcal{U}$ are of the form $q . \sharp(A)$, where $q$ is an arbitrary object. 
Intuitively, the initial move $q$ is Opponent's initial question `What is your game?', and the second move $\sharp(\mathrm{A})$ is Player's answer `My game is $A$!'.
Finally, we define the strategy $\underline{A} : \mathcal{U}$ to be the one that encodes the game $A$ by playing $q \mapsto \sharp(A)$ without ever computing at the domain game $\Gamma$.
\fi

\if0
Now, consider the formation rule of Pi-types and the corresponding introduction rule of a universe:
\begin{mathpar}
\AxiomC{$\mathsf{\Gamma \vdash A \ type}$}
\AxiomC{$\mathsf{\Gamma, x : A \vdash B \ type}$}
\BinaryInfC{$\mathsf{\Gamma \vdash \Pi(A, B) \ type}$}
\DisplayProof
\and
\AxiomC{$\mathsf{\Gamma \vdash \underline{A} : U}$}
\AxiomC{$\mathsf{\Gamma, x : A \vdash \underline{B} : U}$}
\BinaryInfC{$\mathsf{\Gamma \vdash \underline{\Pi}(\underline{A}, \underline{B}) : U}$}
\DisplayProof
\end{mathpar}
Accordingly, even for \emph{uncomputable} strategies $\phi : A \rightarrow \mathcal{U}$, we have to construct the corresponding strategies $\underline{\Pi}(\underline{A}, \phi) : \Gamma \rightarrow \mathcal{U}$, which is impossible because there are uncountable many such strategies $\phi$.
\fi

Now, let us take $\mathrm{A}$ to be the Id-type $\mathsf{f : N \Rightarrow N, g : N \Rightarrow N \vdash Id_{N \Rightarrow N}(f, g) \ type}$ on the function type $\mathsf{N \Rightarrow N}$ from N-type $\mathsf{N}$ to itself. 
Then, the game semantics \cite{yamada2022game} has to interpret the encoding term $\mathsf{f : N \Rightarrow N, g : N \Rightarrow N \vdash \mathrm{En}(Id_{N \Rightarrow N}(f, g)) : U}$ for this Id-type by a strategy $\mathrm{En}(\mathrm{Id}_{N \Rightarrow N}) : (N \Rightarrow N) \mathbin{\&} (N \Rightarrow N) \Rightarrow \mathcal{U}$ that satisfies $\mathrm{El}(\mathrm{En}(\mathrm{Id}_{N \Rightarrow N}) \bullet \langle f, g \rangle) = \mathrm{Id}_{N \Rightarrow N}(\langle f, g \rangle)$ for all $f, g : N \Rightarrow N$, where $\mathrm{Id}_{N \Rightarrow N}$ is the family of games that interprets the Id-type \cite{yamada2022game}.
Crucially, the game $\mathrm{Id}_{N \Rightarrow N}(\langle f, g \rangle)$ depends on the equation $f = g$. 
Thus, the composition $\mathrm{En}(\mathrm{Id}_{N \Rightarrow N}) \bullet \langle f, g \rangle$ \emph{must vary over the cases $f = g$ and $f \neq g$}.

Accordingly, the strategy $\mathrm{En}(\mathrm{Id}_{N \Rightarrow N})$ seems to be an algorithm that decides whether the equation $f = g$ holds for all $f, g : N \Rightarrow N$, a contradiction to a well-known fact in recursion theory \cite{rogers1967theory}.
This corresponds, in game semantics, to that the strategy $\mathrm{En}(\mathrm{Id}_{N \Rightarrow N})$ can learn about only \emph{finite} input-output pairs of $f$ and $g$, so it cannot decide if the equation $f = g$ holds, as illustrated by the diagram  

\begin{mathpar}
\begin{tabular}{ccccccccc}
$(N$ & $\stackrel{f}{\Rightarrow}$ & $N)$ & $\&$ & $(N$ & $\stackrel{g}{\Rightarrow}$ & $N)$ & $\stackrel{\mathrm{En}(\mathrm{Id}_{N \Rightarrow N})}{\Rightarrow}$ & $\mathcal{U}$ \\ \hline
&&&&&&&&$u_1$ \\
&&$q$&&&&&& \\
$q$&&&&&&&& \\
$n$&&&&&&&& \\
&&$f(n)$&&&&&& \\
&&&&&&$q$&& \\
&&&&$q$&&&& \\
&&&&$n'$&&&& \\
&&&&&&$f(n')$&& \\
&&&&&&&$\vdots$& \\
&&&&&&&$\wn$&
\end{tabular}
\end{mathpar}

Let us see more concretely that the following naive method fails due to the problem just sketched.
Let us assign a natural number $\sharp(A)$ to the game $A$ that interprets each type $\mathsf{\Gamma \vdash A \ type}$ along the inductive construction of $\mathrm{A}$, and define a game $\mathcal{U}$ in such a way that maximal positions in $\mathcal{U}$ are of the form $q . \sharp(A)$. 
\begin{mathpar}
\begin{tabular}{c}
$\mathcal{U}$ \\ \hline
$q$ \\
$\sharp(A)$
\end{tabular}
\end{mathpar}
Intuitively, the initial element $q$ is Opponent's question `What is your game?', and the second one $\sharp(\mathrm{A})$ is Player's answer `My game is $A$!'.
Further, let $\mathrm{En}(A) : \Gamma \Rightarrow \mathcal{U}$ be the strategy that encodes the family $A$ of games by playing $q \mapsto \sharp(A)$ without ever computing on the domain $\Gamma$.
For this game $\mathcal{U}$, the strategy $\mathrm{En}(\mathrm{Id}_{N \Rightarrow N})$ would decide if $f = g$ (even without interacting with $f$ or $g$), which is clearly impossible.
\begin{mathpar}
\begin{tabular}{ccc}
$\Gamma$ & $\stackrel{\mathrm{En}(A)}{\not\Rightarrow}$ & $\mathcal{U}$ \\ \hline
&&$q$ \\
&&$\sharp(\mathrm{Id}_{N \Rightarrow N}(\langle f, g \rangle))$
\end{tabular}
\end{mathpar}

\begin{remark}
This naive method does not exploit any intrinsic feature of game semantics, but it actually works for encoding all standard dependent types except Id-types.
Hence, one may say that our main contribution is game semantics of the universe that subsumes the encoding of Id-types.
\end{remark}

\subsection{Our solution: encoding without deciding}
\label{Solution}
A key observation behind our solution to the problem described in \S\ref{HowToEncodeGamesByStrategies} is that
\begin{quote}
The game semantics \cite{yamada2022game} allows the decoding function $\mathrm{El}$ to be \emph{uncomputable} without sacrificing the algorithmic nature of strategies.\footnote{This is because the game semantics of Pi-types (Definition~\ref{DefLinearPiSpaces}) reveals the type dependency \emph{only gradually} so that it is not necessary to compute the value of the function $\mathrm{El}$ in one go. We shall come back to this point in Example~\ref{ExCrucialExample}.} 
In particular, the strategy $\mathrm{En}(\mathrm{Id}_{N \Rightarrow N}) \bullet \langle f, g \rangle$ \emph{does not have to decide} the equality $f = g$; it only has to encode the game $\mathrm{Id}_{N \Rightarrow N}(\langle f, g \rangle)$. 
\end{quote}

This leads us to the following solution.
Let $\sharp(1), \sharp(0), \sharp(N), \sharp(\Pi), \sharp(\Sigma), \sharp(\mathrm{Id}) \in \mathbb{N}$ be arbitrarily fixed pairwise distinct natural numbers.
We then define the game $\mathcal{U}$ in such a way that 
\begin{quote}
The strategy $\mathrm{En}(\mathrm{Id}_{N \Rightarrow N}) \bullet \langle f, g \rangle : \mathcal{U}$ plays first by computing $q \mapsto \sharp(\mathrm{Id})$ (indicating that it encodes an Id-type) and then, depending on the next move by Opponent, by playing as the strategy $\mathrm{En}(N \Rightarrow N)$ (indicating that the encoded Id-type is on the type $\mathsf{N \Rightarrow N}$) or by merely \emph{copy-catting $f$ and $g$} given by Opponent in the step-by-step fashion (indicating that the encoded Id-type is between $f$ and $g$) \emph{without necessarily detecting what $f$ or $g$ is}.
\end{quote}

The point is that this method allows the strategy $\mathrm{En}(\mathrm{Id}_{N \Rightarrow N})$ to encode the family $\mathrm{Id}_{N \Rightarrow N}$ without sacrificing its algorithmic nature: The copy-cat of Opponent's strategies $f$ and $g$ is trivially computable, while the potentially infinite plays by $\mathrm{En}(\mathrm{Id}_{N \Rightarrow N}) \bullet \langle f, g \rangle : \mathcal{U}$ faithfully encode whether or not $f = g$. 

In general, positions in $\mathcal{U}$ consist of symbols $\sharp(X)$ that encode type constructions $X$ and ordinary (i.e., not necessarily symbolic) strategies. 
In the following, let us sketch the definition of $\mathcal{U}$.

First, we have to encode \emph{the base cases}, i.e., the games $1$, $0$ and $N$ that interpret One-, Zero- and N-types, respectively, by strategies on the game $\Gamma \Rightarrow \mathcal{U}$.
For this reason, $\mathcal{U}$ subsumes the positions
\begin{mathpar}
\begin{tabular}{c}
$\mathcal{U}$ \\ \hline 
$q$ \\
$\sharp(1)$
\end{tabular}
\and
\begin{tabular}{c}
$\mathcal{U}$ \\ \hline 
$q$ \\
$\sharp(0)$
\end{tabular}
\and
\begin{tabular}{c}
$\mathcal{U}$ \\ \hline 
$q$ \\
$\sharp(N)$
\end{tabular}
\end{mathpar}
so that there are strategies $\mathrm{En}(1), \mathrm{En}(0), \mathrm{En}(N) : \Gamma \Rightarrow \mathcal{U}$ that compute respectively by
\begin{mathpar}
\mathrm{En}(1) : q \mapsto \sharp(1) 
\and
\mathrm{En}(0) : q \mapsto \sharp(0)  
\and
\mathrm{En}(N) : q \mapsto \sharp(N). 
\end{mathpar}

Next, we consider the inductive step to encode Pi- and Sigma-types. 
Assume that a family $A = (A(\gamma))_{\gamma : \Gamma}$ of games $A(\gamma)$ interprets a type $\mathsf{\Gamma \vdash A \ type}$, and a strategy $\mathrm{En}(A) : \Gamma \Rightarrow \mathcal{U}$ interprets the encoding $\mathsf{\Gamma \vdash En(A) : U}$.
For simplicity, let $\mathsf{\Gamma}$ be the empty context; thus, $\Gamma$ is the terminal game $T$ that has only the trivial strategy, and $A$ is identified with a game.
Assume further that a family $B = (B(\alpha))_{\alpha : A}$ of games $B(\alpha)$ interprets a type $\mathsf{x : A \vdash B \ type}$, and a strategy $\mathrm{En}(B) : A \Rightarrow \mathcal{U}$ interprets the encoding $\mathsf{x : A \vdash En(B) : U}$.
Recall that the game semantics \cite{yamada2022game} interprets the Pi-type $\mathsf{\vdash \Pi(A, B) \ type}$ and the Sigma-type $\mathsf{\vdash \Sigma(A, B) \ type}$ by (the singleton families of) the games $\Pi(A, B)$ and $\Sigma(A, B)$, respectively.
Then, there must be strategies $\mathrm{En}(\Pi(A, B)), \mathrm{En}(\Sigma(A, B)) : T \Rightarrow \mathcal{U} \cong \mathcal{U}$ that respectively encode these families. 
For this reason, the game $\mathcal{U}$ also subsumes the positions
\begin{mathpar}
\begin{tabular}{ccc}
&$\mathcal{U}$& \\ \hline
&$q$& \\
&$\sharp(\Pi)$& \\
$a_1$&& \\
$a_2$&& \\
$\vdots$&&
\end{tabular}
\and
\begin{tabular}{ccc}
&$\mathcal{U}$& \\ \hline
&$q$& \\
&$\sharp(\Pi)$& \\
&&$b_1$ \\
&&$b_2$ \\
&&$\vdots$
\end{tabular}
\and
\begin{tabular}{ccc}
&$\mathcal{U}$& \\ \hline
&$q$& \\
&$\sharp(\Sigma)$& \\
$a_1$&& \\
$a_2$&& \\
$\vdots$&&
\end{tabular}
\and
\begin{tabular}{ccc}
&$\mathcal{U}$& \\ \hline
&$q$& \\
&$\sharp(\Sigma)$& \\
&&$b_1$ \\
&&$b_2$ \\
&&$\vdots$
\end{tabular}
\end{mathpar}
where $a_1 a_2 \dots$ are moves played by the strategy $\mathrm{En}(A) : \mathcal{U}$, and $b_1 b_2 \dots$ by the strategy $\mathrm{En}(B) : A \Rightarrow \mathcal{U}$.
In other words, we define the strategy $\mathrm{En}(\Pi(A, B))$ to be the pairing $\langle \mathrm{En}(A) , \mathrm{En}(B) \rangle : \mathcal{U} \mathbin{\&} (A \Rightarrow \mathcal{U})$ prefixed by the moves $q . \sharp (\Pi)$, and similarly for the strategy $\mathrm{En}(\Sigma(A, B))$.
In this way, the game $\mathcal{U}$ enables the encodings of the games $\Pi(A, B)$ and $\Sigma(A, B)$.
Note, however, that the ambient games $\mathcal{U}$ and $A \Rightarrow \mathcal{U}$ for the positions $a_1 a_2 \dots$ and $b_1 b_2 \dots$, respectively, contain the game $\mathcal{U}$ itself.
In particular, the game $A \Rightarrow \mathcal{U}$ is not the game $\mathcal{U}$ itself but the function game from $A$ to $\mathcal{U}$.
Accordingly, this idea necessitates a nontrivial \emph{recursive} definition of the game $\mathcal{U}$.
Our main technical achievement is to realise such a definition, subsuming the general case where the game $\Gamma$ can be different from the trivial one $T$.

Finally, there must be a strategy $\mathrm{En}(\mathrm{Id}_{A}(\langle \alpha, \alpha' \rangle)) : \mathcal{U}$ for each pair $\alpha, \alpha' : A$ of strategies that encodes (the singleton family of) the game $\mathrm{Id}_A(\langle \alpha, \alpha' \rangle)$.
For this reason, we further add the positions
\begin{mathpar}
\begin{tabular}{cccc}
&$\mathcal{U}$&& \\ \hline
&$q$&& \\
&$\sharp(\mathrm{Id})$&& \\
$a_1$&&& \\
$a_2$&&& \\
$\vdots$&&&
\end{tabular}
\and
\begin{tabular}{cccc}
&$\mathcal{U}$&& \\ \hline
&$q$&& \\
&$\sharp(\mathrm{Id})$&& \\
&&$c_1$& \\
&&$c_2$& \\
&&$\vdots$&
\end{tabular}
\and
\begin{tabular}{cccc}
&$\mathcal{U}$&& \\ \hline
&$q$&& \\
&$\sharp(\mathrm{Id})$&& \\
&&&$c'_1$ \\
&&&$c'_2$ \\
&&&$\vdots$
\end{tabular}
\end{mathpar}
where the moves $a_1 a_2 \dots$ are played by the strategy $\mathrm{En}(A) : \mathcal{U}$, the moves $c_1 c_2 \dots$ by the strategy $\alpha : A$ and the moves $c'_1 c'_2 \dots$ by the strategy $\alpha' : A$.
In other words, we define the strategy $\mathrm{Id}_A(\langle \alpha, \alpha' \rangle)$ to be the pairing $\langle \mathrm{En}(A), \langle \alpha, \alpha' \rangle \rangle : \mathcal{U} \mathbin{\&} (A \mathbin{\&} A)$ prefixed by the moves $q . \sharp(\mathrm{Id})$.
It is easy to see how this can be lifted to the general case, where the game $\Gamma$ can be different from the trivial one $T$, and the strategies $\alpha, \alpha'$ are on the game $\Pi(\Gamma, A)$ for Pi-types.
This generalisation is illustrated in the next paragraph. 

Now, let us see how this idea solves the problem sketched in \S\ref{HowToEncodeGamesByStrategies}. 
Instead of the trivial assumption $\Gamma = T$, take $\Gamma = (N \Rightarrow N) \mathbin{\&} (N \Rightarrow N)$, and further let $A$ be the singleton family $\{ N \Rightarrow N \}$, together with the projections $\alpha = \pi_1 : (N \Rightarrow N) \mathbin{\&} (N \Rightarrow N) \rightarrow (N \Rightarrow N)$ and $\alpha' = \pi_2 : (N \Rightarrow N) \mathbin{\&} (N \Rightarrow N) \rightarrow (N \Rightarrow N)$. 
Then, we define the strategy $\mathrm{En}(\mathrm{Id}_{N \Rightarrow N}(\langle \pi_1, \pi_2 \rangle)) : (N \Rightarrow N) \mathbin{\&} (N \Rightarrow N) \rightarrow \mathcal{U}$ to play in either of the following ways illustrated in Figure~\ref{FigExEncodingOfId}, depending on the moves played by Opponent.
\begin{figure}
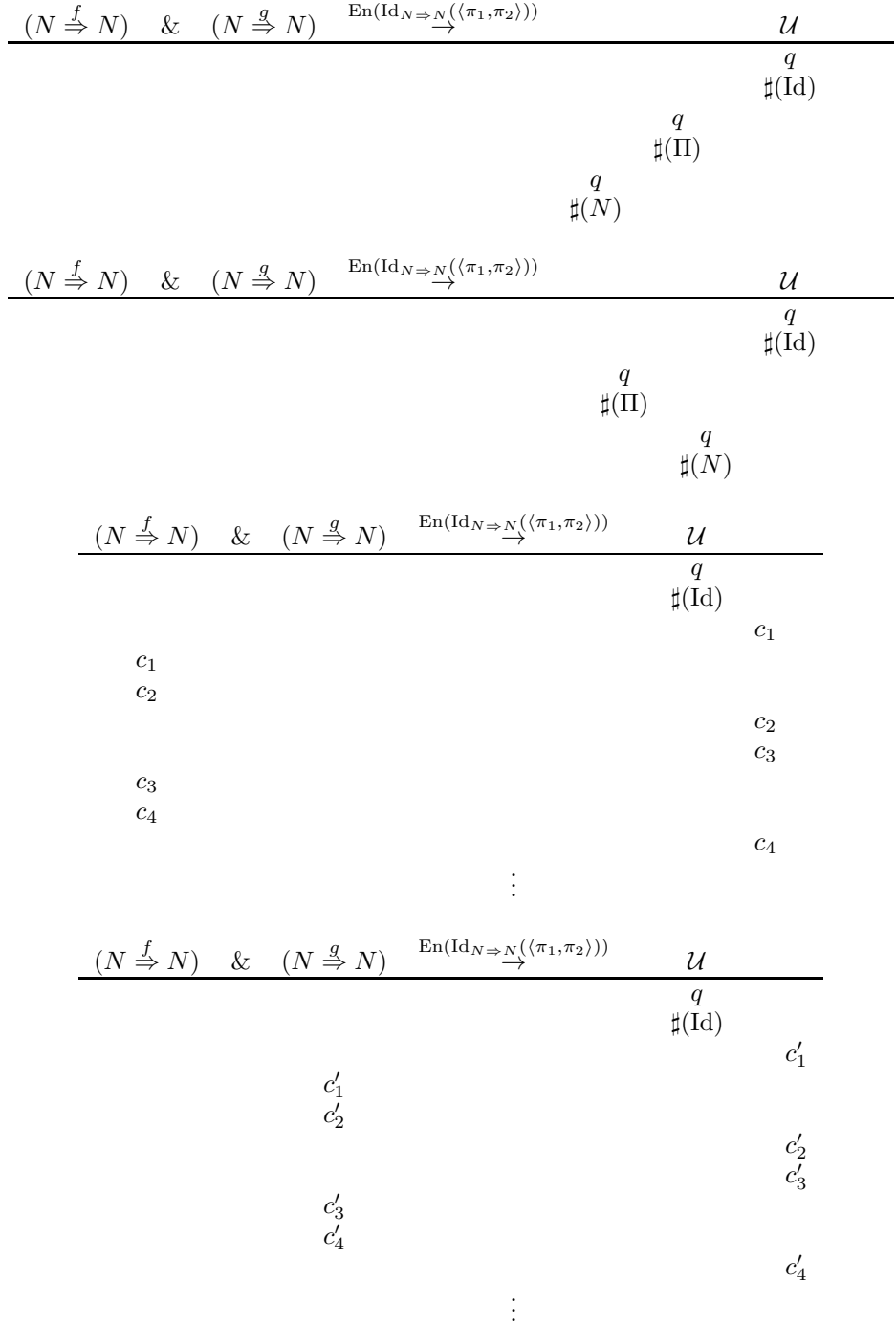

\begin{mathpar}
\begin{tabular}{ccccccccccc}
$(N \stackrel{f}{\Rightarrow} N)$ & $\&$ & $(N \stackrel{g}{\Rightarrow} N)$ & $\stackrel{\mathrm{En}(\mathrm{Id}_{N \Rightarrow N}(\langle \pi_1, \pi_2 \rangle))}{\rightarrow}$ &&& & $\mathcal{U}$ && \\ \hline
&&&&&&& $q$ \\
&&&&&&& $\sharp(\mathrm{Id})$ \\
&&&&& $q$ \\
&&&&& $\sharp(\Pi)$ \\
&&&& $q$ \\
&&&& $\sharp(N)$
\end{tabular}
\and
\begin{tabular}{ccccccccccc}
$(N \stackrel{f}{\Rightarrow} N)$ & $\&$ & $(N \stackrel{g}{\Rightarrow} N)$ & $\stackrel{\mathrm{En}(\mathrm{Id}_{N \Rightarrow N}(\langle \pi_1, \pi_2 \rangle))}{\rightarrow}$ &&& & $\mathcal{U}$ && \\ \hline
&&&&&&& $q$ \\
&&&&&&& $\sharp(\mathrm{Id})$ \\
&&&&& $q$ \\
&&&&& $\sharp(\Pi)$ \\
&&&&&& $q$ \\
&&&&&& $\sharp(N)$
\end{tabular}
\and
\begin{tabular}{ccccccccc}
$(N \stackrel{f}{\Rightarrow} N)$ & $\&$ & $(N \stackrel{g}{\Rightarrow} N)$ & $\stackrel{\mathrm{En}(\mathrm{Id}_{N \Rightarrow N}(\langle \pi_1, \pi_2 \rangle))}{\rightarrow}$ & & $\mathcal{U}$ && \\ \hline
&&&&& $q$ \\
&&&&& $\sharp(\mathrm{Id})$ \\
&&&&&& $c_1$ \\
$c_1$ \\
$c_2$ \\
&&&&&& $c_2$ \\
&&&&&& $c_3$ \\
$c_3$ \\
$c_4$ \\
&&&&&& $c_4$ \\
&&& $\vdots$
\end{tabular}
\and
\begin{tabular}{ccccccccc}
$(N \stackrel{f}{\Rightarrow} N)$ & $\&$ & $(N \stackrel{g}{\Rightarrow} N)$ & $\stackrel{\mathrm{En}(\mathrm{Id}_{N \Rightarrow N}(\langle \pi_1, \pi_2 \rangle))}{\rightarrow}$ & & $\mathcal{U}$ && \\ \hline
&&&&& $q$ \\
&&&&& $\sharp(\mathrm{Id})$ \\
&&&&&&& $c'_1$ \\
&& $c'_1$ \\
&& $c'_2$ \\
&&&&&&& $c'_2$ \\
&&&&&&& $c'_3$ \\
&& $c'_3$ \\
&& $c'_4$ \\
&&&&&&& $c'_4$ \\
&&& $\vdots$
\end{tabular}
\end{mathpar}
\caption{An illustration of the strategy on the encoding of the Id-type between functions}
\label{FigExEncodingOfId}
\end{figure}

In the first two patterns of Figure~\ref{FigExEncodingOfId}, the strategy $\mathrm{En}(\mathrm{Id}_{N \Rightarrow N}(\langle \pi_1, \pi_2 \rangle))$ encodes the underlying family $A = \{ N \Rightarrow N \}$, where recall that function $\Rightarrow$ on games is the trivial class of Pi $\Pi$ on games.
Hence, the family $A$ is encoded simply by the pairing $\langle \mathrm{En}(N), \mathrm{En}(N) \rangle : \mathcal{U} \mathbin{\&} \mathcal{U}$ prefixed by the moves $q . \sharp(\Pi)$.

In the last two patterns of the figure, what the strategy $\mathrm{En}(\mathrm{Id}_{N \Rightarrow N}(\langle \pi_1, \pi_2 \rangle))$ does is essentially to \emph{copy-cat} the input strategies $f$ or $g$ given by Opponent. 
Hence, this strategy is trivially effective, but also its (potentially infinite) plays collectively have the complete information about $f$ and $g$, in particular whether or not $f = g$.
In this way, we overcome the main problem sketched in \S\ref{HowToEncodeGamesByStrategies}.

\subsection{Lifting to the cumulative hierarchy of universes}
The universe $\mathsf{U}$ does not have its own code since otherwise the code $\mathsf{\Gamma \vdash En(U) : U}$ leads to inconsistency known as \emph{Girard's paradox} \cite{girard1972interpretation}.
To address this problem, Martin-L\"{o}f excluded the judgement $\mathsf{\Gamma \vdash En(U) : U}$ and proposed a \emph{cumulative hierarchy} of universes $(\mathsf{U_k})_{k \in \mathbb{N}}$ \cite{martin1975intuitionistic,martin1984intuitionistic}. 
The first universe $\mathsf{U}_0$ does not have its own code $\mathsf{En(U_0)}$, but the second universe $\mathsf{U_1}$ has.
Similarly, the second universe $\mathsf{U}_1$ does not have its own code $\mathsf{En(U_1)}$, but the third universe $\mathsf{U_2}$ has, and so on.
The hierarchy of these universes is \emph{cumulative}: If $i < j$, then the larger universe $\mathsf{U_j}$ has all codes in the smaller one $\mathsf{U_i}$ plus the code $\mathsf{En(U_i)}$.
In this way, the hierarchy collectively encodes \emph{every} type, including the universes themselves, by a code in some universe $\mathrm{U_k}$.
Note that the universe $\mathsf{U}$ is identified with the first universe $\mathsf{U_0}$.

Having established the game $\mathcal{U}$ for the universe $\mathrm{U}$, it is straightforward to interpret the cumulative hierarchy $(\mathsf{U_k})_{k \in \mathbb{N}}$ of universes by a cumulative hierarchy $(\mathcal{U}_k)_{k \in \mathbb{N}}$ of games: For the base case, we define $\mathcal{U}_0 \colonequals \mathcal{U}$; for the inductive step, we define $\mathcal{U}_{k+1}$ by adding the code $\mathrm{En}(\mathcal{U}_i)$ for $i = 0, 1, \dots, k$ to $\mathcal{U}$.


\subsection{Main results}
\label{MainResults}
Based on the idea just sketched, we obtain the following main results of the present work:
\begin{theorem}[computational game semantics of universes]
\label{ThmMainTheorem}
The game semantics of MLTT \cite{yamada2022game} is extendable to the cumulative hierarchy of universes without sacrificing its computability.
\end{theorem}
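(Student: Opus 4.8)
The plan is to reduce the theorem to four tasks: (i) construct the game $\mathcal{U}$ interpreting the universe $\mathsf{U}$; (ii) exhibit, for each dependent type $\mathsf{\Gamma \vdash A\ type}$ whose interpretation is a family $A = (A(\gamma))_{\gamma:\Gamma}$, a \emph{computable} encoding strategy $\mathrm{En}(A) : \Gamma \Rightarrow \mathcal{U}$; (iii) define the decoding family $\mathrm{El} = (\mathrm{El}(\mu))_{\mu:\mathcal{U}}$ and verify the computation rule $\mathrm{El}(\mathrm{En}(A)\bullet\gamma) = A(\gamma)$ for all $\gamma:\Gamma$; and (iv) lift the construction to the hierarchy $(\mathcal{U}_k)_{k\in\mathbb{N}}$ and check that the resulting structure still satisfies every inference rule of MLTT together with the universe rules, while keeping all term-interpreting strategies computable. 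The central subtlety, flagged in \S\ref{Solution}, is that $\mathrm{El}$ is permitted to be uncomputable, so only the strategies in (ii) and the model's term-strategies --- not $\mathrm{El}$ itself --- must be shown effective.

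The first and most delicate step is the definition of $\mathcal{U}$ in (i). Because a position encoding a $\Pi$- or $\Sigma$-type contains a play of the function game $A \Rightarrow \mathcal{U}$, and a position encoding an Id-type contains plays of $A \mathbin{\&} A$, the game $\mathcal{U}$ occurs inside its own positions, forcing a recursive definition. I would make this precise by a rank stratification: define an increasing chain $\mathcal{U}^{(0)} \subseteq \mathcal{U}^{(1)} \subseteq \cdots$ of games, where $\mathcal{U}^{(0)}$ carries only the base positions $q.\sharp(1)$, $q.\sharp(0)$, $q.\sharp(N)$, and $\mathcal{U}^{(n+1)}$ adjoins the positions $q.\sharp(\Pi).s$, $q.\sharp(\Sigma).s$ and $q.\sharp(\mathrm{Id}).s$ built from plays $s$ of the games obtained by substituting $\mathcal{U}^{(n)}$ for the occurrences of $\mathcal{U}$. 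Setting $\mathcal{U} \colonequals \bigcup_{n\in\mathbb{N}} \mathcal{U}^{(n)}$ yields the least fixed point of the evident monotone operator on games; this is legitimate because every code uses only finitely many nested constructors, so each position lies in some $\mathcal{U}^{(n)}$. The verification that $\mathcal{U}$ is a well-defined game (satisfying the forest and parity conditions of \cite{yamada2022game}) then proceeds by induction on rank.

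Step (ii) is an induction on the formation of $\mathsf{\Gamma \vdash A\ type}$. The base cases set $\mathrm{En}(1),\mathrm{En}(0),\mathrm{En}(N)$ to play $q \mapsto \sharp(1),\sharp(0),\sharp(N)$; the inductive cases define $\mathrm{En}(\Pi(A,B))$ and $\mathrm{En}(\Sigma(A,B))$ as the pairing $\langle \mathrm{En}(A), \mathrm{En}(B)\rangle$ prefixed by $q.\sharp(\Pi)$ resp.\ $q.\sharp(\Sigma)$, exactly as in \S\ref{Solution}; and the Id case defines $\mathrm{En}(\mathrm{Id}_A(\langle\alpha,\alpha'\rangle))$ as $\langle \mathrm{En}(A), \langle\alpha,\alpha'\rangle\rangle$ prefixed by $q.\sharp(\mathrm{Id})$. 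Each such strategy is a prefixing, pairing, or copy-cat of strategies already known to be computable, so computability is preserved at every step --- this is the payoff of the copy-cat idea, since the Id-encoding never decides $\alpha = \alpha'$. The decoding $\mathrm{El}(\mu)$ in (iii) reads the first answer of $\mu$ to select the top constructor and then recursively decodes the component sub-strategies; in the Id case it recovers the two strategies $\alpha,\alpha'$ from the copy-catted plays and returns $\mathrm{Id}_A(\langle\alpha,\alpha'\rangle)$. The computation rule $\mathrm{El}(\mathrm{En}(A)\bullet\gamma) = A(\gamma)$ is then a routine induction matching the encode and decode clauses.

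Granting (i)--(iii), the model structure of (iv) is largely inherited: the category of games and strategies and the interpretation of contexts, substitutions and the non-universe type formers are unchanged from \cite{yamada2022game}, so only the universe-specific rules (formation by $\mathcal{U}$, introduction by $\mathrm{En}$, elimination by $\mathrm{El}$, and the computation rule) need checking, together with the cumulative lift $\mathcal{U}_0 \colonequals \mathcal{U}$, $\mathcal{U}_{k+1} \colonequals \mathcal{U} \text{ enriched by the codes } \mathrm{En}(\mathcal{U}_i)\ (i\le k)$ and the evident code inclusions realising cumulativity. I expect the main obstacle to be step (i): making the recursive definition of $\mathcal{U}$ genuinely well-founded while simultaneously ensuring that the encoding and decoding are mutually inverse at each rank, so that the computation rule holds \emph{on the nose} rather than merely up to isomorphism. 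The accompanying conceptual check --- that uncomputability of $\mathrm{El}$ never leaks into any term-interpreting strategy, because Pi-type dependency is revealed only gradually (\S\ref{Solution}) --- is where the delicate interaction between the fixed-point construction and the computability invariant must be managed.
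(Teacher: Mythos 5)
Your overall route coincides with the paper's: a rank-stratified recursive construction of the universe game, encodings $\mathrm{En}$ defined by induction on type formation (symbolic answers for the base types, prefixed pairings of codes for $\Pi$ and $\Sigma$, a prefixed pairing of a code with the copy-catted pair of terms for $\mathrm{Id}$), the decoding $\mathrm{El}$ as a union over ranks, the computation rule $\mathrm{El}(\mathrm{En}(A)) = A$ by induction matching the encode and decode clauses, the cumulative hierarchy obtained by adjoining codes for the smaller universes, and computability demanded only of $\mathrm{En}$ and the term-interpreting strategies while $\mathrm{El}$ may be uncomputable. This is exactly Definition~\ref{DefUniversePredicativeGames}, Theorem~\ref{ThmGameSemanticsOfUniverses} and the effectivity corollary of \S\ref{Effectivity}; your decision to define $\mathrm{En}$ by induction on the \emph{syntactic} formation of $\mathsf{\Gamma \vdash A \ type}$ plays the same role as the paper's restriction to the subCwF $\mathbb{UPG}_\oc$ whose types are freely generated by the type constructions, since the full CwF of \cite{yamada2022game} contains types that admit no code at all.

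There is, however, a genuine gap in your step (i), and it sits precisely where the paper locates its main technical contribution. Your stratification clause --- adjoin the positions $q.\sharp(\Pi).s$ ``built from plays $s$ of the games obtained by substituting $\mathcal{U}^{(n)}$ for the occurrences of $\mathcal{U}$'' --- is not well-defined, because the ambient game of $s$ is not of the form $F(\mathcal{U})$ for any fixed game expression $F$: after $q.\sharp(\Pi)$ the play continues in $\mathcal{U} \mathbin{\&} (A \Rightarrow \mathcal{U})$ where the domain $A$ is the game \emph{decoded from the code played in the first component}, and likewise the $\sharp(\mathrm{Id})$ positions continue in $\mathcal{U} \mathbin{\&} (A \mathbin{\&} A)$ with $A = \mathrm{El}(\mu)$. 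Hence the rank-$(n{+}1)$ game cannot be written down from $\mathcal{U}^{(n)}$ alone; it needs the rank-$n$ decoding function. Your plan defers the definition of $\mathrm{El}$ to step (iii), after $\mathcal{U}$ has been fully built, so as stated (i) presupposes (iii) and (iii) presupposes (i). The paper breaks this circularity by a \emph{simultaneous} recursion: at each rank it constructs the pair $(\mathcal{U}_k^{(i)}, \mathrm{El}_k^{(i)})$, lifts the function $\mathrm{El}_k^{(i)}$ to a dependent p-game over $\mathcal{U}_k^{(i)}$, and only then forms the rank-$(i{+}1)$ positions as plays in the dependent p-games $\Sigma(\mathcal{U}_k^{(i)}, \mathrm{El}^{(i)}_k \Rightarrow \mathcal{U}_k^{(i)})$ and $\Sigma(\mathcal{U}_k^{(i)}, \mathrm{El}^{(i)}_k \mathbin{\&} \mathrm{El}^{(i)}_k)$ (Definition~\ref{DefUniversePredicativeGames}, \S\ref{UniversePredicativeGames}). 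You do anticipate a ``delicate interaction'' at this point, but the missing device is concrete: the decoding must be constructed rank-by-rank \emph{together with} the games, using the dependent p-game machinery of \cite{yamada2022game}, rather than read off afterwards from the completed game.
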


This theorem in turn extends the independence proof of the previous work \cite{yamada2022game}:
\begin{corollary}[independence of Markov's principle]
\label{CorMainCorollary}
Markov's principle is independent from MLTT equipped with the cumulative hierarchy of universes.
\end{corollary}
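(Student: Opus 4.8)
The plan is to transport the independence argument of \cite{yamada2022game} along the extension furnished by Theorem~\ref{ThmMainTheorem}. Independence of Markov's principle $\mathsf{MP}$ from the theory $\mathcal{T} \colonequals \text{MLTT} + (\mathsf{U}_k)_{k \in \mathbb{N}}$ means that neither $\mathsf{MP}$ nor $\neg \mathsf{MP}$ is derivable in $\mathcal{T}$, and I would establish the two halves by exhibiting one model of $\mathcal{T}$ that refutes $\mathsf{MP}$ and one that validates it. The crucial new ingredient is that, by Theorem~\ref{ThmMainTheorem}, the game semantics of \cite{yamada2022game} interprets the entire cumulative hierarchy $(\mathsf{U}_k)_{k \in \mathbb{N}}$ while remaining \emph{computable}, i.e.\ every closed term of $\mathcal{T}$ denotes an effective strategy.

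First I would treat non-derivability of $\mathsf{MP}$. Fixing the computational game model $\mathcal{M}$ of $\mathcal{T}$ from Theorem~\ref{ThmMainTheorem}, I would show that the instance of $\mathsf{MP}$ refuted in \cite{yamada2022game} remains refuted in $\mathcal{M}$. Consider the standard formulation
\[
\mathsf{MP} \colonequals \Pi_{f : \mathsf{N} \Rightarrow \mathsf{Bool}}\bigl( \neg\neg\, \Sigma_{n : \mathsf{N}}\, \mathsf{Id}_{\mathsf{Bool}}(f(n), \mathsf{true}) \to \Sigma_{n : \mathsf{N}}\, \mathsf{Id}_{\mathsf{Bool}}(f(n), \mathsf{true}) \bigr),
\]
with $\neg \mathsf{X}$ abbreviating $\mathsf{X} \to \mathsf{0}$ for the Zero-type. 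A closed proof of $\mathsf{MP}$ would be interpreted by a strategy which, by the computability clause of Theorem~\ref{ThmMainTheorem}, is effective; its algorithm would then have to extract, from only finitely many input-output pairs of an arbitrary input $f$, a numeral witnessing $\Sigma_{n : \mathsf{N}}\, \mathsf{Id}_{\mathsf{Bool}}(f(n), \mathsf{true})$. This is impossible by the undecidability results of \cite{rogers1967theory}, via exactly the same finitary-information obstruction that blocks $\mathrm{En}(\mathrm{Id}_{N \Rightarrow N})$ from deciding $f = g$ in \S\ref{HowToEncodeGamesByStrategies}. Since $\mathcal{M}$ is a sound model of $\mathcal{T}$, no derivation of $\mathsf{MP}$ exists.

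For non-refutability of $\mathsf{MP}$, equivalently the consistency of $\mathcal{T} + \mathsf{MP}$, I would invoke a classical, extensional interpretation of $\mathcal{T}$ in which $\mathsf{MP}$ is valid---for instance a set-theoretic model of MLTT with a genuine cumulative hierarchy of (Grothendieck-)universes, where existential types are honest set-theoretic disjoint unions and $\mathsf{MP}$ holds by classical logic. As this model soundly interprets all of $\mathcal{T}$ and validates $\mathsf{MP}$, the judgement $\neg \mathsf{MP}$ cannot be derivable in $\mathcal{T}$. Combined with the previous paragraph, this yields independence.

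The main obstacle is the first half. I would have to confirm that enlarging the model by the universe games $(\mathcal{U}_k)_{k \in \mathbb{N}}$ and the associated strategies $\mathrm{En}$ and $\mathrm{El}$ introduces no new strategy that validates the refuted instance of $\mathsf{MP}$. The delicate point is that these games are built from the copy-cat mechanism of \S\ref{Solution} with an intentionally \emph{uncomputable} decoding family $\mathrm{El}$; I must therefore verify that the effectivity asserted by Theorem~\ref{ThmMainTheorem} is precisely the effectivity of the strategies denoting \emph{terms}, and that it is stable under the composition and promotion used to interpret applications and $\Pi$-abstractions, so that the recursion-theoretic contradiction applies uniformly throughout the extended model rather than being circumvented by some term that secretly consults $\mathrm{El}$.
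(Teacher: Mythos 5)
There is a genuine gap in your first half: you identify the wrong mechanism for why no strategy validates $\mathsf{MP}$. You derive the impossibility from the \emph{effectivity} of the interpreting strategies ("by the computability clause of Theorem~\ref{ThmMainTheorem}, is effective") plus recursion-theoretic undecidability \cite{rogers1967theory}. But effectivity cannot be what refutes $\mathsf{MP}$: an effective strategy is perfectly capable of performing Kleene-style \emph{unbounded search} (query $f(0), f(1), f(2), \dots$ until a zero is found), and it is exactly this kind of effective realizer that \emph{validates} Markov's principle in number realizability. Undecidability of the halting problem does not forbid adaptive, unbounded querying; it only forbids deciding in advance. What actually kills $\mathsf{MP}$ in the game model is the \emph{winning} conditions on strategies --- totality, innocence and, crucially, noetherianity (Definition~\ref{DefConstraintsOnStrategies}) --- which force a strategy to commit to an answer after finitely many interactions with $f$, uniformly over all Opponent behaviours. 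Your premise "the algorithm would have to extract a witness from only finitely many input-output pairs" is precisely the statement that needs proof, and it follows from noetherianity, not from effectivity; as written, your argument has a hole at its central step. Consistently with this, the paper's independence result does not invoke computability at all: it holds already in the non-effective CwF $\mathbb{UPG}_\oc$, and the effectivity of the semantics is a separate corollary (\S\ref{Effectivity}), orthogonal to \S\ref{MP}.

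The paper's own proof is also structurally much lighter than your proposal. It observes (\S\ref{MP}) that the game interpreting $\mathsf{MP}$ is built only from Zero-, N-, Pi-, Sigma- and Id-types, whose interpretations are untouched by the addition of the universe p-games, so the refutation argument of \cite{yamada2022game} applies \emph{verbatim}; adding new types to the model cannot create new winning strategies on an unchanged game. This also disposes of your final worry about terms that "secretly consult $\mathrm{El}$": $\mathrm{El}$ is a meta-level decoding function on strategies, not a move or an oracle inside any game, so no term can interact with it. Finally, your second half (non-derivability of $\neg\mathsf{MP}$ via a classical set-theoretic model with Grothendieck universes) is a legitimate route, but it is different from and meta-theoretically heavier than the paper's, which stays entirely inside the game-semantic framework inherited from \cite{yamada2022game}; with the corrected first half, the set-theoretic detour is unnecessary.
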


This corollary illustrates a strong advantage of game semantics: The combinatorial reasoning of game semantics such as the independence proof remains valid \emph{even when game semantics is extended to new types}.
Hence, when the game semantics of MLTT has been extended to other types, the meta-theoretic results on MLTT shown by the game semantics will be automatically extended to those types as well. 

This advantage makes game semantics a quite powerful tool for the study of MLTT.
In contrast, the syntactic proof given by Coquand and Manna \cite{mannaa2017independence}, for instance, does not have such a modular property because an extension of MLTT may invalidate their syntactic, inductive reasoning.

\subsection{Our contributions and related work}
Our main contribution is the first game semantics of universes (Theorem~\ref{ThmMainTheorem}) in the 30 years history of game semantics.
The main challenge in achieving game semantics of universes is how to encode games by strategies, especially games that interpret Id-types (\S\ref{HowToEncodeGamesByStrategies}).
We solve this problem by the novel idea to encode games by strategies that consist of both symbolic and non-symbolic computations (\S\ref{Solution}), while we allow the decoding function $\mathrm{El}$ to be uncomputable (without sacrificing the effective nature of the game semantics of MLTT \cite{yamada2022game}).
This idea in turn requires a nontrivial recursive definition of games for interpreting universes, and our main technical contribution is to establish such a definition. 

Another contribution is to show the independence of Makov's principle from MLTT equipped with the cumulative hierarchy of universes (Corollary~\ref{CorMainCorollary}). 
This result demonstrates the modular property of the game-semantic reasoning: A meta-theoretic result on MLTT given by the game semantics of MLTT is automatically extended to new types as soon as the game semantics is extended to the types. 

Abramsky et al. \cite{abramsky2015games} establishes the first intensional semantics of a fragment of MLTT.
However, they interpret Sigma-types indirectly by a list construction, not by games, which makes an interpretation of universes hopeless. 
Besides, their method is valid only for a specific class of types \cite[Figure~7]{vakar2018game}, which excludes, e.g., the list type.
Because the list type is constructible by the elimination rule of N-type with respect to universes, this limitation also implies that their approach cannot interpret universes. 

Blot and Laird \cite[Table~3]{blot2018extensional} also interpret a universe, for which they write $\mathsf{\Gamma \vdash_{\mathcal{E}} \mathcal{I} \ type}$, but their interpretation is by domain theory \cite{gierz2003continuous}, not game semantics. 
Besides, they do not interpret Id-types and instead sketch how to interpret Id-types by finite tuples of Boolean-type \cite[\S 9]{blot2018extensional}; however, this method does not work in the presence of N-type since the set $\mathbb{N}$ of all natural numbers is unbounded.

Finally, Coquand and Manna \cite{mannaa2017independence} show the independence of Markov's principle from MLTT equipped with a single universe for the first time in the literature.
Their independence proof is syntactic, which stands in contrast to our game-semantic proof.
As we have mentioned in \S\ref{MainResults}, their syntactic proof is not straightforward to extend to other types, while our game-semantic proof is.

\subsection{The structure of the present article}
The rest of the present article proceeds as follows.
We first prepare for the interpretation of universes by recalling the game semantics of MLTT \cite{yamada2022game} in \S\ref{Review}.
We then proceed to our main contribution in \S\ref{GameSemanticsOfUniverses}: game semantics of the cumulative hierarchy of universes. 
We next present immediate corollaries of this result in \S\ref{Corollaries}, including the independence of Markov's principle from MLTT equipped with the hierarchy of universes. 
We finally draw a conclusion and propose some future work in \S\ref{ConclusionAndFutureWork}.

\begin{notation}
We use the following notations:
\begin{itemize}

\item We use bold small letters $\boldsymbol{s}, \boldsymbol{t}, \boldsymbol{u}, \boldsymbol{v}$, etc. for sequences, in particular $\boldsymbol{\epsilon}$ for the \emph{empty sequence}, and small letters $a, b, m, n, x, y$, etc. for elements of sequences;

\item We define $\overline{n} \colonequals \{ 1, 2, \dots, n \}$ for each $n \in \mathbb{N}_+ \colonequals \mathbb{N} \setminus \{ 0 \}$, and $\overline{0} \colonequals \emptyset$;

\item We write $x_1 x_2 \dots x_{|\boldsymbol{s}|}$ for a sequence $\boldsymbol{s} = (x_1, x_2, \dots, x_{|\boldsymbol{s}|})$, where $|\boldsymbol{s}|$ is the \emph{length} of $\boldsymbol{s}$, define $\boldsymbol{s}(i) \colonequals x_i$ ($i \in \overline{|\boldsymbol{s}|}$) and write $a \in \boldsymbol{s}$ if $a = \boldsymbol{s}(j)$ for some $j \in \overline{|\boldsymbol{s}|}$;


\item A \emph{concatenation} of sequences $\boldsymbol{s}$ and $\boldsymbol{t}$ is represented by their juxtaposition $\boldsymbol{s}\boldsymbol{t}$ (or $\boldsymbol{s} . \boldsymbol{t}$), but we often write $a \boldsymbol{s}$, $\boldsymbol{t} b$, $\boldsymbol{u} c \boldsymbol{v}$ for $(a) \boldsymbol{s}$, $\boldsymbol{t} (b)$, $\boldsymbol{u} (c) \boldsymbol{v}$, and so on;


\item We write $\mathrm{Even}(\boldsymbol{s})$ (resp. $\mathrm{Odd}(\boldsymbol{s})$) if $\boldsymbol{s}$ is of even- (resp. odd-) length, and given a set $S$ of sequences and $\mathrm{P} \in \{ \mathrm{Even}, \mathrm{Odd} \}$, we define $S^\mathrm{P} \colonequals \{ \, \boldsymbol{s} \in S \mid \mathrm{P}(\boldsymbol{s}) \, \}$;

\item We write $\boldsymbol{s} \preceq \boldsymbol{t}$ if $\boldsymbol{s}$ is a \emph{prefix} of a sequence $\boldsymbol{t}$, and given a set $S$ of sequences, $\mathrm{Pref}(S)$ for the set of all prefixes of sequences in $S$, i.e., $\mathrm{Pref}(S) \colonequals \{ \, \boldsymbol{s} \mid \exists \boldsymbol{t} \in S . \, \boldsymbol{s} \preceq \boldsymbol{t} \, \}$.







\end{itemize}
\end{notation}

\section{Review: game semantics of Martin-L\"{o}f type theory}
\label{Review}
In this section, we recall the game semantics of MLTT given in the previous work \cite{yamada2022game}.
To this end, we first recall games and strategies \`{a} la McCusker  \cite{mccusker1998games} (with the slight modifications made by the previous work \cite{yamada2022game}) that interpret simple type theories \cite{abramsky1999game} in \S\ref{GamesAndStrategies} since the previous work is based on this variant of games and strategies.
We then review basic definitions and results of the game semantics of MLTT \cite{yamada2022game} in \S\ref{GameSemanticsOfMLTT}.
Our exposition is minimal; see the tutorial \cite{abramsky1999game} and the previous work \cite{yamada2022game} for more explanations and examples.

\subsection{Games and strategies}
\label{GamesAndStrategies}
\emph{Games} are a class of directed rooted forests. 
For technical convenience, we identify games with the sets of all paths from the roots, called \emph{positions}.
The vertices of games are called \emph{moves}, and positions must be \emph{legal}.
These concepts are centred around the structure of \emph{arenas}.

\begin{definition}[moves \cite{yamada2022game}]
\label{DefMoves}
Let us fix arbitrary pairwise distinct symbols $\mathrm{O}$, $\mathrm{P}$, $\mathrm{Q}$ and $\mathrm{A}$, and call them \emph{labels}.
A \emph{move} is a triple $m^{xy} \colonequals (m, x, y)$ such that $x \in \{ \mathrm{O}, \mathrm{P} \}$ and $y \in \{ \mathrm{Q}, \mathrm{A} \}$. 
We abbreviate moves $m^{xy}$ as $m$ and instead define $\lambda(m) \colonequals xy$, $\lambda^\mathrm{OP}(m) \colonequals x$ and $\lambda^\mathrm{QA}(m) \colonequals y$.
\end{definition}

We call a move $m$ an \emph{O-move} if $\lambda^\mathrm{OP}(m) = \mathrm{O}$, a \emph{P-move} if $\lambda^\mathrm{OP}(m) = \mathrm{P}$, a \emph{question} if $\lambda^\mathrm{QA}(m) = \mathrm{Q}$, and an \emph{answer} if $\lambda^\mathrm{QA}(m) = \mathrm{A}$.

\begin{definition}[arenas \cite{mccusker1998games,yamada2022game,hyland2000full}]
\label{DefArenas}
An \emph{arena} is a pair $G = (M_G, \vdash_G)$ of
\begin{itemize}
\item A set $M_G$ of moves;

\item A subset $\vdash_G$ of the cartesian product $(\{ \star \} \cup M_G) \times M_G$, where $\star$ (also written $\star_G$) is an arbitrarily fixed element such that $\star \not \in M_G$, called the \emph{enabling relation}, that satisfies
\begin{itemize}

\item \textsc{(E1)} If $\star \vdash_G m$, then $\lambda (m) = \mathrm{OQ}$;

\item \textsc{(E2)} If $m \vdash_G n$ and $\lambda^\mathrm{QA} (n) = \mathrm{A}$, then $\lambda^\mathrm{QA} (m) = \mathrm{Q}$;

\item \textsc{(E3)} If $m \vdash_G n$ and $m \neq \star$, then $\lambda^\mathrm{OP} (m) \neq \lambda^\mathrm{OP} (n)$.

\end{itemize}

\end{itemize}
\end{definition}

We call moves $m \in M_G$ \emph{initial} if $\star \vdash_G m$, and define the set $M_G^{\mathrm{Init}} \colonequals \{ \, m \in M_G \mid \star \vdash_G m \, \}$ of all initial moves of $G$.
An arena $G$ is \emph{well-founded} if the relation $\vdash_G$ is well-founded, i.e., there is no sequence $(m_i)_{i \in \mathbb{N}}$ of moves $m_i \in M_G$ such that $\star \vdash_G m_0$ and $m_i \vdash_G m_{i+1}$ for all $i \in \mathbb{N}$.

Strictly speaking, positions of games are sequences of moves equipped with \emph{pointers}:
\begin{definition}[j-sequences \cite{hyland2000full,coquand1995semantics,mccusker1998games}]
\label{DefJSequences}
An \emph{occurrence} in a finite sequence $\boldsymbol{s}$ is a pair $(\boldsymbol{s}(i), i)$ such that $i \in \overline{|\boldsymbol{s}|}$. 
A \emph{justified (j-) sequence} is a pair $\boldsymbol{s} = (\boldsymbol{s}, \mathcal{J}_{\boldsymbol{s}})$ of a finite sequence $\boldsymbol{s}$ of moves and a map $\mathcal{J}_{\boldsymbol{s}} : \overline{|\boldsymbol{s}|} \rightarrow \{ 0 \} \cup \overline{|\boldsymbol{s}|-1}$ such that $0 \leqslant \mathcal{J}_{\boldsymbol{s}}(i) < i$ for all $i \in \overline{|\boldsymbol{s}|}$, called the \emph{pointer} of the j-sequence. 
An occurrence $(\boldsymbol{s}(i), i)$ is \emph{initial} in $\boldsymbol{s}$ if $\mathcal{J}_{\boldsymbol{s}}(i) = 0$.
\end{definition}

We say that the occurrence $(\boldsymbol{s}({\mathcal{J}_{\boldsymbol{s}}(i)}), \mathcal{J}_{\boldsymbol{s}}(i))$ is the \emph{justifier} of a non-initial one $(\boldsymbol{s}(i), i)$ in $\boldsymbol{s}$, and $(\boldsymbol{s}(i), i)$ is \emph{justified} by $(\boldsymbol{s}({\mathcal{J}_{\boldsymbol{s}}(i)}), \mathcal{J}_{\boldsymbol{s}}(i))$ in $\boldsymbol{s}$. 
A j-sequence $\boldsymbol{s}$ is \emph{in an arena} $G$ if its elements are moves of $G$, and its pointer respects the relation $\vdash_G$ in $G$, i.e., $\forall i \in \overline{|\boldsymbol{s}|} . \, \big(\mathcal{J}_{\boldsymbol{s}}(i) = 0 \Rightarrow \star \vdash_G \boldsymbol{s}(i)\big) \wedge \big(\mathcal{J}_{\boldsymbol{s}}(i) \neq 0 \Rightarrow \boldsymbol{s}({\mathcal{J}_{\boldsymbol{s}}(i)}) \vdash_G \boldsymbol{s}(i)\big)$.
We write $\mathscr{J}_G$ for the set of all j-sequences in $G$.
A \emph{justified (j-) subsequence} of a j-sequence $\boldsymbol{s}$ is a j-sequence $\boldsymbol{t}$, written $\boldsymbol{t} \sqsubseteq \boldsymbol{s}$, such that $\boldsymbol{t}$ is a subsequence of $\boldsymbol{s}$, and $\mathcal{J}_{\boldsymbol{t}}(i) = j$ if and only if $\mathcal{J}_{\boldsymbol{s}}^n(i) = j$ for some $n \in \mathbb{N}_+$ with the occurrences $(\boldsymbol{s}(\mathcal{J}_{\boldsymbol{s}}^k(i)), \mathcal{J}_{\boldsymbol{s}}^k(i))$ for $k = 1, 2, \dots, n-1$ deleted in $\boldsymbol{t}$.

\begin{convention}
We are henceforth casual about the distinction between moves and occurrences, and by abuse of notation, we frequently keep the pointer $\mathcal{J}_{\boldsymbol{s}}$ of each j-sequence $\boldsymbol{s} = (\boldsymbol{s}, \mathcal{J}_{\boldsymbol{s}})$ implicit since it is mostly obvious, and abbreviate occurrences $(\boldsymbol{s}(i), i)$ in $\boldsymbol{s}$ as $\boldsymbol{s}(i)$.
We write $\mathcal{J}_{\boldsymbol{s}}(\boldsymbol{s}(i)) = \boldsymbol{s}(j)$ if $\mathcal{J}_{\boldsymbol{s}}(i) = j > 0$.
\end{convention}

\begin{definition}[views \cite{coquand1995semantics,hyland2000full,mccusker1998games}] 
\label{DefViews}
The \emph{P-view} $\lceil \boldsymbol{s} \rceil$ and the \emph{O-view} $\lfloor \boldsymbol{s} \rfloor$ of a j-sequence $\boldsymbol{s}$ are the j-subsequences of $\boldsymbol{s}$ defined by the induction 
\begin{itemize}

\item $\lceil \boldsymbol{\epsilon} \rceil \colonequals \boldsymbol{\epsilon}$; 

\item $\lceil \boldsymbol{s} m \rceil \colonequals \lceil \boldsymbol{s} \rceil . m$ if $m$ is a P-move; 

\item $\lceil \boldsymbol{s} m \rceil \colonequals m$ if $m$ is initial;

\item $\lceil \boldsymbol{s} m \boldsymbol{t} n \rceil \colonequals \lceil \boldsymbol{s} \rceil . m n$ if $n$ is an O-move such that $m$ justifies $n$; 

\item $\lfloor \boldsymbol{\epsilon} \rfloor \colonequals \boldsymbol{\epsilon}$;

\item $\lfloor \boldsymbol{s} m \rfloor \colonequals \lfloor \boldsymbol{s} \rfloor . m$ if $m$ is an O-move; 

\item $\lfloor \boldsymbol{s} m \boldsymbol{t} n \rfloor \colonequals \lfloor \boldsymbol{s} \rfloor . m n$ if $n$ is a P-move such that $m$ justifies $n$. 

\end{itemize}
\end{definition}

\begin{definition}[legal positions \cite{abramsky1999game,mccusker1998games}]
\label{DefLegalPositions}
A \emph{legal position} is a j-sequence $\boldsymbol{s}$ such~that
\begin{itemize}

\item \textsc{(Alternation)} If $\boldsymbol{s} = \boldsymbol{s_1} m n \boldsymbol{s_2}$, then $\lambda^\mathrm{OP} (m) \neq \lambda^\mathrm{OP} (n)$;


\item \textsc{(Visibility)} If $\boldsymbol{s} = \boldsymbol{t} m \boldsymbol{u}$ with $m$ non-initial, then $\mathcal{J}_{\boldsymbol{s}}(m)$ occurs in the P-view $\lceil \boldsymbol{t} \rceil$ if $m$ is a P-move, and in the O-view $\lfloor \boldsymbol{t} \rfloor$ otherwise.

\end{itemize}
\end{definition}

A legal position is \emph{in an arena} $G$ if it is a j-sequence in $G$ (Definition~\ref{DefJSequences}).
We write $\mathscr{L}_G$ for the set of all legal positions in $G$.

\begin{definition}[games \cite{mccusker1998games,abramsky1999game,yamada2022game}]
\label{DefGames}
A \emph{game} is a set $G$ of legal positions such that
\begin{enumerate}

\item $G$ is nonempty and \emph{prefix-closed} (i.e., $\boldsymbol{s}m \in G \Rightarrow \boldsymbol{s} \in G$);

\item $\mathrm{Arn}(G) \colonequals (M_G, \vdash_G)$ is an arena, where $M_G \colonequals \{ \, \boldsymbol{s}(i) \mid \boldsymbol{s} \in G, i \in \overline{|\boldsymbol{s}|} \, \}$ and $\vdash_G \, \colonequals \{ \, (\star, \boldsymbol{s}(j)) \mid \boldsymbol{s} \in G, \mathcal{J}_{\boldsymbol{s}}(j) = 0 \, \}\cup \{ \, (\boldsymbol{s}(i), \boldsymbol{s}(j)) \mid \boldsymbol{s} \in S, \mathcal{J}_{\boldsymbol{s}}(j)= i > 0 \, \}$.

\end{enumerate}
\end{definition}

A game $G$ is \emph{well-founded} if so is the arena $\mathrm{Arn}(G)$, and \emph{well-opened} if each of its elements has at most one initial occurrence (i.e., the conjunction of $\boldsymbol{s}m \in G$ and $m \in M_G^{\mathrm{Init}}$ implies $\boldsymbol{s} = \boldsymbol{\epsilon}$).
We call elements of $G$ \emph{(valid) positions} in $G$.
A \emph{subgame} of $G$ is a game $H \subseteq G$, and $\mathrm{sub}(G) \colonequals \{ \, H \mid \text{$H$ is a subgame of $G$} \, \}$.

\begin{example}
\label{ExamplesOfGames}
The simplest game is the \emph{terminal game} $T \colonequals \{ \boldsymbol{\epsilon} \}$ which only has the trivial position $\boldsymbol{\epsilon}$.
The \emph{flat game} on a given set $S$ is the game $\mathrm{flat}(S) \colonequals \mathrm{Pref}(\{ \, q^{\mathrm{OQ}} . m^{\mathrm{PA}} \mid m \in S \, \})$, where $q$ is an arbitrarily fixed element such that $q \not\in S$, and $q^{\mathrm{OQ}}$ justifies $m^{\mathrm{PA}}$. 
Consider, for instance, the \emph{empty game} $0 \colonequals \mathrm{flat}(\emptyset)$ and the \emph{natural number game} $N \colonequals \mathrm{flat}(\mathbb{N})$.
As the notation indicates, the empty game $0$ interprets Zero-type, and the natural number game $N$ interprets N-type (\S\ref{GameSemanticsOfMLTT}).
\end{example}

On the other hand, \emph{strategies} on a game $G$ are algorithms for Player about how to play on $G$:
\begin{definition}[strategies \cite{mccusker1998games}]
\label{DefStrategies}
A \emph{strategy} on a game $G$ is a subset $\sigma \subseteq G^{\mathrm{Even}}$, written $\sigma : G$, that is nonempty, \emph{even-prefix-closed} (i.e., $\boldsymbol{s}mn \in \sigma \Rightarrow \boldsymbol{s} \in \sigma$) and \emph{deterministic} (i.e., $\boldsymbol{s}mn, \boldsymbol{s}mn' \in \sigma \Rightarrow \boldsymbol{s}mn = \boldsymbol{s}mn'$).
We write $\mathrm{st}(G)$ for the set $\{ \, \sigma \mid \sigma : G \, \}$ of all strategies on $G$.
\end{definition}

The idea is that a strategy $\sigma : G$ describes for Player how to play on the game $G$ by the computation $\boldsymbol{s}m \in G^{\mathrm{Odd}} \mapsto \boldsymbol{s}mn \in \sigma$ (n.b., $m$ is an O-move, and $n$ is a P-move), if any, which is \emph{deterministic} by the determinacy of $\sigma$, and in general \emph{partial} since there can be no output $\boldsymbol{s}mn \in \sigma$.

\begin{example}
\label{ExamplesOfStrategies}
The terminal game $T$ only has the trivial strategy $\top \colonequals \{ \boldsymbol{\epsilon} \}$, and the flat game $\mathrm{flat}(S)$ on a given set $S$ has strategies $\bot \colonequals \{ \boldsymbol{\epsilon} \}$ and $\underline{m} \colonequals \{ \boldsymbol{\epsilon}, q m \}$ for each $m \in S$. 
\if0
The \emph{\bfseries (lazy) natural number game} $N \colonequals \mathrm{Pref}(\{ \, (q^{\mathrm{OQ}} \mathrm{yes}^{\mathrm{PA}})^n q^{\mathrm{OQ}} \mathrm{no}^{\mathrm{PA}} \mid n \in \mathbb{N} \, \})$, where $q$ justifies $\mathrm{yes}$ and $\mathrm{no}$, has a strategy $\underline{n} \colonequals \mathrm{Pref}(\{ (q^{\mathrm{OQ}} \mathrm{yes}^{\mathrm{PA}})^n q^{\mathrm{OQ}} \mathrm{no}^{\mathrm{PA}} \})$, which represents $n \in \mathbb{N}$ by answering O's question $q$ (`Will you count one more?') by P's answer $\mathrm{yes}$ (`Yes, I will!') $n$-times and then by P's answer $\mathrm{no}$ (`No, I won't!').
In other words, the strategy $\underline{n} : N$ reformulates $n$ as the $n$-times counting process. 
\fi
\end{example}

Strategies are \emph{unrestricted} computations, e.g., they can be \emph{partial}, some of which do not correspond to proofs in logic or formal systems.
This motivates \emph{winning} and \emph{well-bracketing} on strategies: Winning strategies correspond to proofs in classical logic, and winning, well-bracketed ones to proofs in intuitionistic logic.
Because the underlying logic of MLTT is intuitionistic, we achieve a tight correspondence between MLTT and game semantics by focusing on winning, well-bracketed strategies.
\begin{definition}[constraints on strategies \cite{coquand1995semantics,laird1997full,mccusker1998games,abramsky1999game}]
\label{DefConstraintsOnStrategies}
A strategy $\sigma : G$ is
\begin{itemize}

\item \emph{Total} if it always responds: $\forall \boldsymbol{s} \in \sigma, \boldsymbol{s} m \in G . \, \exists \boldsymbol{s} m n \in \sigma$;

\item \emph{Innocent} if it only depends on P-views: $\forall \boldsymbol{s} m n \in \sigma, \boldsymbol{t} l \in G . \, \lceil \boldsymbol{s} m \rceil = \lceil \boldsymbol{t} l \rceil \Rightarrow \exists \boldsymbol{t}lr \in \sigma . \, \lceil \boldsymbol{s} m n \rceil = \lceil \boldsymbol{t}lr \rceil$;

\item \emph{Noetherian} if there is no strictly increasing (with respect to the prefix relation $\preceq$) infinite sequence of elements in the set $\lceil \sigma \rceil \colonequals \{ \, \lceil \boldsymbol{s} \rceil \mid \boldsymbol{s} \in \sigma \, \}$ of all P-views in $\sigma$;

\item \emph{Winning} if it is total, innocent and noetherian;

\item \emph{Well-bracketed} if its `question-answering' in P-views is in the `last-question-first-answered' fashion: If $\boldsymbol{s} q \boldsymbol{t} a \in \sigma$, where $\lambda^{\mathrm{QA}}(q) = \mathrm{Q}$, $\lambda^{\mathrm{QA}}(a) = \mathrm{A}$ and $\mathcal{J}_{\boldsymbol{s}q\boldsymbol{t}a}(a) = q$, then each question occurring in $\boldsymbol{t'}$, where the P-view $\lceil \boldsymbol{s} q \boldsymbol{t} \rceil$ has $\lceil \boldsymbol{s} q \boldsymbol{t} \rceil = \lceil \boldsymbol{s} q \rceil . \boldsymbol{t'}$ by visibility, justifies an answer occurring in $\boldsymbol{t'}$.

\end{itemize}
\end{definition}

\begin{example}
The strategies $\top : T$ and $\underline{n} : N$ for all $n \in \mathbb{N}$ are winning and well-bracketed, while the strategies $\bot : 0$ and $\bot : N$ are not even total, let alone winning. 
\end{example}

Let us next recall standard constructions on games and strategies.
\begin{convention}
For brevity and readability, we omit `tags' for disjoint union $\uplus$. 
For instance, we write $x \in A \uplus B$ if $x \in A$ or $x \in B$; also, given relations $R_A \subseteq A \times A$ and $R_B \subseteq B \times B$, we write $R_A \uplus R_B$ for the relation on $A \uplus B$ such that $(x, y) \in R_A \uplus R_B \ratio \Leftrightarrow (x, y) \in R_A \vee (x, y) \in R_B$. 
\end{convention}

\begin{definition}[constructions on arenas \cite{mccusker1998games}]
\label{DefConstructionsOnArenas}
Given arenas $A$ and $B$, we define
\begin{itemize}

\item $A \uplus B \colonequals (M_A \uplus M_B, \vdash_A \uplus \vdash_B)$; 

\item $A \multimap B \colonequals (\{ \, a^{(x^\bot)y} \mid a^{xy} \in M_A \, \} \uplus M_B, \vdash_{A \multimap B})$, $\mathrm{O}^\bot \colonequals \mathrm{P}$, $\mathrm{P}^\bot \colonequals \mathrm{O}$, $\star \vdash_{A \multimap B} m :\Leftrightarrow \star \vdash_B m$ and $m \vdash_{A \multimap B} n :\Leftrightarrow m \vdash_A n \vee m \vdash_B n \vee (\star \vdash_B m \wedge \star \vdash_A n)$.


\end{itemize}
\end{definition}

\begin{definition}[constructions on games \cite{mccusker1998games}]
\label{DefConstructionsOnGames}
Given games $G$ and $H$, we define
\begin{itemize}

\item The \emph{tensor} $G \otimes H \colonequals \{ \, \boldsymbol{s} \in \mathscr{L}_{\mathrm{Arn}(G) \uplus \mathrm{Arn}(H)} \mid \forall X \in \{ G, H \} . \, \boldsymbol{s} \upharpoonright X \in X \, \}$ of $G$ and $H$, where $\boldsymbol{s} \upharpoonright X \sqsubseteq \boldsymbol{s}$ consists of occurrences of moves in $X$;

\item The \emph{exponential} $\oc G \colonequals \{ \, \boldsymbol{s} \in \mathscr{L}_{\mathrm{Arn}(G)} \mid \forall i \in |\boldsymbol{s}| . \, \mathcal{J}_{\boldsymbol{s}}(i) = 0 \Rightarrow \boldsymbol{s} \upharpoonright \{ (\boldsymbol{s}(i), i) \} \in G \, \}$ of $G$, where $\boldsymbol{s} \upharpoonright \{ (\boldsymbol{s}(i), i) \} \sqsubseteq \boldsymbol{s}$ consists of occurrence in $\boldsymbol{s}$ hereditarily justified by the initial one $(\boldsymbol{s}(i), i)$ in $\boldsymbol{s}$;

\item The \emph{product} $G \mathbin{\&} H \colonequals \{ \, \boldsymbol{s} \in \mathscr{L}_{\mathrm{Arn}(G) \mathbin{\uplus} \mathrm{Arn}(H)} \mid (\boldsymbol{s} \upharpoonright G \in G \wedge \boldsymbol{s} \upharpoonright H = \boldsymbol{\epsilon}) \vee (\boldsymbol{s} \upharpoonright G = \boldsymbol{\epsilon} \wedge \boldsymbol{s} \upharpoonright H \in H) \, \}$ of $G$ and $H$;

\item The \emph{linear implication} $G \multimap H \colonequals \{ \, \boldsymbol{s} \in \mathscr{L}_{\mathrm{Arn}(G) \multimap \mathrm{Arn}(H)} \mid \boldsymbol{s} \upharpoonright G^\bot \in G, \boldsymbol{s} \upharpoonright H \in H \, \}$ from $G$ to $H$, also written $H^G$, where $\boldsymbol{s} \upharpoonright G^\bot$ is obtained from $\boldsymbol{s} \upharpoonright G$ by modifying all the moves $m^{(x^\bot)y}$ occurring in $\boldsymbol{s} \upharpoonright G$ into $m^{xy}$;

\item The \emph{implication} $G \Rightarrow H \colonequals \oc G \multimap H$ from $G$ to $H$.

\end{itemize}

Notationally, exponential $\oc$ precedes other constructions on games, while tensor $\otimes$ and product $\mathbin{\&}$ do linear implication $\multimap$ and implication $\Rightarrow$. 
\end{definition}

\begin{definition}[constructions on strategies \cite{mccusker1998games}]
\label{DefConstructionsOnStrategies}
Given strategies $\phi : A \multimap B$, $\sigma : C \multimap D$, $\tau : A \multimap C$, $\psi : B \multimap C$ and $\theta : \oc A \multimap B$, we define
\begin{itemize}

\item The \emph{copy-cat} $\mathrm{cp}_A \colonequals \{ \, \boldsymbol{s} \in (A_{[0]} \multimap A_{[1]})^{\mathrm{Even}} \mid \forall \boldsymbol{t} \preceq \boldsymbol{s} . \, \mathrm{Even}(\boldsymbol{t}) \Rightarrow \boldsymbol{t} \upharpoonright A_{[0]}^\bot = \boldsymbol{t} \upharpoonright A_{[1]} \, \}$ on $A$; 

\item The \emph{dereliction} $\mathrm{der}_A \colonequals \{ \, \boldsymbol{s} \in (\oc A \multimap A)^{\mathrm{Even}} \mid \forall \boldsymbol{t} \preceq \boldsymbol{s} . \, \mathrm{Even}(\boldsymbol{t}) \Rightarrow \boldsymbol{t} \upharpoonright \oc A^\bot = \boldsymbol{t} \upharpoonright A \, \}$ on $A$;

\item The \emph{tensor} $\phi \otimes \sigma \colonequals \{ \, \boldsymbol{s} \in A \otimes C \multimap B \otimes D \mid \boldsymbol{s} \upharpoonright A, B \in \phi, \boldsymbol{s} \upharpoonright C, D \in \sigma \, \}$ of $\phi$ and $\sigma$, where $\boldsymbol{s} \upharpoonright A, B \sqsubseteq \boldsymbol{s}$ (resp. $\boldsymbol{s} \upharpoonright C, D \sqsubseteq \boldsymbol{s}$) consists of occurrences of moves in $A$ or $B$ (resp. $C$ or $D$);

\item The \emph{pairing} $\langle \phi, \tau \rangle \colonequals \{ \, \boldsymbol{s} \in A \multimap B \mathbin{\&} C \mid (\boldsymbol{s} \upharpoonright A, B \in \phi \wedge \boldsymbol{s} \upharpoonright C = \boldsymbol{\epsilon}) \vee (\boldsymbol{s} \upharpoonright A, C \in \tau \wedge \boldsymbol{s} \upharpoonright B = \boldsymbol{\epsilon}) \, \}$ of $\phi$ and $\tau$;

\item The \emph{composition} $\phi ; \psi \colonequals \{ \, \boldsymbol{s} \upharpoonright A, C \mid \boldsymbol{s} \in \phi \parallel \psi \, \}$ of $\phi$ and $\psi$ (n.b., $\phi; \psi$ is also written $\psi \circ \phi$), where $\phi \parallel \psi \colonequals \{ \, \boldsymbol{s} \in \mathscr{J} \mid \boldsymbol{s} \upharpoonright A, B_{[0]} \in \phi, \boldsymbol{s} \upharpoonright B_{[1]}, C \in \psi, \boldsymbol{s} \upharpoonright B_{[0]}^\bot, B_{[1]}^\bot \in \mathrm{cp}_B \, \}$, $\mathscr{J} \colonequals \mathscr{J}_{\mathrm{Arn}(((A \multimap B_{[0]}) \multimap B_{[1]}) \multimap C)}$, $\boldsymbol{s} \upharpoonright B_{[0]}^\bot, B_{[1]}^\bot$ is obtained from $\boldsymbol{s} \upharpoonright B_{[0]}, B_{[1]}$ by applying the operation $(\_)^\bot : m^{xy} \mapsto m^{x^\bot y}$ (Definition~\ref{DefConstructionsOnArenas}) on all moves $m^{xy}$;

\item The \emph{promotion} $\theta^\dagger \colonequals \{ \, \boldsymbol{s} \in (\oc A \multimap \oc B)^{\mathrm{Even}} \mid \forall i \in |\boldsymbol{s}| . \, \mathcal{J}_{\boldsymbol{s}}(i) = 0 \Rightarrow \boldsymbol{s} \upharpoonright \{ (\boldsymbol{s}(i), i) \} \in \theta \, \}$ of $\theta$. 
\end{itemize}
\end{definition}

\begin{example}
The promotion $\mathrm{succ}^\dagger : \oc N \multimap \oc N$ of the strategy 
\begin{equation*}
\mathrm{succ} \colonequals \{ \, q_{[1]} . q_{[0]} . n_{[0]} . n+1_{[1]} \mid n \in \mathbb{N} \, \} : N_{[0]} \Rightarrow N_{[1]}
\end{equation*}
computes as sketched in the introduction (\S\ref{ExamplesOfGamesAndStrategies}).
\end{example}

Let us summarise the present section by:
\begin{definition}[categories of games \cite{mccusker1998games,yamada2022game}]
\label{DefCategoriesOfGamesAndStrategies}
The category $\mathbb{G}_\oc$ consists of 
\begin{itemize}

\item Well-opened games as objects;

\item Strategies on the implication $A \Rightarrow B$ as morphisms $A \rightarrow B$;

\item The composition $\psi \bullet \phi \colonequals \psi \circ \phi^\dagger : A \Rightarrow C$ of strategies as the composition of morphisms $\phi : A \rightarrow B$ and $\psi : B \rightarrow C$;

\item The dereliction $\mathrm{der}_A$ as the identity on each object $A$.

\end{itemize}

The subcategory $\mathbb{LG}_\oc$ (resp. $\mathbb{WG}$) of $\mathbb{G}_\oc$ consists of well-founded, well-opened games as objects, and winning (resp. winning, well-bracketed) strategies as morphisms.
\end{definition}

We have to focus on \emph{well-opened} games in these categories since otherwise the identities would not be well-defined \cite[pp.~42--43]{mccusker1998games}.
We use the subscript $(\_)_\oc$ in order to distinguish these categories from the \emph{linear} ones, in which morphisms $A \rightarrow B$ are strategies on the linear implication $A \multimap B$.

\begin{notation}
We are not bothered about the distinction between strategies on games $G$ and $T \Rightarrow G$.
\if0
\begin{itemize}

\item Given a strategy $\sigma : G$, we write $\sigma^{T} : T \multimap G$ and $\sigma^{\oc T} : T \Rightarrow G$ for the evident strategies that coincide with $\sigma$ up to `tags';

\item Given strategies $\phi : T \multimap G$ and $\phi' : T \Rightarrow G$, we write $\phi_{T}, \phi'_{\oc T} : G$ for the evident strategies that coincide with $\phi$ and $\phi'$ up to `tags' respectively;

\item Given strategies $\psi : A \multimap B$ and $\alpha : A$, we define $\psi \circ \alpha  \colonequals (\psi \circ \alpha^{T})_{T} : B$;

\item Given strategies $\alpha : A$ and $\beta : B$, we define $\alpha \otimes \beta \colonequals ((\alpha^{T} \otimes \beta^{T}) \circ \iota)_{T} : A \otimes B$, where $\iota$ is the unique strategy on $T \multimap T \otimes T$, and $\langle \alpha, \beta \rangle \colonequals \langle \alpha^{T}, \beta^{T} \rangle_{T} : A \mathbin{\&} B$;

\item Given a strategy $\alpha : A$, we define $\alpha^\dagger \colonequals ((\alpha^{\oc T})^\dagger)_{\oc T} : \oc A$.

\end{itemize}
\fi
\end{notation}

\subsection{Game semantics of Martin-L\"{o}f type theory}
\label{GameSemanticsOfMLTT}
The previous work \cite{yamada2022game} establishes game semantics of MLTT based on games and strategies recalled in the previous section.
The central idea of the precious work is to generalise games into \emph{predicate (p-) games}, which corresponds to the generalisation of simple types to dependent types:
\begin{definition}[p-games \cite{yamada2022game}]
\label{DefPredicateGames}
A \emph{predicate (p-) game} is a pair $\Gamma = (|\Gamma|, \| \Gamma \|)$ of a game $|\Gamma|$ and a family $\| \Gamma \| = (\Gamma(\gamma))_{\gamma : |\Gamma|}$ of subgames $\Gamma(\gamma) \subseteq |\Gamma|$.
It is \emph{well-founded} (resp. \emph{well-opened}) if so is $|\Gamma|$.
\end{definition}

\begin{example}
\label{ExPGames}
Given a game $G$, we have the p-game $\mathscr{P}(G) \colonequals (G, \kappa_G)$, where $\kappa_G$ is the constant family at $G$.
Clearly, $G$ and $\mathscr{P}(G)$ are essentially the same. 
We abbreviate $\mathscr{P}(T)$, $\mathscr{P}(0)$ and $\mathscr{P}(N)$ as $T$, $0$ and $N$, and call them the \emph{terminal p-game}, the \emph{empty p-game} and the \emph{natural number p-game}, respectively.
\end{example}

Before recalling strategies on p-games, we need a few preliminary concepts:
\begin{definition}[liveness ordering \cite{chroboczek2000game}]
\label{DefLivenessOrdering}
The \emph{liveness ordering} is a partial order $\preccurlyeq$ between games \cite[Definition~8 and Theorem~9]{chroboczek2000game}, which defines $G \preccurlyeq H$ to mean that O (resp. P) is less (resp. more) restricted in $G$ than in $H$, i.e., they satisfy
\begin{enumerate}

\item If $\boldsymbol{s} \in (G \cap H)^{\mathrm{Even}}$ and $\boldsymbol{s}m \in H^{\mathrm{Odd}}$, then $\boldsymbol{s}m \in G^{\mathrm{Odd}}$;

\item If $\boldsymbol{t}l \in (G \cap H)^{\mathrm{Odd}}$ and $\boldsymbol{t}lr \in G^{\mathrm{Even}}$, then $\boldsymbol{t}lr \in H^{\mathrm{Even}}$.

\end{enumerate}
\end{definition}

\begin{definition}[closures of strategies \cite{yamada2022game}]
The \emph{closure} of a strategy $\sigma : G$ with respect to another game $H$ is the subgame $\overline{\sigma}_H \colonequals \{ \boldsymbol{\epsilon} \} \cup \{ \, \boldsymbol{s}m \in H^{\mathrm{Odd}} \mid \boldsymbol{s} \in \overline{\sigma}_H \, \} \cup \{ \, \boldsymbol{t}lr \in \sigma \mid \boldsymbol{t}l \in \overline{\sigma}_H \, \} \subseteq \sigma \cup H$.
\end{definition}

We see by induction that $\overline{\sigma}_G = \sigma \cup \{ \, \boldsymbol{s}m \in G \mid \boldsymbol{s} \in \sigma \, \}$ holds for all strategies $\sigma : G$.
Moreover:
\begin{proposition}[liveness characterisation \cite{yamada2022game}]
\label{PropLivenessCharacterisation}
Assume $\sigma : G$ and $H \in \mathrm{sub}(G)$.
\begin{enumerate}

\item $\overline{\sigma}_H^{\mathrm{Even}} : H$ if and only if $\overline{\sigma}_{G} \preccurlyeq H$;

\item If $\overline{\sigma}_{G} \preccurlyeq H$, then $\overline{\sigma}_{H}^{\mathrm{Even}} = \sigma \cap H$.

\end{enumerate}
\end{proposition}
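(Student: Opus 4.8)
The plan is to recast the liveness hypothesis $\overline{\sigma}_G \preccurlyeq H$ as a single concrete containment condition and then settle both parts by two short inductions on position length. First I would exploit the explicit description $\overline{\sigma}_G = \sigma \cup \{ \, \boldsymbol{s}m \in G \mid \boldsymbol{s} \in \sigma \, \}$ recorded just before the statement, which yields $\overline{\sigma}_G^{\mathrm{Even}} = \sigma$ and $\overline{\sigma}_G^{\mathrm{Odd}} = \{ \, \boldsymbol{s}m \in G \mid \boldsymbol{s} \in \sigma \, \}$. Substituting these into Definition~\ref{DefLivenessOrdering} with the first game $\overline{\sigma}_G$ and the second game $H$, the first condition becomes vacuous: if $\boldsymbol{s} \in (\overline{\sigma}_G \cap H)^{\mathrm{Even}} = \sigma \cap H$ and $\boldsymbol{s}m \in H^{\mathrm{Odd}}$, then $\boldsymbol{s} \in \sigma$ and $\boldsymbol{s}m \in H \subseteq G$, so $\boldsymbol{s}m \in \overline{\sigma}_G^{\mathrm{Odd}}$ automatically. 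Hence $\overline{\sigma}_G \preccurlyeq H$ reduces to the second condition alone, which unwinds to
\[ (\ast) \qquad \forall\, \boldsymbol{t}l \in H^{\mathrm{Odd}},\ \boldsymbol{t}lr \in \sigma .\quad \boldsymbol{t} \in \sigma \ \implies\ \boldsymbol{t}lr \in H, \]
i.e. whenever $\sigma$ is about to respond at an odd position of $H$ that it already occupies, the response stays inside $H$. This reformulation is the crux; everything afterwards is routine.

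Next I would read off the shape of $\overline{\sigma}_H$ from its three defining clauses. The clause adding $\boldsymbol{s}m \in H^{\mathrm{Odd}}$ contributes only odd positions, all lying in $H$, while the clause adding $\boldsymbol{t}lr \in \sigma$ contributes only even positions, all lying in $\sigma$. Thus $\overline{\sigma}_H^{\mathrm{Odd}} \subseteq H$ and $\overline{\sigma}_H^{\mathrm{Even}} \subseteq \sigma$ hold unconditionally. Since $\overline{\sigma}_H^{\mathrm{Even}} \subseteq \sigma$, it is automatically deterministic, and it is even-prefix-closed because $\boldsymbol{s}mn \in \overline{\sigma}_H^{\mathrm{Even}}$ forces $\boldsymbol{s}m \in \overline{\sigma}_H^{\mathrm{Odd}}$ (via the $\sigma$-clause), which in turn forces $\boldsymbol{s} \in \overline{\sigma}_H^{\mathrm{Even}}$ (via the $H^{\mathrm{Odd}}$-clause). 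Consequently $\overline{\sigma}_H^{\mathrm{Even}} : H$ holds \emph{iff} $\overline{\sigma}_H^{\mathrm{Even}} \subseteq H$, which reduces Part 1 to a containment.

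I would then prove the unconditional inclusion $\sigma \cap H \subseteq \overline{\sigma}_H^{\mathrm{Even}}$ by induction on length, using only prefix-closure of $H$ and even-prefix-closure of $\sigma$: given $\boldsymbol{t}lr \in \sigma \cap H$, the shorter prefix $\boldsymbol{t} \in \sigma \cap H$ lies in $\overline{\sigma}_H$ by induction, whence $\boldsymbol{t}l \in \overline{\sigma}_H$ ($H^{\mathrm{Odd}}$-clause) and then $\boldsymbol{t}lr \in \overline{\sigma}_H$ ($\sigma$-clause). Conversely, assuming $(\ast)$, I would prove $\overline{\sigma}_H \subseteq H$ together with $\overline{\sigma}_H^{\mathrm{Even}} \subseteq \sigma \cap H$ by a simultaneous induction: odd positions lie in $H$ directly, and an even position $\boldsymbol{t}lr$ comes from $\boldsymbol{t}l \in \overline{\sigma}_H^{\mathrm{Odd}} \subseteq H$ with $\boldsymbol{t} \in \sigma$, so $(\ast)$ places $\boldsymbol{t}lr \in H$. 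The two inclusions give Part 2 and the ``if'' direction of Part 1. For the ``only if'' direction, assuming $\overline{\sigma}_H^{\mathrm{Even}} \subseteq H$, I would verify $(\ast)$ directly: for $\boldsymbol{t}l \in H^{\mathrm{Odd}}$ with $\boldsymbol{t} \in \sigma$ and $\boldsymbol{t}lr \in \sigma$, the first inclusion gives $\boldsymbol{t} \in \sigma \cap H \subseteq \overline{\sigma}_H$, so $\boldsymbol{t}l \in \overline{\sigma}_H$ and $\boldsymbol{t}lr \in \overline{\sigma}_H^{\mathrm{Even}} \subseteq H$.

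The only genuine obstacle I anticipate is the first step: correctly unwinding the two conditions of the liveness order against the explicit form of $\overline{\sigma}_G$ and recognising that the first condition is free while the second is exactly $(\ast)$. After that, the two inductions are mechanical, with care needed only in the parity bookkeeping (tracking which defining clause of the closure produces odd versus even positions) and in running the ``if'' half of Part 1 alongside the $\subseteq$ half of Part 2, since they share the invariant $\overline{\sigma}_H^{\mathrm{Even}} \subseteq \sigma \cap H$.
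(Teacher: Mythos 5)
The paper itself does not prove this proposition: it is recalled from the previous work \cite{yamada2022game} with only a citation, so there is no in-paper argument to compare yours against. Judged on its own terms, your proof is correct and complete. The crucial step — unwinding $\overline{\sigma}_G \preccurlyeq H$ against $\overline{\sigma}_G^{\mathrm{Even}} = \sigma$ and $\overline{\sigma}_G^{\mathrm{Odd}} = \{\, \boldsymbol{s}m \in G \mid \boldsymbol{s} \in \sigma \,\}$ — is handled properly: condition~1 of Definition~\ref{DefLivenessOrdering} indeed trivialises because $H \subseteq G$ forces $\boldsymbol{s}m \in H^{\mathrm{Odd}} \subseteq G^{\mathrm{Odd}}$, and condition~2 collapses exactly to your $(\ast)$, again using $H \subseteq G$ to rewrite $(\overline{\sigma}_G \cap H)^{\mathrm{Odd}}$ as $\{\, \boldsymbol{t}l \in H^{\mathrm{Odd}} \mid \boldsymbol{t} \in \sigma \,\}$. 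The unconditional facts $\overline{\sigma}_H^{\mathrm{Odd}} \subseteq H$, $\overline{\sigma}_H^{\mathrm{Even}} \subseteq \sigma$ (whence determinacy), and even-prefix-closure via the case analysis on the last defining clause are all sound, as is the reduction of Part~1 to the single containment $\overline{\sigma}_H^{\mathrm{Even}} \subseteq H$; the two inductions then discharge both parts, with the shared invariant $\overline{\sigma}_H^{\mathrm{Even}} \subseteq \sigma \cap H$ correctly doing double duty for Part~2 and the ``if'' half of Part~1.
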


This proposition enables us to define strategies on p-games as follows: 
\begin{definition}[strategies on p-games \cite{yamada2022game}]
\label{DefStrategiesOnPredicateGames}
A \emph{strategy} on a p-game $\Gamma$, written $\gamma : \Gamma$, is a strategy $\gamma : |\Gamma|$ such that $\overline{\gamma}_{|\Gamma|} \preccurlyeq \Gamma(\gamma)$.
It is \emph{total} (resp. \emph{innocent}, \emph{noetherian}, \emph{well-bracketed}) if so is $\gamma \cap \Gamma(\gamma) : \Gamma(\gamma)$.
\end{definition}

We write $\mathrm{st}(\Gamma)$ for the set $\{ \, \gamma \mid \gamma : \Gamma \, \}$ of all strategies on a p-game $\Gamma$ and define $\overline{\gamma}_\Gamma \colonequals \overline{\gamma}_{\Gamma(\gamma)}$ for all $\gamma : \Gamma$.
A \emph{position} in $\Gamma$ is a prefix of a sequence $q_\Gamma \gamma \boldsymbol{s}$ such that $\gamma : \Gamma$ and $\boldsymbol{s} \in \overline{\gamma}_\Gamma$, where $q_\Gamma$ is an arbitrarily fixed element such that $q_\Gamma \not\in M_{|\Gamma|}$, $q_\Gamma \gamma$ is called an \emph{initial protocol}, and $\boldsymbol{s}$ is called an \emph{actual position}.

A play in $\Gamma$ proceeds as follows.
First, \emph{Judge} asks Player a question $q_\Gamma$ (`What is your strategy?') and she answers it by a strategy $\gamma : \Gamma$ (`It is $\gamma$!').
After this initial protocol, an ordinary play on the game $\Gamma(\gamma)$ between Player and Opponent follows, in which Player must use the declared one $\gamma$ restricted to $\Gamma(\gamma)$, i.e., $\gamma \cap \Gamma(\gamma) = \overline{\gamma}_{\Gamma(\gamma)}^{\mathrm{Even}} : \Gamma(\gamma)$.
Thus, $\gamma : \Gamma$ is \emph{winning} (resp. \emph{well-bracketed}) if so is $\overline{\gamma}_{\Gamma(\gamma)}^{\mathrm{Even}} : \Gamma(\gamma)$.

Judge and the initial protocol are mere devices for requiring Player to \emph{fix} the strategy $\gamma : |\Gamma|$ and the game $\Gamma(\gamma)$ that \emph{pass the test} $\overline{\gamma}_{\Gamma(\gamma)}^{\mathrm{Even}} : \Gamma(\gamma)$.
See the beginning of \cite[\S 3]{yamada2022game} for an illustration of how and why these strategy filtering and fixing are necessary for an interpretation of MLTT.

We next recall basic constructions on p-games:
\begin{notation}
Let $G$ be a game, $\boldsymbol{s} \in \oc G$ and $i \in \mathbb{N}$. 
We write $\boldsymbol{s} \upharpoonright i$ for the j-subsequence of $\boldsymbol{s}$ that consists of occurrences hereditarily justified by the $(i+1)$st initial occurrence in $\boldsymbol{s}$.
For instance, if $\boldsymbol{s} = q2q1q0 \in \oc N$, then $\boldsymbol{s} \upharpoonright 0 = q2$, $\boldsymbol{s} \upharpoonright 1 = q1$ and $\boldsymbol{s} \upharpoonright 2 = q0$.

Given a strategy $\sigma$ on the tensor $G_0 \otimes G_1$ of games $G_i$ ($i = 0, 1$), we define 
\begin{equation}
\sigma \upharpoonright G_i \colonequals \begin{cases} \sigma_i &\text{if $\sigma = \sigma_0 \otimes \sigma_1$ for (necessarily unique) $\sigma_0 : G_0$ and $\sigma_1 : G_1$;} \\ \uparrow &\text{otherwise, where $\uparrow$ means being \emph{undefined}.} \end{cases}
\end{equation}

Similarly, given a strategy $\tau$ on the exponential $\oc G$ of a game $G$ and $j \in \mathbb{N}$, we define
\begin{equation}
\tau \upharpoonright j \colonequals \begin{cases} \{ \, \boldsymbol{s} \upharpoonright j \mid \boldsymbol{s} \in \tau \, \} &\text{if $\{ \, \boldsymbol{s} \upharpoonright k \mid \boldsymbol{s} \in \tau \, \} : G$ for all $k \in \mathbb{N}$;} \\ \uparrow &\text{otherwise.} \end{cases}
\end{equation}

Given a p-game $\Gamma$, we define the value $\Gamma(\uparrow)$ to be \emph{undefined}, and the constructions $\otimes$, $\multimap$, $\&$ and $\oc$ on undefined games to be \emph{undefined}.
Finally, we extend the relation $\overline{\gamma}_{|\Gamma|} \preccurlyeq \Gamma(\gamma)$ by defining that \emph{it does not hold if the game $\Gamma(\gamma)$ is undefined}.
\end{notation}

\begin{definition}[product and tensor on p-games \cite{yamada2022game}]
The \emph{product} of p-games $\Gamma$ and $\Delta$ is the p-game $\Gamma \mathbin{\&} \Delta$ defined by $|\Gamma \mathbin{\&} \Delta| \colonequals |\Gamma| \mathbin{\&} |\Delta|$ and $(\Gamma \mathbin{\&} \Delta)(\langle \gamma, \delta \rangle) \colonequals \Gamma(\gamma) \mathbin{\&} \Delta(\delta)$ for all $\langle \gamma, \delta \rangle : |\Gamma \mathbin{\&} \Delta|$, and their \emph{tensor} is the p-game $\Gamma \otimes \Delta$ defined by $|\Gamma \otimes \Delta| \colonequals |\Gamma| \otimes |\Delta|$ and $(\Gamma \otimes \Delta)(\sigma) \colonequals \Gamma(\sigma \upharpoonright |\Gamma|) \otimes \Delta(\sigma \upharpoonright |\Delta|)$ for all $\sigma : |\Gamma \otimes \Delta|$.
\end{definition}

\begin{definition}[countable tensor \cite{yamada2022game}]
\label{DefCountableTensor}
The \emph{countable tensor} of a family $(G_i)_{i \in \mathbb{N}}$ of subgames $G_i \subseteq H$ is the subgame $\otimes_{i \in \mathbb{N}}G_i \colonequals \{ \, \boldsymbol{s} \in \oc H \mid \forall j \in \mathbb{N} . \, \boldsymbol{s} \upharpoonright j \in G_j \, \} \subseteq \oc H$.
\end{definition} 

\begin{definition}[exponential on p-games \cite{yamada2022game}]
The \emph{exponential} of a p-game $\Gamma$ is the p-game $\oc \Gamma$ defined by $|\oc \Gamma| \colonequals \oc |\Gamma|$ and $(\oc \Gamma)(\sigma) \colonequals \otimes_{i \in \mathbb{N}} \Gamma(\sigma \upharpoonright i)$ for all $\sigma : | \oc \Gamma |$.
\end{definition}

Hence, strategies on $\Gamma \mathbin{\&} \Delta$ are the pairings $\langle \gamma, \delta \rangle$ of $\gamma : \Gamma$ and $\delta : \Delta$, strategies on $\Gamma \otimes \Delta$ are the tensors $\gamma \otimes \delta$ of $\gamma : \Gamma$ and $\delta : \Delta$, and strategies on $\oc \Gamma$ are those $\sigma : \oc |\Gamma|$ such that $\{ \, \boldsymbol{s} \upharpoonright i \mid \boldsymbol{s} \in \sigma \, \} : \Gamma$ for all $i \in \mathbb{N}$.

\begin{definition}[categories of p-games \cite{yamada2022game}]
\label{DefGameSemanticCategories}
The category $\mathbb{PG}_\oc$ consists of

\begin{itemize}

\item Well-opened p-games as objects;

\item Strategies on the implication $\Gamma \Rightarrow \Delta$ as morphisms $\Gamma \rightarrow \Delta$;

\item The composition $\psi \bullet \phi \colonequals \psi \circ \phi^\dagger : \Gamma \Rightarrow \Theta$ of strategies as the composition of morphisms $\phi : \Gamma \rightarrow \Delta$ and $\psi : \Delta \rightarrow \Theta$;

\item The dereliction $\mathrm{der}_{|\Gamma|} : \Gamma \Rightarrow \Gamma$ as the identity $\mathrm{id}_\Gamma$ on each object $\Gamma$.

\end{itemize}

The subcategory $\mathbb{LPG}_\oc$ (resp. $\mathbb{WPG}_\oc$) of $\mathbb{PG}_\oc$ consists of well-founded, well-opened p-games as objects, and winning (resp. winning, well-bracketed) strategies as morphisms.
\end{definition}

Because the underlying logic of MLTT is intuitionistic, the previous work \cite{yamada2022game} focuses on the category $\mathbb{WPG}_\oc$.
It establishes game semantics of MLTT by showing that the category $\mathbb{WPG}_\oc$ gives rise to abstract semantics of MLTT, called a \emph{category with families (CwF)} \cite{dybjer1996internal}:

\begin{definition}[CwFs \cite{dybjer1996internal,hofmann1997syntax}]
\label{DefCwFs}
A \emph{category with families (CwF)} is a tuple 
\begin{equation*}
\mathcal{C} = (\mathcal{C}, \mathrm{Ty}, \mathrm{Tm}, \_\{\_\}, T, \_.\_, \mathrm{p}, \mathrm{v}, \langle\_,\_\rangle_\_),
\end{equation*}
where
\begin{itemize}

\item $\mathcal{C}$ is a category with a terminal object $T \in \mathcal{C}$;

\item $\mathrm{Ty}$ assigns, to each object $\Gamma \in \mathcal{C}$, a set $\mathrm{Ty}(\Gamma)$ of \emph{types} in the \emph{context} $\Gamma$;

\item $\mathrm{Tm}$ assigns, to each pair $(\Gamma, A)$ of an object $\Gamma \in \mathcal{C}$ and a type $A \in \mathrm{Ty}(\Gamma)$, a set $\mathrm{Tm}(\Gamma, A)$ of \emph{terms} of type $A$ in the context $\Gamma$;

\item To each morphism $\phi : \Delta \to \Gamma$, $\_\{\_\}$ assigns a map $\_\{\phi\} : \mathrm{Ty}(\Gamma) \to \mathrm{Ty}(\Delta)$, called the \emph{substitution on types}, and a family $(\_\{\phi\}_A)_{A \in \mathrm{Ty}(\Gamma)}$ of maps $\_\{\phi\}_A : \mathrm{Tm}(\Gamma, A) \to \mathrm{Tm}(\Delta, A\{\phi\})$, called the \emph{substitutions on terms};


\item $\_ . \_$ assigns, to each pair $(\Gamma, A)$ of a context $\Gamma \in \mathcal{C}$ and a type $A \in \mathrm{Ty}(\Gamma)$, a context $\Gamma . A \in \mathcal{C}$, called the \emph{comprehension} of $A$;

\item $\mathrm{p}$ (resp. $\mathrm{v}$) associates each pair $(\Gamma, A)$ of a context $\Gamma \in \mathcal{C}$ and a type $A \in \mathrm{Ty}(\Gamma)$ with a morphism $\mathrm{p}_A : \Gamma . A \to \Gamma$ (resp. a term $\mathrm{v}_A \in \mathrm{Tm}(\Gamma . A, A\{\mathrm{p}_A\})$), called the \emph{first projection} on $A$ (resp. the \emph{second projection} on $A$);

\item $\langle \_, \_ \rangle_\_$ assigns, to each triple $(\phi, A, \Check{\alpha})$ of a morphism $\phi : \Delta \to \Gamma$, a type $A \in \mathrm{Ty}(\Gamma)$ and a term $ \Check{\alpha} \in \mathrm{Tm}(\Delta, A\{\phi\})$, a morphism $\langle \phi,  \Check{\alpha} \rangle_A : \Delta \to \Gamma . A$, called the \emph{extension} of $\phi$ by $ \Check{\alpha}$,

\end{itemize}
that satisfies, for any $\Theta \in \mathcal{C}$, $\varphi : \Theta \to \Delta$ and $\alpha \in \mathrm{Tm}(\Gamma, A)$, the equations
\begin{mathpar}
A \{ \mathrm{id}_\Gamma \} = A
\and
A \{ \phi \circ \varphi \} = A \{ \phi \} \{ \varphi \}
\and
\alpha \{ \mathrm{id}_\Gamma \}_A = \alpha
\and
\alpha \{ \phi \circ \varphi \}_A = \alpha \{ \phi \}_A \{ \varphi \}_{A\{\phi\}}
\and
\mathrm{p}_A \circ \langle \phi,  \Check{\alpha} \rangle_A = \phi
\and
\mathrm{v}_A \{ \langle \phi,  \Check{\alpha} \rangle_A \} =  \Check{\alpha}
\and
\langle \phi, \Check{\alpha} \rangle_A \circ \varphi = \langle \phi \circ \varphi, \Check{\alpha} \{ \varphi \}_{A\{\phi\}} \rangle_A
\and
\langle \mathrm{p}_A, \mathrm{v}_A \rangle_A = \mathrm{id}_{\Gamma . A}.
\end{mathpar}
\end{definition}

We sometimes write $\mathrm{Ty}_{\mathcal{C}}$, $\mathrm{Term}_{\mathcal{C}}$ and so on when we would like to emphasise the underlying CwF $\mathcal{C}$.
Roughly, judgements of MLTT are interpreted in a CwF $\mathcal{C}$ by
\begin{mathpar}
\mathsf{\vdash \Gamma \ ctx} \mapsto \llbracket \mathsf{\Gamma} \rrbracket \in \mathcal{C} \and
\mathsf{\Gamma \vdash A \ type} \mapsto \llbracket \mathrm{A} \rrbracket \in \mathrm{Ty}(\llbracket \mathsf{\Gamma} \rrbracket) \and
\mathsf{\Gamma \vdash a : A} \mapsto \llbracket \mathrm{A} \rrbracket \in \mathrm{Tm}(\llbracket \mathsf{\Gamma} \rrbracket, \llbracket \mathrm{A} \rrbracket) \and
\mathsf{\vdash \Gamma = \Delta \ ctx} \Rightarrow \llbracket \mathsf{\Gamma} \rrbracket = \llbracket \mathsf{\Delta} \rrbracket \and
\mathsf{\Gamma \vdash A = B \ type} \Rightarrow \llbracket \mathrm{A} \rrbracket = \llbracket \mathsf{B} \rrbracket \and
\mathsf{\Gamma \vdash a = a' : A} \Rightarrow \llbracket \mathrm{A} \rrbracket = \llbracket \mathsf{a'} \rrbracket,
\end{mathpar}
where $\llbracket \_ \rrbracket$ denotes the \emph{semantic map} or \emph{interpretation}.
See \cite{hofmann1997syntax} for the details.

In the following, we recall the additional structures on the category $\mathbb{WPG}_\oc$ that lift it to a CwF.
First, types in the CwF $\mathbb{WPG}_\oc$ are \emph{dependent p-games}:
\begin{definition}[dependent p-games \cite{yamada2022game}]
\label{DefDependentPredicateGames}
A \emph{linearly dependent predicate (p-) game} over a p-game $\Gamma$ is a pair $L = (|L|, \| L \|)$ of a game $|L|$ and a family $\| L \| = (L(\gamma_0))_{\gamma_0 \in \mathbb{WPG}_\oc(\Gamma)}$ of p-games $L(\gamma_0)$ such that $|L(\gamma_0)| = |L|$.
It is \emph{well-opened} (resp. \emph{well-founded}) if so is $|L|$.
The \emph{extension} of the family $\| L \|$ is the family $L^\star = (L^\star(\gamma))_{\gamma : \Gamma}$ of p-games $L^\star(\gamma)$ defined by 
\begin{equation}
L^\star(\gamma) \colonequals \begin{cases} L(\gamma) &\text{if $\gamma \in \mathbb{WPG}_\oc(\Gamma)$;} \\ \mathscr{P}(|L|) &\text{otherwise.} \end{cases}
\end{equation}

A \emph{dependent predicate (p-) game} over $\Gamma$ is a linearly dependent one over the exponential $\oc \Gamma$.
\end{definition}

\begin{notation}
We write $\mathscr{D}_\ell(\Gamma)$ (resp. $\mathscr{D}_\ell^{\mathrm{w}}(\Gamma)$) for the set of all linearly dependent p-games (resp. well-opened, well-founded ones) over $\Gamma$, and $\{ \Gamma' \}_\Gamma$ or $\{ \Gamma' \}$ for the \emph{constant} one at $\Gamma'$, i.e., $\{ \Gamma' \}_\Gamma \colonequals (\Gamma', \gamma : \Gamma \mapsto \Gamma')$.
Let $\mathscr{D}(\Gamma) \colonequals \mathscr{D}_\ell(\oc \Gamma)$ and $\mathscr{D}^{\mathrm{w}}(\Gamma) \colonequals \mathscr{D}^{\mathrm{w}}_\ell(\oc \Gamma)$.
We often write $\gamma_0^\dagger$ for an arbitrary element of $\mathbb{WPG}_\oc(\oc \Gamma)$, where $\gamma_0 \in \mathbb{WPG}_\oc(\Gamma)$, since elements of $\mathbb{WPG}_\oc(\oc \Gamma)$ are all \emph{innocent} and so promotions of elements of $\mathbb{WPG}_\oc(\Gamma)$.
\end{notation}

Next, terms in the CwF $\mathbb{WPG}_\oc$ are winning, well-bracketed strategies on the following p-games:
\begin{definition}[linear-pi and pi \cite{yamada2022game}]
\label{DefLinearPiSpaces}
Let $L$ be a linearly dependent p-game over a p-game $\Gamma$, and $A$ be a dependent p-game over $\Gamma$.
The \emph{linear-pi} from $\Gamma$ to $L$ is the p-game $\Pi_\ell (\Gamma, L)$ defined by $|\Pi_\ell (\Gamma, L)| \colonequals |L|^{|\Gamma|}$ and for all $\phi : |\Pi_\ell (\Gamma, L)|$
\begin{align*}
\Pi_\ell (\Gamma, L)(\phi) &\colonequals \{ \boldsymbol{\epsilon} \} \cup \{ \, \boldsymbol{s}m \in |\Pi_\ell (\Gamma, L)|^{\mathrm{Odd}} \mid \boldsymbol{s} \in \Pi_\ell (\Gamma, L)(\phi), \exists \gamma : \Gamma . \, \boldsymbol{s}m \in L^\star(\gamma)(\phi \circ \gamma)^{\overline{\gamma}_\Gamma} \, \} \\
&\cup \{ \, \boldsymbol{t}lr \in |\Pi_\ell (\Gamma, L)|^{\mathrm{Even}} \mid \boldsymbol{t}l \in \Pi_\ell (\Gamma, L)(\phi), \forall \gamma : \Gamma . \, \boldsymbol{t}l \in L^\star(\gamma)(\phi \circ \gamma)^{\overline{\gamma}_\Gamma} \Rightarrow \boldsymbol{t}lr \in L^\star(\gamma)(\phi \circ \gamma)^{\overline{\gamma}_\Gamma} \, \},
\end{align*}
and the \emph{pi} from $\Gamma$ to $A$ is the linear-pi $\Pi (\Gamma, A) \colonequals \Pi_\ell(\oc \Gamma, A)$. 
We write $\Gamma \Rightarrow A$ for $\Pi (\Gamma, A)$ if $A$ is constant. 
\end{definition}

Finally, comprehensions in the CwF $\mathbb{WPG}_\oc$ are given by:
\begin{definition}[sigma \cite{yamada2022game}]
\label{DefSigma}
The \emph{sigma} of a p-game $\Gamma$ and a dependent p-game $A$ over $\Gamma$ is the p-game $\Sigma (\Gamma, A)$ defined by $|\Sigma (\Gamma, A)| \colonequals |\Gamma| \mathbin{\&} |A|$ and $\Sigma (\Gamma, A)(\langle \gamma, \alpha \rangle) \colonequals \Gamma(\gamma) \mathbin{\&} A^\star(\gamma^\dagger)(\alpha)$ for all $\langle \gamma, \alpha \rangle : |\Sigma(\Gamma, A)|$. 
We write $\Gamma \mathbin{\&} A$ for $\Sigma(\Gamma, A)$ if $A$ is constant. 
\end{definition}

We are now ready to recall:
\begin{theorem}[a game-semantic CwF \cite{yamada2022game}]
\label{DefCwFWPG}
The category $\mathbb{WPG}_\oc$ gives rise to a CwF as follows:
\begin{itemize}

\item The terminal p-game $T \in \mathbb{WPG}_\oc$ in Example~\ref{ExPGames} forms a terminal object;

\item We define $\mathrm{Ty}(\Gamma) \colonequals \mathscr{D}^{\mathrm{w}}(\Gamma)$ ($\Gamma \in \mathbb{WPG}_\oc$) and $\mathrm{Tm}(\Gamma, A) \colonequals \mathbb{WPG}_\oc(\Pi(\Gamma, A))$ ($A \in \mathscr{D}^{\mathrm{w}}(\Gamma)$); 

\item Given a morphism $\phi : \Delta \rightarrow \Gamma$, we define $\_\{\phi\} : \mathrm{Ty}(\Gamma) \to \mathrm{Ty}(\Delta)$ by $|A\{ \phi \}| \colonequals |A|$ and $A \{ \phi \}(\delta_0^\dagger) \colonequals A(\phi^\dagger \bullet \delta_0)$ for all $A \in \mathrm{Ty}(\Gamma)$ and $\delta_0^\dagger \in \mathbb{WPG}_\oc(\oc \Delta)$, and define $\_\{\phi\}_A : \mathrm{Tm}(\Gamma, A) \to \mathrm{Tm}(\Delta, A\{\phi\})$ by $\alpha \{ \phi \}_A \colonequals \alpha \bullet \phi$ for all $\alpha \in \mathrm{Tm}(\Gamma, A)$; 


\item We define $\Gamma.A \colonequals \Sigma (\Gamma, A)$, $\mathrm{p}_A \colonequals \mathrm{der}_{|\Gamma|} : \Sigma (\Gamma, A) \rightarrow \Gamma$, $\mathrm{v}_A \colonequals \mathrm{der}_{|A|} : \Pi (\Sigma(\Gamma, A), A\{\mathrm{p}_A\})$ and $\langle \phi, \Check{\alpha} \rangle_A \colonequals  \langle \phi, \Check{\alpha} \rangle : \Delta \rightarrow \Sigma (\Gamma, A)$ ($\Check{\alpha} \in \mathrm{Tm}(\Delta, A\{ \phi \})$).


\end{itemize}

\end{theorem}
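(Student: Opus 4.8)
The plan is to verify directly that the specified tuple satisfies every clause of Definition~\ref{DefCwFs}, organised into three blocks: terminality of $T$; well-definedness of the assignments $\mathrm{Ty}$, $\mathrm{Tm}$, $\_\{\_\}$, the comprehension $\Sigma(\Gamma,A)$, the projections $\mathrm{p}_A,\mathrm{v}_A$ and the extension $\langle\_,\_\rangle_A$; and finally the eight defining equations. Throughout I would use that $\mathbb{WPG}_\oc$ is already a category (Definition~\ref{DefCategoriesOfGamesAndStrategies}), so that associativity of $\bullet$, neutrality of $\mathrm{der}$, and functoriality of promotion $(\_)^\dagger$ are available; and I would use Proposition~\ref{PropLivenessCharacterisation} systematically, since it converts an assertion about a strategy $\gamma:\Gamma$ on a p-game into one about its closure $\overline{\gamma}_\Gamma$ and the underlying game, thereby reducing the dependent claims to claims checkable on plain games.

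For terminality, the game underlying $\Gamma\Rightarrow T$ has no moves, so it carries only the empty strategy, which is vacuously winning and well-bracketed; hence there is a unique morphism $\Gamma\to T$. For well-definedness I would check that each construction stays inside its intended class. That $A\{\phi\}$, given by $A\{\phi\}(\delta_0^\dagger)\colonequals A(\phi^\dagger\bullet\delta_0)$, again lies in $\mathscr{D}^{\mathrm{w}}(\Delta)$ reduces to $\phi^\dagger\bullet\delta_0\in\mathbb{WPG}_\oc(\oc\Gamma)$, which holds because winning, well-bracketed strategies are closed under $\bullet$ and under promotion; that $\alpha\{\phi\}_A=\alpha\bullet\phi$ is again a term uses the same closure together with a check that $\alpha\bullet\phi$ still meets the fibrewise condition defining $\Pi(\Delta,A\{\phi\})$. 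The analogous facts for $\Sigma(\Gamma,A)$ and for $\mathrm{p}_A=\mathrm{der}_{|\Gamma|}$, $\mathrm{v}_A=\mathrm{der}_{|A|}$, $\langle\phi,\check\alpha\rangle_A$ follow by unwinding Definitions~\ref{DefSigma} and~\ref{DefLinearPiSpaces} and confirming, via Proposition~\ref{PropLivenessCharacterisation}, that the relevant pairings and derelictions land in the winning, well-bracketed fragment.

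The eight equations split cleanly. The unit laws $A\{\mathrm{id}_\Gamma\}=A$ and $\alpha\{\mathrm{id}_\Gamma\}_A=\alpha$ follow from the neutrality of dereliction, namely $\mathrm{der}_{|\Gamma|}^\dagger\bullet\delta_0=\delta_0^\dagger$ and $\alpha\bullet\mathrm{der}_{|\Gamma|}=\alpha$. The two functoriality laws $A\{\phi\circ\varphi\}=A\{\phi\}\{\varphi\}$ and $\alpha\{\phi\circ\varphi\}_A=\alpha\{\phi\}_A\{\varphi\}_{A\{\phi\}}$ reduce to the interchange identity $(\phi\bullet\varphi)^\dagger\bullet\theta=\phi^\dagger\bullet(\varphi^\dagger\bullet\theta)$, a consequence of the comonad law $(\phi\bullet\varphi)^\dagger=\phi^\dagger\circ\varphi^\dagger$ and associativity of composition (recalling that categorical composition in $\mathbb{WPG}_\oc$ is $\bullet$). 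For the comprehension block, $\mathrm{p}_A\circ\langle\phi,\check\alpha\rangle_A=\phi$ and $\mathrm{v}_A\{\langle\phi,\check\alpha\rangle_A\}=\check\alpha$ follow by computing $\mathrm{der}\bullet\langle\phi,\check\alpha\rangle$, whose copy-cat behaviour projects a pairing onto its first and second components; $\langle\phi,\check\alpha\rangle_A\circ\varphi=\langle\phi\circ\varphi,\check\alpha\{\varphi\}_{A\{\phi\}}\rangle_A$ is the naturality of pairing; and $\langle\mathrm{p}_A,\mathrm{v}_A\rangle_A=\mathrm{id}_{\Gamma.A}$ is the statement, read off from Definition~\ref{DefSigma}, that pairing the two projections recovers the identity. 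Each is a short computation once the underlying strategy identities are in place.

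The genuine obstacle is preservation of totality, hence of winning, under composition in the dependent setting. Even on plain games this is delicate: the composite of two total strategies can fail to respond because of an infinite internal interaction, and it is exactly the Noetherian condition of Definition~\ref{DefConstraintsOnStrategies} that rules this out. So the first technical lemma I would need is that the composite of winning, well-bracketed strategies is again winning and well-bracketed. Lifting this to p-games is where the dependency bites: the fibres $L^\star(\gamma)$ of a (linearly) dependent p-game vary with $\gamma$, and the definition of $\Pi_\ell(\Gamma,L)(\phi)$ quantifies over all $\gamma:\Gamma$ — existentially at O-moves and universally at P-moves. I would therefore have to show that composition and pairing interact correctly with these quantifiers, so that the closure of $\psi\bullet\phi$ meets every fibre it must and remains winning there. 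This fibrewise bookkeeping, invisible in the simply-typed case, is what carries the real weight of the theorem, and I expect it to consume most of the effort; by contrast, the eight equations are routine once the closure lemmas of the present section are in hand.
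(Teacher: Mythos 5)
The paper gives no proof of this statement to compare against: Theorem~\ref{DefCwFWPG} sits in the review section (\S\ref{Review}) and is recalled verbatim from the previous work \cite{yamada2022game}, with the proof left to that reference. Judged on its own terms, your plan is correct and is the natural reconstruction of that proof: a direct verification of every clause of Definition~\ref{DefCwFs}. Your terminality argument is sound (every position of $\Gamma \Rightarrow T$ must open with a move of $T$, of which there are none, so $\{\boldsymbol{\epsilon}\}$ is the unique strategy, vacuously winning and well-bracketed); your reduction of the eight equations to the comonad identities ($\mathrm{der}_{|\Gamma|}^\dagger$ acting as the identity on promotions, $(\phi \circ \varphi^\dagger)^\dagger = \phi^\dagger \circ \varphi^\dagger$), associativity of $\bullet$, and the projection/pairing computations is exactly right; and you correctly isolate the two genuinely technical inputs, namely preservation of winning (where noetherianness plus innocence rescues totality from infinite internal chatter) and well-bracketing under composition, promotion and pairing, and the fibrewise verification that substituted types and terms still satisfy the existential/universal quantifier conditions in Definition~\ref{DefLinearPiSpaces}. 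Note, though, that these two inputs are precisely the content this paper imports rather than proves: the well-definedness of $\mathbb{WPG}_\oc$ as a category (Definition~\ref{DefGameSemanticCategories} --- not Definition~\ref{DefCategoriesOfGamesAndStrategies}, which concerns plain games; a minor citation slip on your part) and the well-definedness of the pi and sigma constructions of Definitions~\ref{DefLinearPiSpaces} and~\ref{DefSigma}. So nothing in your outline would fail, but the part you defer as ``fibrewise bookkeeping'' is not a routine appendix to the argument: it is the substance of the cited proof, and a self-contained write-up would have to carry it out rather than gesture at it.
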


Given $\Gamma \in \mathbb{WPG}_\oc$ and $A \in \mathscr{D}^{\mathrm{w}}(\Gamma)$, we write $\mathbb{WPG}_\oc(\Gamma, A)$ for the set $\mathrm{Tm}(\Gamma, A)$ of all terms.
We often omit subscripts on components of $\mathbb{WPG}_\oc$ when they are evident. 

Strictly speaking, a CwF only interprets the core part of MLTT common to all types.
For interpreting One-, Zero-, N-, Pi-, Sigma- and Id-types, we need to equip the CwF $\mathbb{WPG}_\oc$ with \emph{semantic type formers} \cite{hofmann1997syntax} that interpret these types. 
In the following, we only sketch the game-semantic type formers on the CwF $\mathbb{WPG}_\oc$, leaving the general definition of semantic type formers to Hofmann \cite{hofmann1997syntax}.
Let us fix an objects $\Delta, \Gamma \in \mathbb{WPG}_\oc$ and types $A \in \mathscr{D}^{\mathrm{w}}(\Gamma)$ and $B \in \mathscr{D}^{\mathrm{w}}(\Sigma(\Gamma, A))$. 

\begin{theorem}[game semantics of Pi-types \cite{yamada2022game}]
\label{ThmGameSemanticsOfPiType}
$\mathbb{WPG}_\oc$ strictly supports Pi-types, where
\begin{itemize}

\item \textsc{($\Pi$-Form)} A dependent p-game $\Pi(A, B) \in \mathscr{D}^{\mathrm{w}}(\Gamma)$ is given by $|\Pi(A, B)| \colonequals |A| \Rightarrow |B|$ and $\Pi(A, B)(\gamma_0^\dagger) \colonequals \Pi(A(\gamma_0^\dagger), B_{\gamma_0^\dagger})$ for each $\gamma_0^\dagger \in \mathbb{WPG}_\oc(\oc \Gamma)$, and another dependent p-game $B_{\gamma_0^\dagger} \in \mathscr{D}^{\mathrm{w}}(A(\gamma_0^\dagger))$ by $|B_{\gamma_0^\dagger}| \colonequals |B|$ and $B_{\gamma_0^\dagger}(\alpha_0^\dagger) \colonequals B(\langle \gamma_0, \alpha_0 \rangle^\dagger)$ for each $\alpha_0^\dagger \in \mathbb{WPG}_\oc(\oc A(\gamma_0^\dagger))$.
We write $A \Rightarrow B$ for $\Pi(A, B)$ if $B_{\gamma_0^\dagger}$ is constant for each $\gamma_0^\dagger \in \mathbb{WPG}_\oc(\oc \Gamma)$.
Note that the equation 
\begin{equation}
\label{PiSubst}
\Pi(A, B)\{ \phi \} = \Pi(A\{ \phi \}, B\{ \phi_A^+ \})
\end{equation}
holds for each morphism $\phi : \Delta \rightarrow \Gamma$ (\textsc{$\Pi$-Subst}), where $\phi_A^+ \colonequals \langle \phi \bullet \mathrm{p}, \mathrm{v} \rangle : \Delta . A\{ \phi \} \rightarrow \Gamma . A$.
 
\item \textsc{($\Pi$-Intro)} Given a term $\beta \in \mathbb{WPG}_\oc(\Sigma(\Gamma, A), B)$, another term $\lambda_{A, B}(\beta) \in \mathbb{WPG}_\oc(\Gamma, \Pi(A, B))$ is obtained from $\beta$ by adjusting tags or \emph{currying} $\beta$ with respect to the adjunction between tensor $\otimes$ and linear implication $\multimap$ \cite{mccusker1998games} (thanks to the evident isomorphism $|\oc \Sigma(\Gamma, A)| = \oc (|\Gamma| \mathbin{\&} |A|) \cong \oc |\Gamma| \otimes \oc |A|$).
We often omit the subscripts $(\_)_{A, B}$ on $\lambda_{A, B}$ and the inverse $\lambda_{A, B}^{-1}$.

\item \textsc{($\Pi$-Elim)} We define $\mathrm{App}_{A, B} (\kappa, \alpha) \colonequals \lambda_{A, B}^{-1}(\kappa) \{\overline{\alpha} \} \in \mathbb{WPG}_\oc(\Gamma, B\{ \overline{\alpha} \})$ for all $\kappa \in \mathbb{WPG}_\oc(\Gamma, \Pi(A, B))$ and $\alpha \in \mathbb{WPG}_\oc(\Gamma, A)$. 
We often omit the subscripts $(\_)_{A, B}$ on $\mathrm{App}_{A, B}$.

\end{itemize}

\end{theorem}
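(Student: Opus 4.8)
The plan is to check the three data of the theorem together with the CwF-level equations that Hofmann's notion of a \emph{strict} support for Pi-types requires: beyond the formation p-game $\Pi(A,B)$ and the operators $\lambda_{A,B}$ and $\mathrm{App}_{A,B}$, one must verify that $\Pi(A,B) \in \mathscr{D}^{\mathrm{w}}(\Gamma)$ and the substitution law~\eqref{PiSubst} hold, that $\lambda_{A,B}$ is a bijection onto $\mathbb{WPG}_\oc(\Gamma, \Pi(A,B))$ whose inverse drives $\mathrm{App}$ (the $\beta$- and $\eta$-conversions), and that $\lambda_{A,B}$ and $\mathrm{App}$ commute with substitution. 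My overall strategy is to reduce each clause to the single underlying-game isomorphism $\oc(|\Gamma| \mathbin{\&} |A|) \cong \oc|\Gamma| \otimes \oc|A|$ together with the tensor--linear-implication adjunction of \cite{mccusker1998games}, and then to argue that the resulting retagging of positions preserves all the superstructure, namely the liveness test of Definition~\ref{DefStrategiesOnPredicateGames} and the winning and well-bracketing conditions of Definition~\ref{DefConstraintsOnStrategies}.

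\emph{Formation and its substitution law.} First I would confirm that $\Pi(A,B)$ is a well-founded, well-opened dependent p-game. Its underlying game $|A| \Rightarrow |B| = \oc|A| \multimap |B|$ inherits well-foundedness and well-openedness from $|A|$ and $|B|$ since the constructions $\oc$ and $\multimap$ of Definition~\ref{DefConstructionsOnGames} preserve them. For each $\gamma_0^\dagger \in \mathbb{WPG}_\oc(\oc\Gamma)$ the fibre $\Pi(A(\gamma_0^\dagger), B_{\gamma_0^\dagger})$ is a legitimate p-game by Definition~\ref{DefLinearPiSpaces}, but this presupposes $B_{\gamma_0^\dagger} \in \mathscr{D}^{\mathrm{w}}(A(\gamma_0^\dagger))$, which I would read off from $|B_{\gamma_0^\dagger}| = |B|$ and $B_{\gamma_0^\dagger}(\alpha_0^\dagger) = B(\langle\gamma_0,\alpha_0\rangle^\dagger)$, using that $\langle\gamma_0,\alpha_0\rangle^\dagger \in \mathbb{WPG}_\oc(\oc\Sigma(\Gamma,A))$ by Definition~\ref{DefSigma}. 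The law~\eqref{PiSubst} is then an equality of p-games whose two underlying games already agree because substitution on types fixes the underlying game ($|A\{\phi\}| = |A|$), so that it remains to match the fibre families: unfolding $\_\{\phi\}$ from Theorem~\ref{DefCwFWPG} gives $\Pi(A,B)\{\phi\}(\delta_0^\dagger) = \Pi(A,B)(\phi^\dagger \bullet \delta_0) = \Pi\big(A(\phi^\dagger \bullet \delta_0), B_{\phi^\dagger \bullet \delta_0}\big)$, and comparing this with the right-hand side after expanding $\phi_A^+ = \langle \phi \bullet \mathrm{p}, \mathrm{v}\rangle$ reduces everything to the identity $B_{\phi^\dagger \bullet \delta_0} = B\{\phi_A^+\}_{\delta_0^\dagger}$ of fibres.

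\emph{Introduction, elimination, and conversions.} The map $\lambda_{A,B}$ is the currying retagging, which at the level of underlying games is the classical bijection $\mathrm{st}\big(\oc(|\Gamma| \mathbin{\&} |A|) \multimap |B|\big) \cong \mathrm{st}\big(\oc|\Gamma| \multimap (\oc|A| \multimap |B|)\big)$. Because this retagging is a position-wise bijection that preserves P-views, it preserves innocence, totality, noetherianity and well-bracketing, hence sends winning well-bracketed strategies to winning well-bracketed strategies; and since it acts fibrewise it preserves the liveness test, so by Proposition~\ref{PropLivenessCharacterisation} it restricts to a bijection $\mathbb{WPG}_\oc(\Sigma(\Gamma,A), B) \cong \mathbb{WPG}_\oc(\Gamma, \Pi(A,B))$. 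Its inverse is precisely the uncurrying appearing in $\mathrm{App}_{A,B}(\kappa,\alpha) = \lambda_{A,B}^{-1}(\kappa)\{\overline{\alpha}\}$, whose codomain $\mathbb{WPG}_\oc(\Gamma, B\{\overline{\alpha}\})$ is correct because $\{\overline{\alpha}\}$ is the substitution along the extension $\overline{\alpha} : \Gamma \to \Sigma(\Gamma,A)$ already supplied by the CwF of Theorem~\ref{DefCwFWPG}. The $\beta$- and $\eta$-laws then drop out of $\lambda_{A,B}$ and $\lambda_{A,B}^{-1}$ being mutually inverse, combined with the substitution equations of the CwF.

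\emph{The main obstacle.} The delicate point is not any individual construction but the requirement that the equations hold \emph{strictly}, i.e.\ as equalities rather than mere isomorphisms. The sharpest instance is the substitution-stability of abstraction, $\lambda_{A,B}(\beta)\{\phi\} = \lambda_{A\{\phi\}, B\{\phi_A^+\}}(\beta\{\phi_A^+\})$, which demands that currying (a pure retagging) and the substitution functor (composition with $\phi^\dagger$, followed by the closure operators $(\_)^{\overline{\gamma}_\Gamma}$ of Definition~\ref{DefLinearPiSpaces}) commute on the nose. I expect the crux to be a bookkeeping argument showing that the two reassociations of tags yield literally the same set of even-length positions. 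The reason this can succeed strictly, rather than only up to isomorphism, is exactly the \emph{gradual revelation} of type dependency highlighted in the footnote of \S\ref{Solution}: the closure $(\_)^{\overline{\gamma}_\Gamma}$ consults only the finite current view and therefore interleaves transparently with a position-wise retagging. Carrying this compatibility through uniformly over all $\gamma : \Gamma$, while confirming that the liveness test survives each step, is where the genuine effort of the proof will concentrate.
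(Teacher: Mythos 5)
First, a point of calibration: this theorem sits in the paper's review section (\S\ref{GameSemanticsOfMLTT}) and is imported from \cite{yamada2022game} without proof, so your attempt can only be judged against what such a proof must establish relative to Definitions~\ref{DefLinearPiSpaces}, \ref{DefSigma} and Theorem~\ref{DefCwFWPG}. Your skeleton is the right one: well-definedness of the formation clause, \textsc{$\Pi$-Subst} by unfolding the substitution action of Theorem~\ref{DefCwFWPG} fibrewise (your reduction of equation~\eqref{PiSubst} to the fibre identity $B_{\phi^\dagger \bullet \delta_0} = B\{\phi_A^+\}_{\delta_0^\dagger}$ is exactly right), and the currying retagging for intro/elim. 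One slip en route: the exponential $\oc$ of Definition~\ref{DefConstructionsOnGames} does \emph{not} preserve well-openedness ($\oc G$ has positions with many initial occurrences); $|A| \Rightarrow |B| = \oc|A| \multimap |B|$ is well-opened only because the initial moves of a linear implication are those of its codomain $|B|$.

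The genuine gap is in the intro/elim clause, precisely where you park the difficulty. You treat $\lambda_{A,B}$ as a ``position-wise retagging that acts fibrewise'', but the fibres of a pi are not fibrewise data that a retagging can transport move by move: by Definition~\ref{DefLinearPiSpaces}, the fibre $\Pi(\Sigma(\Gamma,A),B)(\beta)$ is carved out by an $\exists$/$\forall$ test quantifying over \emph{all} strategies $\sigma$ on the p-game $\oc\Sigma(\Gamma,A)$ --- including non-winning ones, and ones whose threads are distinct pairings --- mediated by the extension $(\_)^\star$ of Definition~\ref{DefDependentPredicateGames}, which returns the true fibre only when $\sigma \in \mathbb{WPG}_\oc(\oc\Sigma(\Gamma,A))$ and the trivial p-game otherwise, all relativised by linear implication from the closures $\overline{\sigma}$. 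On the curried side the test is \emph{nested}: an outer quantification over strategies $\tau$ on $\oc\Gamma$ feeding into $\Pi(A,B)^\star(\tau)$, which is itself a pi whose own fibres quantify over strategies on $\oc A(\tau)$. A proof must therefore (i) match single strategies on $\oc\Sigma(\Gamma,A)$ against pairs $(\tau,\rho)$ --- here the fact recorded after Definition~\ref{DefDependentPredicateGames}, that winning forces innocence so that elements of $\mathbb{WPG}_\oc(\oc\Sigma(\Gamma,A))$ are promotions $\langle\gamma_0,\alpha_0\rangle^\dagger$, is what keeps the $(\_)^\star$ case distinctions synchronised on the two sides --- and (ii) check that the non-winning $\sigma$, whose fibres degenerate to $\mathscr{P}(|B|)$, impose matching (trivial) constraints, so that the two sets of positions are literally equal. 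Your heuristic that the closure ``consults only the finite current view'' addresses neither point: the issue is not bookkeeping of tags but the interplay of the winning/well-bracketing side conditions with the quantification in the dependency test. That verification is the actual content of the theorem in \cite{yamada2022game}; without it, what you have is a correct plan rather than a proof.
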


\begin{theorem}[game semantics of Sigma-types \cite{yamada2022game}]
\label{ThmGameSemanticsOfSigmaType}
$\mathbb{WPG}_\oc$ strictly supports Sigma-types, where

\begin{itemize}

\item \textsc{($\Sigma$-Form)} Similarly to Pi-types, we define $\Sigma (A, B) \colonequals (|A| \mathbin{\&} |B|, (\Sigma(A(\gamma_0^\dagger), B_{\gamma_0^\dagger}))_{\gamma_0^\dagger \in \mathbb{WPG}_\oc(\oc \Gamma)})$.
We write $A \mathbin{\&} B$ for $\Sigma(A, B)$ if $B_{\gamma_0^\dagger}$ is constant for each $\gamma_0^\dagger \in \mathbb{WPG}_\oc(\oc \Gamma)$.

\item \textsc{($\Sigma$-Intro)} By the evident bijection $\Sigma(\Sigma(\Gamma,A),B) \cong \Sigma(\Gamma, \Sigma(A,B))$, we define a morphism $\mathrm{Pair}_{A, B} \colonequals \langle \mathrm{p}_A \bullet \mathrm{p}_B, \langle \mathrm{v}_A \{ \mathrm{p}_B \}, \mathrm{v}_B \rangle \rangle : \Sigma(\Sigma(\Gamma,A),B) \stackrel{\sim}{\rightarrow} \Sigma(\Gamma, \Sigma(A,B))$. 

\item \textsc{($\Sigma$-Elim)} Given a term $\rho \in \mathbb{WPG}_\oc(\Sigma(\Sigma(\Gamma, A), B), P\{\mathrm{Pair}_{A,B}\})$, we define another term $\mathcal{R}^{\Sigma}_{A, B, P}(\rho) \colonequals \rho \{ \mathrm{Pair}_{A, B}^{-1} \} \in \mathbb{WPG}_\oc(\Sigma(\Gamma, \Sigma (A, B)), P\{\mathrm{Pair}_{A,B}\}\{\mathrm{Pair}_{A,B}^{-1}\}) = \mathbb{WPG}_\oc(\Sigma(\Gamma, \Sigma (A, B)), P)$.

\end{itemize}
\end{theorem}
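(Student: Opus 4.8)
The plan is to verify the three type-former components (\textsc{$\Sigma$-Form}, \textsc{$\Sigma$-Intro}, \textsc{$\Sigma$-Elim}) together with their stability under substitution, in the sense of Hofmann's semantic type formers \cite{hofmann1997syntax}. The guiding observation is that the Sigma type-former is defined \emph{fibrewise} through the sigma construction on p-games (Definition~\ref{DefSigma}), so that most of the work reduces to the already-established properties of that construction and to the CwF equations of Theorem~\ref{DefCwFWPG}; moreover, the elimination rule is reduced to transport of a term along the associativity isomorphism $\mathrm{Pair}_{A,B}$, which lets the computation rule fall out of the CwF substitution laws rather than requiring a separate combinatorial argument.

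First I would treat \textsc{$\Sigma$-Form}. I would check that $\Sigma(A,B)$ is a genuine element of $\mathscr{D}^{\mathrm{w}}(\Gamma)$: for each $\gamma_0^\dagger \in \mathbb{WPG}_\oc(\oc\Gamma)$ the fibre $\Sigma(A(\gamma_0^\dagger), B_{\gamma_0^\dagger})$ is a well-defined p-game by Definition~\ref{DefSigma}, and the underlying game $|A| \mathbin{\&} |B|$ is well-opened and well-founded because $|A|$ and $|B|$ are and product $\mathbin{\&}$ preserves both properties. I would then establish the substitution equation $\Sigma(A,B)\{\phi\} = \Sigma(A\{\phi\}, B\{\phi_A^+\})$ for every $\phi : \Delta \rightarrow \Gamma$ by unwinding the definition of $\_\{\phi\}$ from Theorem~\ref{DefCwFWPG} fibrewise, exactly parallel to the Pi case (equation~(\ref{PiSubst}) of Theorem~\ref{ThmGameSemanticsOfPiType}); the two sides agree on underlying games by construction, and on fibres because substitution acts only by the reindexing $\gamma_0^\dagger \mapsto \phi^\dagger \bullet \gamma_0$, which commutes with the fibrewise sigma.

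The heart of the argument is \textsc{$\Sigma$-Intro}: showing that $\mathrm{Pair}_{A,B}$ is an isomorphism in $\mathbb{WPG}_\oc$. At the level of underlying games the required bijection is the associativity of product, $(|\Gamma| \mathbin{\&} |A|) \mathbin{\&} |B| \cong |\Gamma| \mathbin{\&} (|A| \mathbin{\&} |B|)$, and I would exhibit $\mathrm{Pair}_{A,B}$ concretely as the given pairing $\langle \mathrm{p}_A \bullet \mathrm{p}_B, \langle \mathrm{v}_A\{\mathrm{p}_B\}, \mathrm{v}_B\rangle\rangle$ built from derelictions; its inverse is the symmetric pairing, and I would check $\mathrm{Pair}_{A,B} \bullet \mathrm{Pair}_{A,B}^{-1} = \mathrm{id}$ and conversely by a copy-cat computation. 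The nontrivial part is to lift this from bare games to p-games: one must verify that the bijection carries the iterated fibre $(\Gamma(\gamma) \mathbin{\&} A^\star(\gamma^\dagger)(\alpha)) \mathbin{\&} B^\star(\langle\gamma,\alpha\rangle^\dagger)(\beta)$ on the left to the matching fibre $\Gamma(\gamma) \mathbin{\&} (A^\star(\gamma^\dagger)(\alpha) \mathbin{\&} B^\star(\langle\gamma,\alpha\rangle^\dagger)(\beta))$ on the right via associativity of $\mathbin{\&}$, and that the resulting strategies remain winning and well-bracketed (inherited from the derelictions and pairings through Definition~\ref{DefStrategiesOnPredicateGames} and the closure properties of Definition~\ref{DefConstructionsOnStrategies}).

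Finally, for \textsc{$\Sigma$-Elim} I would observe that once $\mathrm{Pair}_{A,B}$ is an isomorphism, substitution by its inverse sends $\rho \in \mathbb{WPG}_\oc(\Sigma(\Sigma(\Gamma,A),B), P\{\mathrm{Pair}_{A,B}\})$ to a term of type $P\{\mathrm{Pair}_{A,B}\}\{\mathrm{Pair}_{A,B}^{-1}\}$, and the contravariant functoriality of substitution together with $\mathrm{Pair}_{A,B} \circ \mathrm{Pair}_{A,B}^{-1} = \mathrm{id}$ and $P\{\mathrm{id}\} = P$ (Definition~\ref{DefCwFs}) collapses this type to $P$, as claimed; the associated computation rule then follows from the same CwF equations applied to the canonical pairing term. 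I expect the main obstacle to be precisely the dependent lift of the associativity isomorphism in \textsc{$\Sigma$-Intro}: establishing that the product-associativity bijection respects the predicate (fibre) structure \emph{on the nose}, so that the type equations hold strictly rather than merely up to isomorphism, while simultaneously confirming that the transported strategies remain winning and well-bracketed.
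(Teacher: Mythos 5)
Your proposal is correct and takes essentially the same route as the paper, which recalls this theorem from \cite{yamada2022game} without reproving it: the statement itself already encodes your argument, namely the fibrewise sigma of p-games for \textsc{($\Sigma$-Form)} with substitution stability parallel to equation~(\ref{PiSubst}), the associativity isomorphism $\mathrm{Pair}_{A, B}$ lifted from $(|\Gamma| \mathbin{\&} |A|) \mathbin{\&} |B| \cong |\Gamma| \mathbin{\&} (|A| \mathbin{\&} |B|)$ to the predicate structure for \textsc{($\Sigma$-Intro)}, and \textsc{($\Sigma$-Elim)} collapsing $P\{\mathrm{Pair}_{A,B}\}\{\mathrm{Pair}_{A,B}^{-1}\}$ to $P$ via the CwF substitution laws. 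Your identification of the delicate point --- that the associativity bijection must carry the fibres $(\Gamma(\gamma) \mathbin{\&} A^\star(\gamma^\dagger)(\alpha)) \mathbin{\&} B^\star(\langle\gamma,\alpha\rangle^\dagger)(\beta)$ to the matching fibres on the nose while preserving winning and well-bracketing --- is exactly where the substance of the cited verification lies.
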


\begin{theorem}[game semantics of atomic types \cite{yamada2022game}]
\label{ThmGameSemanticsOfSigmaType}
$\mathbb{WPG}_\oc$ supports One-, Zero- and N-types, where their formation rules are given by constant dependent p-gams at the terminal p-game $T$, the empty p-game $0$ and the natural number p-game $N$, for which we write $1$, $0$ and $N$, respectively
\end{theorem}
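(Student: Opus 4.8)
The plan is to equip the CwF of Theorem~\ref{DefCwFWPG} with the semantic type-former data of Hofmann \cite{hofmann1997syntax} for each of the three types and to verify the defining equations. The formation rules are fixed to be the \emph{constant} dependent p-games $1 = \{ T \}$, $0 = \{ 0 \}$ and $N = \{ N \}$; these are well-opened and well-founded because $T$, $0$ and $N$ are, so they lie in $\mathscr{D}^{\mathrm{w}}(\Gamma)$, and their stability under substitution is immediate: for every morphism $\phi$ and $X \in \{ 1, 0, N \}$ one has $X\{\phi\}(\delta_0^\dagger) = X(\phi^\dagger \bullet \delta_0) = X'$ with the fixed value $X' \in \{ T, 0, N \}$, whence $X\{\phi\} = X$. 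It therefore remains to supply introduction, elimination and computation data and to check the CwF equations; I treat One- and Zero-types first and reserve the bulk of the effort for N-type.

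For One-type the decisive point is that $|T| = \{ \boldsymbol{\epsilon} \}$ has no move, so the comprehension $\Sigma(\Gamma, \{ T \})$ has underlying game $|\Gamma| \mathbin{\&} T \cong |\Gamma|$ and is canonically isomorphic to $\Gamma$ in $\mathbb{WPG}_\oc$. The introduction term inhabits $\mathrm{Tm}(\Gamma, 1) = \mathbb{WPG}_\oc(\Pi(\Gamma, \{ T \}))$, whose underlying game $\oc|\Gamma| \multimap T$ has no move and hence the unique strategy $\top$; this is trivially winning and well-bracketed, and its uniqueness also yields the $\eta$-rule. The eliminator is obtained by transporting the given branch term along the isomorphism $\Gamma.1 \cong \Gamma$, and the $\beta$-rule reduces to functoriality of this transport, which holds by the equations of Theorem~\ref{DefCwFWPG}. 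For Zero-type the pivotal fact is that $0 = \mathrm{flat}(\emptyset)$ carries the question $q$ but no answer, so its only strategy $\bot$ fails totality; in particular $\Gamma.0 = \Sigma(\Gamma, \{ 0 \})$ has no winning point. The eliminator $\mathcal{R}^0_P \in \mathrm{Tm}(\Gamma.0, P)$ is defined to answer each opening move of Opponent in $|P|$ by interrogating a fresh copy of the $0$-component of the domain $\oc\Sigma(\Gamma, \{ 0 \})$ with $q$; it is total because the only O-moves are such openings and, once Player has played $q$ into $\mathrm{flat}(\emptyset)$, Opponent cannot continue that thread, and it is innocent, noetherian and well-bracketed by inspection.

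The substantive case is N-type. Introduction is given by the zero term $\underline{0} \in \mathrm{Tm}(\Gamma, N)$ and the successor term $\mathrm{succ} \in \mathrm{Tm}(\Gamma.N, N\{ \mathrm{p} \})$, the latter reading the number in the N-component of the context by dereliction and returning its successor, as in \S\ref{ExamplesOfGamesAndStrategies}. For the eliminator, given a motive $P \in \mathscr{D}^{\mathrm{w}}(\Gamma.N)$, a base term $c_z$ for $P$ at $\underline{0}$ and a step term $c_s$ for $P$ at the successor, I would define $\mathcal{R}^N_P(c_z, c_s) \in \mathrm{Tm}(\Gamma.N, P)$ to be the strategy that first reads the argument $n$ off the N-component of $\oc\Sigma(\Gamma, \{ N \})$ by a single copy-cat interrogation and then unfolds the finite iteration of $c_s$ of length $n$ starting from $c_z$, threading each intermediate value through the appropriate promoted copy so that the dependency of $P$ on the recursion index is respected. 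The $\beta$-rule at zero equates this strategy, restricted to the argument $\underline{0}$, with $c_z$, and the $\beta$-rule at successor equates it, restricted to a successor argument, with one application of $c_s$ to the recursor at the predecessor; both are verified by tracing the hiding of synchronisation moves in the composition, exactly as for $\mathrm{double} \bullet \mathrm{succ}$ in \S\ref{ExamplesOfGamesAndStrategies}.

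The hard part will be the N-eliminator, on two counts. First, since $P$ is a genuinely dependent p-game, the unfolding strategy must re-index $P$ at each iteration step, and one must check that it remains \emph{innocent} and \emph{noetherian} in the sense of Definition~\ref{DefStrategiesOnPredicateGames}; noetherianity is precisely the guarantee that, after a single finite read of $n$, the iteration terminates. Second, the successor $\beta$-rule is the one place where a nontrivial computation of composition and promotion is unavoidable, as one must show that the length-$(n{+}1)$ unfolding coincides after hiding with a single step applied to the length-$n$ unfolding. Finally, I would derive substitution-stability of both eliminators from the constancy of the formation together with naturality of the argument-reading and \textsc{$\Pi$-Subst} of Theorem~\ref{ThmGameSemanticsOfPiType}.
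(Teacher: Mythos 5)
Your proposal cannot be checked against a proof in this paper, because the paper does not prove this theorem: it is recalled verbatim from the previous work \cite{yamada2022game}, and the only mathematical content the paper itself commits to is the formation data, namely that One-, Zero- and N-types are interpreted by the \emph{constant} dependent p-games at $T$, $0$ and $N$. On that part you agree with the paper exactly, including the (correct) observation that constancy makes stability under substitution, $X\{\phi\} = X$, immediate. Everything else in your proposal is a reconstruction of material the paper delegates to the citation, so the comparison is between your sketch and the cited work rather than anything printed here.

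As a reconstruction, your route is the standard one and its ingredients are sound. The One-type analysis is right: $\oc|\Gamma| \multimap T$ has no positions besides $\boldsymbol{\epsilon}$ (a position of a linear implication must open in the codomain), so $\mathrm{Tm}(\Gamma, 1)$ is a singleton, and $\Gamma.1 \cong \Gamma$ gives elimination by transport. Your Zero-type eliminator also works, and for the reason you give: after Player answers the (unique, by well-openedness of $|P|$) initial O-question with the question $q$ of the $\mathrm{flat}(\emptyset)$-component, Opponent has no legal move at all --- answers to $q$ do not exist, non-initial O-moves of $|P|$ lack justifiers, and all remaining domain moves are P-moves --- so totality is vacuous and well-bracketing holds because no answer is ever played. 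What keeps your text a plan rather than a proof is precisely the part you flag yourself: the N-eliminator. There you must verify not only the $\beta$-rules by hiding, but that the read-then-iterate strategy is a strategy \emph{on the dependent pi p-game} in the sense of Definitions~\ref{DefLinearPiSpaces} and~\ref{DefStrategiesOnPredicateGames} (liveness with respect to the $\exists/\forall$-quantified component games), and that it is innocent and noetherian --- noetherianity hinging on the fact that the value read from the N-component remains in subsequent P-views, so that P-views for distinct arguments branch rather than chain. Those verifications, together with the substitution coherences for the eliminators, are exactly the content supplied by \cite{yamada2022game} and asserted without proof here; your proposal identifies them correctly but does not carry them out.
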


\begin{theorem}[game semantics of Id-types \cite{yamada2022game}]
\label{LemGameSemanticIdTypes}
$\mathbb{WPG}_\oc$ supports Id-types, where
\begin{itemize}

\item \textsc{(Id-Form)} Let $T' \colonequals \mathrm{flat}(\{ \mathbin{\surd} \})$ (Example~\ref{ExamplesOfGames}), where $\mathbin{\surd}$ is any element.
We define a dependent p-game $\mathrm{Id}_A \in \mathscr{D}^{\mathrm{w}}(\Sigma(\Sigma(\Gamma, A), A^+))$ by $|\mathrm{Id}_A| \colonequals T'$ and $\mathrm{Id}_A(\langle \langle \gamma_0, \alpha_0 \rangle, \alpha_0' \rangle^\dagger) \colonequals \begin{cases} (T', \kappa_{T'}) &\text{if $\alpha_0 = \alpha_0'$;} \\ (T', \kappa_{0}) &\text{otherwise,} \end{cases}$ for all $\langle \langle \gamma_0, \alpha_0 \rangle, \alpha_0' \rangle^\dagger \in \mathbb{WPG}_\oc(\oc \Sigma(\Sigma(\Gamma, A), A^+))$, where $\kappa_{X}$ is the constant family at a game $X$.

\item \textsc{(Id-Intro)} Let $\mathrm{Refl}_A \colonequals \langle \overline{\mathrm{v}_A}, \mathrm{refl}_A \rangle \in \mathbb{WPG}_\oc(\Sigma(\Gamma, A), \Sigma(\Sigma(\Sigma(\Gamma, A), A^+), \mathrm{Id}_A))$, where $\mathrm{refl}_A \in \mathbb{WPG}_\oc(\Sigma(\Gamma, A), \mathrm{Id}_A\{ \overline{\mathrm{v}_A} \})$ is $\underline{\surd} : T'$ (Example~\ref{ExamplesOfGames}) up to tags.


\end{itemize}

\end{theorem}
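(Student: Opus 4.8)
The plan is to verify that the displayed data equip $\mathbb{WPG}_\oc$ with semantic Id-type structure in the sense of Hofmann: that $\mathrm{Id}_A$ is a well-defined, well-founded, well-opened dependent p-game stable under substitution, and that $\mathrm{Refl}_A$ is a genuine term of $\mathbb{WPG}_\oc$ that is likewise stable under substitution. First I would dispatch the well-definedness in (Id-Form). Since $|\mathrm{Id}_A| \colonequals T' = \mathrm{flat}(\{ \surd \})$ is flat, it is automatically well-founded and well-opened, so by Definition~\ref{DefDependentPredicateGames} it remains only to confirm that the family $\| \mathrm{Id}_A \|$ is legitimate. The sole point is that both branches $(T', \kappa_{T'})$ and $(T', \kappa_0)$ carry the same underlying game $T'$, so the coherence requirement $|\mathrm{Id}_A(\xi_0)| = |\mathrm{Id}_A|$ holds for every index $\xi_0 = \langle \langle \gamma_0, \alpha_0 \rangle, \alpha_0' \rangle^\dagger$; and since the context $\Sigma(\Sigma(\Gamma, A), A^+)$ exposes $\alpha_0$ and $\alpha_0'$ as two strategies on one and the same game $A(\gamma_0^\dagger)$ (as $A^+$ is the weakening of $A$), the test $\alpha_0 = \alpha_0'$ is well-posed.

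Next I would establish the substitution law stating that $\mathrm{Id}_A\{ \phi^{++} \} = \mathrm{Id}_{A\{\phi\}}$ for every morphism $\phi : \Delta \rightarrow \Gamma$, where $\phi^{++}$ is the iterated context extension obtained by applying the substitution extension $(\_)_A^+$ of Theorem~\ref{ThmGameSemanticsOfPiType} twice, so that the two $A$-coordinates are carried along as variables. Both sides have underlying game $T'$, so the claim reduces to matching the two case-split families. Unfolding the CwF substitution clause $A\{\phi\}(\delta_0^\dagger) = A(\phi^\dagger \bullet \delta_0)$, the left-hand family at an index $\langle \langle \delta_0, \beta_0 \rangle, \beta_0' \rangle^\dagger$ is $\mathrm{Id}_A$ evaluated at $(\phi^{++})^\dagger \bullet \langle \langle \delta_0, \beta_0 \rangle, \beta_0' \rangle$; because $\phi^{++}$ acts as the identity on the last two coordinates, that triple has its second and third components equal to $\beta_0$ and $\beta_0'$ themselves, so the predicate $\alpha_0 = \alpha_0'$ is preserved on the nose and the two families coincide.

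Then I would verify (Id-Intro). The section $\overline{\mathrm{v}_A}$ is the diagonal $\langle \gamma_0, \alpha_0 \rangle \mapsto \langle \langle \gamma_0, \alpha_0 \rangle, \alpha_0 \rangle$ that duplicates the $A$-component into the $A^+$-slot, so it forces the third coordinate to equal the second; hence $\mathrm{Id}_A\{ \overline{\mathrm{v}_A} \}$ falls into the equal branch and is the constant p-game $(T', \kappa_{T'})$. It then remains to check that $\mathrm{refl}_A \colonequals \underline{\surd} = \{ \boldsymbol{\epsilon}, q\surd \}$ is a valid term there: the liveness test $\overline{\underline{\surd}}_{T'} \preccurlyeq T'$ holds trivially by Proposition~\ref{PropLivenessCharacterisation}, and $\underline{\surd}$ is patently total, innocent, noetherian and well-bracketed, hence winning and well-bracketed. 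The pairing $\mathrm{Refl}_A = \langle \overline{\mathrm{v}_A}, \mathrm{refl}_A \rangle$ (Definition~\ref{DefConstructionsOnStrategies}) is then a morphism, and its stability under substitution follows from the naturality of pairing together with the diagonal character of $\overline{\mathrm{v}_A}$.

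The main obstacle within the displayed rules is exactly this substitution coherence: everything else is forced by the degeneracy of $T'$, whereas matching the case-split families after substitution hinges on the extension $\phi^{++}$ transporting the two variable coordinates identically, so that equality is neither created nor destroyed. Should the full assertion that $\mathbb{WPG}_\oc$ \emph{supports} Id-types also require the eliminator and its computation law, as for Pi- and Sigma-types above, the genuinely hard step would be the J-rule: building, from a motive and a reflexivity-case term, a winning strategy on the pi-game over $\mathrm{Id}_A$ that is defined uniformly in the index, exploiting that $\mathrm{Id}_A$ collapses to the empty game $0$ off the diagonal so that winning strategies live only where $\alpha_0 = \alpha_0'$, with the computation law $\mathcal{J}(d) \bullet \mathrm{Refl}_A = d$ read off from the single forced play $q\surd$ of the identity proof. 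I would handle that by a copy-cat relaying the reflexivity-case behaviour, using Proposition~\ref{PropLivenessCharacterisation} to confine attention to the diagonal.
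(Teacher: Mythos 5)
The paper itself contains no proof of this theorem: it is recalled verbatim from the previous work \cite{yamada2022game}, and the displayed bullets \emph{are} the defining data, so the only content to supply is exactly the definitional verification you perform. Your three checks are sound and are the ones the paper implicitly relies on: well-definedness of $\mathrm{Id}_A$ (both branches share the underlying game $T'$, which is flat, hence well-opened and well-founded); the substitution law $\mathrm{Id}_A\{\phi^{++}\} = \mathrm{Id}_{A\{\phi\}}$, which holds precisely because $\phi_A^+ = \langle \phi \bullet \mathrm{p}, \mathrm{v} \rangle$ transports the two variable coordinates identically, so the test $\alpha_0 = \alpha_0'$ is neither created nor destroyed; and $\mathrm{Id}_A\{\overline{\mathrm{v}_A}\}$ being constant at $(T', \kappa_{T'})$, on which $\underline{\surd}$ is evidently a winning, well-bracketed term. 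Indeed, the paper's own derivation of \textsc{Id'-Subst} in \S\ref{UniversePredicativeGames} presupposes exactly your substitution computation, so your proposal matches the intended (uncited) argument.

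Two points of precision. First, off the diagonal $\mathrm{Id}_A$ does \emph{not} collapse to the empty game $0$: it is the p-game $(T', \kappa_0)$, whose underlying game is still $T'$. What fails there is \emph{totality}: the only strategy passing the liveness test $\overline{\gamma}_{T'} \preccurlyeq 0$ is $\bot$, which cannot answer the O-move $q$ of $0$, so no winning strategy exists. Your conclusion that winning strategies live only on the diagonal is correct, but for this reason, not because the game is literally $0$. Second, supporting Id-types in Hofmann's sense does include the eliminator and its computation rule, which both your main argument and the paper's display leave implicit; your conditional sketch does close, and more simply than a bespoke copy-cat: take $\mathcal{J}(d) \colonequals d\{\mathrm{p}_{A^+} \circ \mathrm{p}_{\mathrm{Id}_A}\}$. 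This is a term of type $P$ (and not merely of type $P\{\mathrm{Refl}_A \circ \mathrm{p}_{A^+} \circ \mathrm{p}_{\mathrm{Id}_A}\}$) because the pi p-game of Definition~\ref{DefLinearPiSpaces} only consults the extension $P^\star$ of Definition~\ref{DefDependentPredicateGames}, which trivialises at non-winning indices, and every \emph{winning} index of $\oc\Sigma(\Sigma(\Sigma(\Gamma, A), A^+), \mathrm{Id}_A)$ is diagonal, where $\mathrm{Refl}_A \circ \mathrm{p}_{A^+} \circ \mathrm{p}_{\mathrm{Id}_A}$ acts as the identity; the computation law $\mathcal{J}(d)\{\mathrm{Refl}_A\} = d$ then follows from the CwF equation $\mathrm{p}_A \circ \langle \phi, \alpha \rangle_A = \phi$ applied twice.
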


\begin{example}
\label{ExCrucialExample}
Consider the interpretation of the Id-type $\mathsf{f : N \Rightarrow N, g : N \Rightarrow N \vdash Id_{N \Rightarrow N}(f, g) \ type}$ in the CwF $\mathbb{WPG}_\oc$ \cite{yamada2022game}, which is the p-game $\Pi((N \Rightarrow N) \mathbin{\&} (N \Rightarrow N), \mathrm{Id}_{N \Rightarrow N}(\pi_1, \pi_2))$.
The component of the codomain $\mathrm{Id}_{N \Rightarrow N}(\pi_1, \pi_2)$ is in general not decidable because any play in this p-game can observe only \emph{finite} information about two input strategies on the domain.
Nevertheless, this is not a problem because the codomain component of each pi (Definition~\ref{DefLinearPiSpaces}) is specified \emph{only gradually} (and often incompletely) along the gradual (and often incomplete) disclosure of input strategies on the domain by Opponent.

Accordingly, assuming a p-game $\mathcal{U}$ for the universe, a strategy $\mathrm{En}(\mathrm{Id}_{N \Rightarrow N}(\pi_1, \pi_2)) : (N \Rightarrow N) \mathbin{\&} (N \Rightarrow N) \rightarrow \mathcal{U}$ that encodes the p-game $\Pi((N \Rightarrow N) \mathbin{\&} (N \Rightarrow N), \mathrm{Id}_{N \Rightarrow N}(\pi_1, \pi_2))$, if any, only has to encode the currently possible components of the codomain $\mathrm{Id}_{N \Rightarrow N}(\pi_1, \pi_2)$ at each moment; it does not have to decide if the two input strategies on the domain are equal. 
We emphasise that this \emph{intensionality} is highly nontrivial, and it distinguishes game semantics from other semantics of MLTT such as domains and realisability \cite{palmgren1993information,streicher2012semantics,blot2018extensional}.
Moreover, this observation is the starting point of our solution to the main problem (\S\ref{GameSemanticsOfUniverses}) in achieving game semantics of the universe sketched in \S\ref{Solution}. 
\end{example}

\section{Game semantics of universes}
\label{GameSemanticsOfUniverses}
This section presents our main contribution: game semantics of the cumulative hierarchy of universes. 

To this end, we first recall the semantic type former for the cumulative hierarchy of universes:

\begin{definition}[categorical semantics of universes \cite{hofmann1997syntax}]
\label{DefSemanticTypeFormerForUniverses}
A CwF $\mathcal{C}$ \emph{supports universes} if
\begin{itemize}

\item \textsc{(U-Form)} Given an object $\Gamma \in \mathcal{C}$, there is a type $\mathcal{U}_k^{[\Gamma]} \in \mathrm{Ty}(\Gamma)$ for each natural number $k \in \mathbb{N}$, called the \emph{$(k+1)$st universe} in the context $\Gamma$, where we often omit the superscript $(\_)^{[\Gamma]}$ (when the object $\Gamma$ is obvious) and/or the subscript $(\_)_k$ (when the index $k$ is unimportant);

\item \textsc{(U-Intro)} Given a type $A \in \mathrm{Ty}(\Gamma)$, there is a term $\mathrm{En}_k(A) \in \mathrm{Tm}(\Gamma, \mathcal{U}_k)$ for some $k \in \mathbb{N}$, subsuming $\mathrm{En}_k(\mathcal{U}_k^{[\Gamma]}) \in \mathrm{Tm}(\Gamma, \mathcal{U}^{[\Gamma]}_{k+1})$ for each  $k \in \mathbb{N}$, where we often omit the subscript $(\_)_k$ on $\mathrm{En}$;

\item \textsc{(U-Elim)} Each term $\psi \in \mathrm{Tm}(\Gamma, \mathcal{U}_k)$ induces a type $\mathrm{El}_k(\psi) \in \mathrm{Ty}(\Gamma)$, where we often omit the subscript $(\_)_k$ on $\mathrm{El}$;

\item \textsc{(U-Comp)} $\mathrm{El}(\mathrm{En}(A)) = A$;

\item \textsc{(U-Cumul)} If $\psi \in \mathrm{Tm}(\Gamma, \mathcal{U}_k)$, then $\psi \in \mathrm{Tm}(\Gamma, \mathcal{U}_{k+1})$;

\item \textsc{(U-Subst)} $\mathcal{U}_k^{[\Gamma]}\{ \phi \} = \mathcal{U}_k^{[\Delta]} \in \mathrm{Ty}(\Delta)$ for each morphism $\phi : \Delta \to \Gamma$;

\item \textsc{($\mathrm{En}$-Subst)} $\mathrm{En}(A)\{ \phi \} = \mathrm{En}(A \{ \phi \}) \in \mathrm{Tm}(\Delta, \mathcal{U})$.
\end{itemize}

\if0
In addition, $\mathcal{C}$ \emph{strictly supports universes} if 
\begin{itemize}
\item \textsc{($\mathrm{En}$-Uniq)} $A = A'$ implies $\mathrm{En}(A) = \mathrm{En}(A')$ for all $A, A' \in \mathrm{Ty}(\Gamma)$.
\end{itemize}
\fi
\end{definition}

The axiom \textsc{U-Cumul} requires the hierarchy $(\mathcal{U}_k)_{k \in \mathbb{N}}$ of universes $\mathcal{U}_k$ to be \emph{cumulative}.
For achieving game semantics of the cumulative hierarchy of universes, it suffices to equip our game-semantic CwF $\mathbb{WPG}_\oc$ with this semantic type former because then the semantic type former will automatically induce game semantics of the cumulative hierarchy of universes as described in Hofmann \cite{hofmann1997syntax}.

\subsection{Universe predicate games}
\label{UniversePredicativeGames}
For convenience, we employ the following reformulation $\mathrm{Id}'(\alpha, \alpha')$ of Id-types that satisfy the axiom \textsc{Id'-Subst} corresponding to \textsc{Id-Subst}.
In fact, the type $\mathrm{Id}_A \in \mathrm{Ty}(\Gamma . A . A^+)$ is equivalent to the family $(\mathrm{Id}'_A(\alpha, \alpha'))_{\alpha, \alpha' \in \mathrm{Tm}(\Gamma, A)}$ of types $\mathrm{Id}'_A(\alpha, \alpha') \in \mathrm{Ty}(\Gamma)$: The former is recovered from the latter by
\begin{equation*}
\mathrm{Id}_A \colonequals \mathrm{Id}'_A(\mathrm{v}\{ \mathrm{p} \}, \mathrm{v}),
\end{equation*}
and the latter from the former by
\begin{equation*}
\mathrm{Id}'_A(\alpha, \alpha') \colonequals \mathrm{Id}_A\{ \langle \langle \mathrm{id}_\Gamma, \alpha \rangle, \alpha' \rangle \}.
\end{equation*}
We also note that the axiom \textsc{Id-Subst} implies the equation
\begin{align*}
\mathrm{Id}'_A(\alpha, \alpha')\{ \phi  \} &= \mathrm{Id}_A\{ \langle \langle \mathrm{id}_\Gamma, \alpha \rangle, \alpha' \rangle \}\{ \phi \} \\
&= \mathrm{Id}_A\{ \langle \langle \phi, \alpha\{ \phi \} \rangle, \alpha'\{ \phi \} \rangle \} \\
&= \mathrm{Id}_A\{ \langle \langle \phi \circ \mathrm{p}, \mathrm{v} \rangle \circ \mathrm{p}, \mathrm{v} \rangle \}\{ \langle \langle \mathrm{id}_\Gamma, \alpha\{ \phi \} \rangle, \alpha'\{ \phi \} \rangle \} \\
&= \mathrm{Id}_A\{ \phi_{A, A^+}^{++} \}\{ \langle \langle \mathrm{id}_\Gamma, \alpha\{ \phi \} \rangle, \alpha'\{ \phi \} \rangle \} \\
&= \mathrm{Id}_{A\{ \phi \}}\{ \langle \langle \mathrm{id}_\Gamma, \alpha\{ \phi \} \rangle, \alpha'\{ \phi \} \rangle \} \quad \text{(by \textsc{Id-Subst})} \\
&= \mathrm{Id}'_{A\{ \phi \}}(\alpha\{ \phi \}, \alpha'\{ \phi \})
\end{align*}
for each terms $\alpha, \alpha' \in \mathrm{Tm}(\Gamma, A)$ and morphism $\phi : \Delta \rightarrow \Gamma$, which we call the axiom \textsc{Id'-Subst}.
Conversely, the axiom \textsc{Id'-Subst} implies the axiom \textsc{Id-Subst} because
\begin{align*}
\mathrm{Id}_A\{ \phi_{A, A^+}^{++}  \} &= \mathrm{Id}'_A(\mathrm{v}\{ \mathrm{p} \}, \mathrm{v})\{  \langle \phi_A^+ \circ \mathrm{p}, \mathrm{v} \rangle \} \\
&= \mathrm{Id}'_{A\{ \phi \}}(\mathrm{v}\{ \mathrm{p} \}\{ \langle \phi_A^+ \circ \mathrm{p}, \mathrm{v} \rangle \}, \mathrm{v}\{ \langle \phi_A^+ \circ \mathrm{p}, \mathrm{v} \rangle \}) \quad \text{(by \textsc{Id'-Subst})} \\
&= \mathrm{Id}'_{A\{ \phi \}}(\mathrm{v}\{ \langle \phi \circ \mathrm{p} \circ \mathrm{p}, \mathrm{v}\{ \mathrm{p} \} \rangle \}, \mathrm{v}) \\
&= \mathrm{Id}'_{A\{ \phi \}}(\mathrm{v}\{ \mathrm{p} \}, \mathrm{v}) \\
&= \mathrm{Id}_{A\{ \phi \}}.
\end{align*}
This in particular implies the equation
\begin{equation}
\label{IdSubstBeta}
\mathrm{Id}'_A(\alpha, \alpha')(\gamma_0^\dagger) = \mathrm{Id}'_{A(\gamma_0^\dagger)}(\alpha \bullet \gamma_0, \alpha' \bullet \gamma_0)
\end{equation}
for all $\gamma_0^\dagger \in \mathbb{WPG}_\oc$.
We leave it to the reader to reformulate the other axioms on Id-types in such a way that they correspond to this reformulation.
From now on, we simply write $\mathrm{Id}(\alpha, \alpha')$ for $\mathrm{Id}'(\alpha, \alpha')$.

Then, as sketched in \S\ref{Solution}, the main idea for the construction of our game-semantic type former for the cumulative hierarchy of universes is centred around the following \emph{universe p-games}:
\begin{definition}[universe p-games]
\label{DefUniversePredicativeGames}
Let us fix an injection $\sharp_0 : \{ 1, 0, N, \Pi, \Sigma, \mathrm{Id} \} \rightarrowtail \mathbb{N}$.
For each natural number $k \in \mathbb{N}$, the \emph{$(k+1)$st universe predicate (p-) game} is the constant p-game $\mathcal{U}_k$ on the game $|\mathcal{U}_k| \colonequals \bigcup_{i \in \mathbb{N}} |\mathcal{U}_k^{(i)}|$ together with an arbitrarily fixed injection $\sharp_k : \{ \, 1, 0, N, \Pi, \Sigma, \mathrm{Id} \, \} \uplus \{ \, \mathcal{U}_j \mid j < k \, \} \rightarrowtail \mathbb{N}$ that conservatively extends $\sharp_{k-1}$, where $\mathcal{U}_k^{(i)}$ is a p-game inductively defined as follows:
\begin{enumerate}

\item \textsc{(Base case)} We define the p-game 
\begin{equation*}
\mathcal{U}_k^{(0)} \colonequals \mathscr{P}(\mathrm{Pref}(\{ \, q^{\mathrm{OQ}} . \sharp_k(X)^{\mathrm{PA}} \mid X \in \{ 1, 0, N, \mathcal{U}_j \}, j < k \, \})),
\end{equation*}
where $q^{\mathrm{OQ}}$ justifies $\sharp_k(X)^{\mathrm{PA}}$, together with a function
\begin{align*}
\mathrm{El}^{(0)}_k : \mathbb{WPG}_\oc(\oc \mathcal{U}_k^{(0)}) &\rightarrow \mathrm{ob}(\mathbb{WPG}_\oc) \\
\underline{\sharp_k(X)}^\dagger &\mapsto X.
\end{align*}
Abusing notation, we lift this function to a dependent p-game $\mathrm{El}^{(0)}_k \in \mathscr{D}(\mathcal{U}_k^{(0)})$ by
\begin{mathpar}
|\mathrm{El}^{(0)}_k| \colonequals \bigcup_{\underline{\sharp_k(X)}^\dagger \in \mathbb{WPG}_\oc(\oc \mathcal{U}_k^{(0)})} \mathrm{El}^{(0)}_k(\underline{\sharp_k(X)}^\dagger)
\and
\|\mathrm{El}^{(0)}_k\| : \underline{\sharp_k(X)}^\dagger \mapsto \mathrm{El}^{(0)}_k(\underline{\sharp_k(X)}^\dagger),
\end{mathpar}
where recall Example~\ref{ExamplesOfStrategies} for the notation $\underline{\sharp_k(X)}$.
We also write $\mathcal{U}_k^{(0)}$ for the constant p-game $\{ \mathcal{U}_k^{(0)} \}$.

Moreover, for each object $\Gamma \in \mathbb{WPG}_\oc$, we further lift this dependent p-game to a function
\begin{align*}
\mathrm{El}^{(0)}_{k, \Gamma} : \mathbb{WPG}_\oc(\Gamma, \mathcal{U}_k^{(0)}) &\rightarrow \mathscr{D}(\Gamma) \\
\psi &\mapsto \mathrm{El}^{(0)}_{k, \Gamma}(\psi),
\end{align*}
where the dependent p-game $\mathrm{El}^{(0)}_{k, \Gamma}(\psi) \in \mathscr{D}(\Gamma)$ is given by
\begin{mathpar}
|\mathrm{El}^{(0)}_{k, \Gamma}(\psi)| \colonequals \bigcup_{\gamma_0^\dagger \in \mathbb{WPG}_\oc(\oc \Gamma)} \mathrm{El}^{(0)}_k(\psi \bullet \gamma_0)
\and
\|\mathrm{El}^{(0)}_{k, \Gamma}(\psi)\| : \gamma_0^\dagger \mapsto \mathrm{El}^{(0)}_k(\psi \bullet \gamma_0).
\end{mathpar}
This function $\mathrm{El}^{(0)}_{k, \Gamma}$ generalises the dependent p-game $\mathrm{El}^{(0)}_k$ due to the evident isomorphism $\mathrm{El}^{(0)}_{k, T} \cong \mathrm{El}^{(0)}_k$.
We usually omit the subscript $(\_)_\Gamma$ on the function $\mathrm{El}^{(0)}_{k, \Gamma}$ when it does not bring confusion. 

\item \textsc{(Inductive step)} We define the p-game 
\begin{align*}
\mathcal{U}_k^{(i+1)} \colonequals \mathscr{P}(\mathcal{U}_k^{(i)} &\cup \mathrm{Pref}(\{ \, q^{\mathrm{OQ}} . \sharp_k(Y)^{\mathrm{PA}} . \boldsymbol{s} \mid Y \in \{ \Pi, \Sigma \}, \boldsymbol{s} \in \Sigma(\mathcal{U}_k^{(i)}, \mathrm{El}^{(i)}_k \Rightarrow \mathcal{U}_k^{(i)}) \, \}) \\
&\cup  \mathrm{Pref}(\{ \, q^{\mathrm{OQ}} . \sharp_k(\mathrm{Id})^{\mathrm{PA}} . \boldsymbol{t} \mid \boldsymbol{t} \in \Sigma(\mathcal{U}_k^{(i)}, \mathrm{El}^{(i)}_k \mathbin{\&} \mathrm{El}^{(i)}_k) \, \})),
\end{align*}
where $q^{\mathrm{OQ}}$ justifies both $\sharp(Y)^{\mathrm{PA}}$ and $\sharp(\mathrm{Id})^{\mathrm{PA}}$, and in turn the latter two moves justify the initial moves in $\boldsymbol{s}$ and $\boldsymbol{t}$, respectively, together with a function
\begin{align*}
\mathrm{El}^{(i+1)}_k : \mathbb{WPG}_\oc(\oc \mathcal{U}_k^{(i+1)}) &\rightarrow \mathrm{ob}(\mathbb{WPG}_\oc) \\
\underline{\sharp_k(X)}^\dagger &\mapsto X \\
q . \sharp_k(Y) . \langle \mu, \psi \rangle^\dagger &\mapsto Y(\mathrm{El}^{(i)}_k(\mu), \mathrm{El}^{(i)}_k(\psi)) \\
q . \sharp_k(\mathrm{Id}) . \langle \mu, \langle \alpha, \alpha' \rangle \rangle^\dagger &\mapsto \mathrm{Id}_{\mathrm{El}^{(i)}_k(\mu)}(\mathrm{El}^{(i)}_k(\alpha), \mathrm{El}^{(i)}_k(\alpha')),
\end{align*}
where $q . a . \sigma \colonequals \mathrm{Pref}(\{ \, q^{\mathrm{OQ}}a^{\mathrm{PA}} \boldsymbol{v} \mid \boldsymbol{v} \in \sigma \, \})^{\mathrm{Even}}$ for each question $q^{\mathrm{OQ}}$, answer $a^{\mathrm{PA}}$ and strategy $\sigma$, and $q^{\mathrm{OQ}}$ justifies $a^{\mathrm{PA}}$, and $a^{\mathrm{PA}}$ justifies initial moves occurring in $\boldsymbol{v}$.

Again, we lift this function $\mathrm{El}^{(i+1)}_k$ to a dependent p-game $\mathrm{El}^{(i+1)}_k \in \mathscr{D}(\mathcal{U}_k^{(i+1)})$ and further to a function $\mathbb{WPG}_\oc(\Gamma, \mathcal{U}_k^{(i+1)}) \rightarrow \mathscr{D}(\Gamma)$ for each $\Gamma \in \mathbb{WPG}_\oc$ in the same way as the case of $\mathrm{El}^{(0)}_k$.
We also write $\mathcal{U}_k^{(i+1)}$ for the constant p-game $\{ \mathcal{U}_k^{(i+1)} \}$ and apply the notations for $\mathrm{El}^{(0)}_k$ to $\mathrm{El}^{(i+1)}_k$.

\end{enumerate}
Given an object $\Gamma \in \mathbb{WPG}_\oc$, we write $\mathcal{U}_k^{[\Gamma]} \in \mathscr{D}(\Gamma)$ for the constant dependent p-game at $\mathcal{U}_k$ and we often omit the superscript $(\_)^{[\Gamma]}$ on $\mathcal{U}_k^{[\Gamma]}$ when it does not bring confusion.

We finally define the injection
\begin{equation*}
\sharp \colonequals \bigcup_{k \in \mathbb{N}}\sharp_k : \{ \, 1, 0, N, \Pi, \Sigma, \mathrm{Id} \, \} \uplus \{ \, \mathcal{U}_j \mid j \in \mathbb{N} \, \} \rightarrowtail \mathbb{N}.
\end{equation*}
\end{definition}

Let us emphasise that the inductive step in Definition~\ref{DefUniversePredicativeGames} properly implements our idea on how to encode game semantics of Pi-, Sigma- and Id-types by strategies on games (\S\ref{Solution}) by nontrivial recursion. 
Specifically, our key technique is to define each universe p-game $\mathcal{U}_k$ inductively in terms of the games $\mathcal{U}_k^{(i)}$ ($i \in \mathbb{N}$) along with the construction of the function $\mathrm{El}_k^{(i)}$.
This is the highlight of the present work.

\subsection{Computational game semantics of the cumulative hierarchy of universes}
\label{ComputationalGameSemanticsOfCumulativeHierarchyOfUniverses}
We need one more preparation for our game semantics of universes as follows.
The axiom \textsc{U-Intro} (Definition~\ref{DefSemanticTypeFormerForUniverses}) requires that every type $A$ has its encoding $\mathrm{El}(A)$.
As already indicated in \S\ref{Solution}, however, we define the encoding function $\mathrm{En}$ inductively along the construction of types.
Accordingly, we have to restrict types in the CwF $\mathbb{WPG}_\oc$ to those freely generated by the type constructions, leading to:
\begin{definition}[a subCwF $\mathbb{UPG}_\oc$]
Let $\mathbb{UPG}_\oc \hookrightarrow \mathbb{WPG}_\oc$ be the substructural CwF of $\mathbb{WPG}_\oc$ such that
\begin{itemize}


\item The underlying category $\mathbb{UPG}_\oc$ is the category $\mathbb{WPG}_\oc$;

\item The types of $\mathbb{UPG}_\oc$ are inductively constructed from the atomic dependent p-games $1$, $0$, $N$ and $\mathcal{U}_k$ for all $k \in \mathbb{N}$ by the constructions $\Pi$, $\Sigma$ and $\mathrm{Id}$;

\item The terms of $\mathbb{UPG}_\oc$ are given by $\mathrm{Tm}_{\mathbb{UPG}_\oc}(\Gamma, A) \colonequals \mathrm{Tm}_{\mathbb{WPG}_\oc}(\Gamma, A)$ for all $\Gamma \in \mathbb{UPG}_\oc$ and $A \in \mathrm{Ty}_{\mathbb{UPG}_\oc}(\Gamma)$.

\end{itemize}
\end{definition}

\begin{corollary}[well-defined $\mathbb{UPG}_\oc$]
The structure $\mathbb{UPG}_\oc$ forms a well-defined CwF that supports One-, Zero-, N-, Pi-, Sigma- and Id-types in the same way as the CwF $\mathbb{WPG}_\oc$.
\end{corollary}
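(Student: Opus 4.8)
The plan is to observe that $\mathbb{UPG}_\oc$ differs from $\mathbb{WPG}_\oc$ only in that its types form the inductively generated subclass $\mathrm{Ty}_{\mathbb{UPG}_\oc}(\Gamma) \subseteq \mathrm{Ty}_{\mathbb{WPG}_\oc}(\Gamma) = \mathscr{D}^{\mathrm{w}}(\Gamma)$, while its underlying category, terms, comprehensions, projections, extensions and every type- and term-former are literally those of $\mathbb{WPG}_\oc$ restricted to these types. Consequently all eight CwF equations of Definition~\ref{DefCwFs} and all the rules for One-, Zero-, N-, Pi-, Sigma- and Id-types (Theorems~\ref{ThmGameSemanticsOfPiType}--\ref{LemGameSemanticIdTypes}) hold in $\mathbb{UPG}_\oc$ simply because they hold in $\mathbb{WPG}_\oc$ and all the operations involved agree. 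The whole content of the corollary therefore reduces to checking that the restricted type structure is \emph{internally closed}: that the atomic generators are legitimate objects of $\mathscr{D}^{\mathrm{w}}$, that substitution $\_\{\phi\}$ maps $\mathrm{Ty}_{\mathbb{UPG}_\oc}(\Gamma)$ into $\mathrm{Ty}_{\mathbb{UPG}_\oc}(\Delta)$, and that each type-former and the result type of each term-former land back in the restricted class.

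First I would record that the atomic generators belong to $\mathscr{D}^{\mathrm{w}}$. For $1$, $0$ and $N$ this is immediate (Theorem~\ref{ThmGameSemanticsOfSigmaType}). For each $\mathcal{U}_k$ one checks well-openedness --- every position carries the unique initial occurrence $q$ --- and well-foundedness, which follows from the stratification in Definition~\ref{DefUniversePredicativeGames}: each move of $|\mathcal{U}_k| = \bigcup_{i} |\mathcal{U}_k^{(i)}|$ originates at some finite level $i$, and the enabling relation strictly decreases this level, since the components appearing after $\sharp_k(\Pi)$, $\sharp_k(\Sigma)$ or $\sharp_k(\mathrm{Id})$ in $\mathcal{U}_k^{(i+1)}$ are built from $\mathcal{U}_k^{(i)}$ and $\mathrm{El}_k^{(i)}$, so there is no infinite enabling chain.

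The key step is substitution closure, proved by induction on the generation of $A \in \mathrm{Ty}_{\mathbb{UPG}_\oc}(\Gamma)$. For the base cases $A \in \{1, 0, N, \mathcal{U}_k\}$ the type is a \emph{constant} dependent p-game, and substitution on a constant family returns the same constant family, since $A\{\phi\}(\delta_0^\dagger) = A(\phi^\dagger \bullet \delta_0)$ equals the constant value of $A$; this is exactly \textsc{U-Subst} for $\mathcal{U}_k$ (so $\mathcal{U}_k^{[\Gamma]}\{\phi\} = \mathcal{U}_k^{[\Delta]}$) together with the trivial identities $1\{\phi\} = 1$, $0\{\phi\} = 0$, $N\{\phi\} = N$, so the substituted type is again atomic. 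For the inductive cases one invokes the substitution axioms already established: $\Pi(A,B)\{\phi\} = \Pi(A\{\phi\}, B\{\phi_A^+\})$ by \textsc{$\Pi$-Subst}, the analogous equation for $\Sigma$, and $\mathrm{Id}_A(\alpha,\alpha')\{\phi\} = \mathrm{Id}_{A\{\phi\}}(\alpha\{\phi\},\alpha'\{\phi\})$ by \textsc{Id'-Subst}; in each case the inductive hypothesis places the arguments in the restricted class, so the result is again generated by $\Pi$, $\Sigma$ or $\mathrm{Id}$ and lies in $\mathrm{Ty}_{\mathbb{UPG}_\oc}(\Delta)$.

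Finally I would verify closure under the formers. The type-formers $\Pi$, $\Sigma$ and $\mathrm{Id}$ are precisely the generating constructions, so the restricted class is closed under them by definition, and the comprehension $\Gamma.A = \Sigma(\Gamma, A)$ is an \emph{object} of the unchanged category and hence always available. For the term-formers, introduction and elimination produce $\mathbb{WPG}_\oc$-terms, which are $\mathbb{UPG}_\oc$-terms by definition; one only needs their result types to lie in the restricted class, e.g. $B\{\overline{\alpha}\}$ for $\Pi$-Elim and the motive $P$ together with its substituted instances for $\Sigma$-Elim, and likewise for the atomic and Id eliminators, all of which are substitution instances of $\mathrm{Ty}_{\mathbb{UPG}_\oc}$-types and hence covered by the closure just proved. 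The main --- and only genuinely non-routine --- obstacle is this substitution-closure induction, and within it the stability of the atomic universe types under substitution, since everything else is inherited verbatim from $\mathbb{WPG}_\oc$.
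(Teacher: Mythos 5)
Your proposal is correct and follows essentially the same route as the paper: the paper's own proof simply notes that the corollary follows immediately from Theorem~\ref{DefCwFWPG} since everything is inherited from $\mathbb{WPG}_\oc$, with the single nontrivial point being closure of the restricted type class under substitution, proved by induction on types --- exactly the step you isolate and carry out (via the constancy of the atomic types and the axioms \textsc{$\Pi$-Subst}, \textsc{$\Sigma$-Subst} and \textsc{Id'-Subst}). Your write-up is merely more explicit than the paper's one-line argument, additionally spelling out well-foundedness and well-openedness of the universe p-games and the closure of result types of the eliminators, but these are elaborations of the same idea rather than a different approach.
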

\begin{proof}
This corollary immediately follows from Theorem~\ref{DefCwFWPG} (where the only nontrivial point is the closure of types $\mathbb{UPG}_\oc$ under substitution, but it is easily shown by induction on the types).
\end{proof}

In addition, this CwF $\mathbb{UPG}_\oc$ also supports the cumulative hierarchy of universes: 
\begin{theorem}[game semantics of universes]
\label{ThmGameSemanticsOfUniverses}
The CwF $\mathbb{UPG}_\oc$ supports universes. 
\end{theorem}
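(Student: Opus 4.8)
The plan is to exhibit the semantic type former of Definition~\ref{DefSemanticTypeFormerForUniverses} on the CwF $\mathbb{UPG}_\oc$, since the underlying CwF structure is already in place. The universe types themselves are handed to us, so \textsc{(U-Form)} is immediate: by Definition~\ref{DefUniversePredicativeGames} the type $\mathcal{U}_k^{[\Gamma]} \in \mathscr{D}^{\mathrm{w}}(\Gamma) = \mathrm{Ty}(\Gamma)$ is the constant dependent p-game at $\mathcal{U}_k$, which is well-founded and well-opened. What remains is to define the encoding $\mathrm{En}$ and to globalise the decoding $\mathrm{El}$. I would define $\mathrm{En}$ by induction on the inductive generation of types in $\mathbb{UPG}_\oc$, following \S\ref{Solution}: the base cases $X \in \{1,0,N,\mathcal{U}_j\}$ are encoded by the strategy $q \mapsto \sharp(X)$ (this simultaneously realises the clause $\mathrm{En}(\mathcal{U}_k) \in \mathrm{Tm}(\Gamma,\mathcal{U}_{k+1})$ of \textsc{U-Intro}, as $\sharp_{k+1}(\mathcal{U}_k)$ is a legal answer in $\mathcal{U}_{k+1}^{(0)}$), and $\mathrm{En}(\Pi(A,B))$, $\mathrm{En}(\Sigma(A,B))$, $\mathrm{En}(\mathrm{Id}_A(\alpha,\alpha'))$ are the corresponding pairings $\langle \mathrm{En}(A), \mathrm{En}(B)\rangle$ and $\langle \mathrm{En}(A), \langle\alpha,\alpha'\rangle\rangle$ prefixed by $q.\sharp(Y)$. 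For \textsc{(U-Elim)} I would set $\mathrm{El}_k \colonequals \bigcup_{i\in\mathbb{N}} \mathrm{El}_k^{(i)}$, well-defined because the approximations $\mathcal{U}_k^{(i)} \subseteq \mathcal{U}_k^{(i+1)}$ form a chain along which the $\mathrm{El}_k^{(i)}$ are compatible, and then lift it to $\mathrm{El}_{k,\Gamma} \colon \mathrm{Tm}(\Gamma,\mathcal{U}_k) \to \mathscr{D}(\Gamma)$ exactly as in Definition~\ref{DefUniversePredicativeGames}.

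The remaining axioms split into routine ones and the crux. \textsc{(U-Subst)} is immediate: $\mathcal{U}_k$ is a constant dependent p-game, so $\mathcal{U}_k^{[\Gamma]}\{\phi\}(\delta_0^\dagger) = \mathcal{U}_k(\phi^\dagger \bullet \delta_0) = \mathcal{U}_k = \mathcal{U}_k^{[\Delta]}(\delta_0^\dagger)$ by the substitution clause of Theorem~\ref{DefCwFWPG}. For \textsc{(U-Cumul)} I would observe that $\sharp_{k+1}$ conservatively extends $\sharp_k$ and the recursion of Definition~\ref{DefUniversePredicativeGames} is monotone, so $\mathcal{U}_k \subseteq \mathcal{U}_{k+1}$ as games; I then check that this inclusion preserves the liveness relation and the winning, well-bracketed conditions, whence any term of $\mathcal{U}_k$ is a term of $\mathcal{U}_{k+1}$. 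For \textsc{(En-Subst)} I would induct on $A$, using $\alpha\{\phi\} = \alpha \bullet \phi$ (Theorem~\ref{DefCwFWPG}) together with the substitution laws for $\Pi$ and $\Sigma$ (\textsc{$\Pi$-Subst} and its analogue) and the reformulated \textsc{Id'-Subst}, equation~\eqref{IdSubstBeta}; the base cases reduce to the fact that $\mathrm{En}(X)$ ignores its domain, so precomposing with $\phi$ leaves it unchanged, matching $\mathrm{En}(X\{\phi\}) = \mathrm{En}(X)$.

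The heart of the proof is \textsc{(U-Comp)}, $\mathrm{El}(\mathrm{En}(A)) = A$, which I would prove by induction on the generation of $A$. The base cases hold by the definition of $\mathrm{El}_k^{(0)}$, namely $\underline{\sharp_k(X)}^\dagger \mapsto X$. For $\Pi$ and $\Sigma$ the recursive clause $\mathrm{El}_k^{(i+1)}(q.\sharp_k(Y).\langle\mu,\psi\rangle^\dagger) = Y(\mathrm{El}_k^{(i)}(\mu), \mathrm{El}_k^{(i)}(\psi))$, applied at $\mu = \mathrm{En}(A)$ and $\psi = \mathrm{En}(B)$, reduces the goal to the inductive hypotheses $\mathrm{El}(\mathrm{En}(A)) = A$ and $\mathrm{El}(\mathrm{En}(B)) = B$. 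The decisive case is $\mathrm{Id}$: here the clause gives $\mathrm{El}(\mathrm{En}(\mathrm{Id}_A(\alpha,\alpha'))) = \mathrm{Id}_{\mathrm{El}_k^{(i)}(\mu)}(\mathrm{El}_k^{(i)}(\alpha),\mathrm{El}_k^{(i)}(\alpha'))$ with $\mu = \mathrm{En}(A)$ and second component the copy-cat of $\langle\alpha,\alpha'\rangle$, and I must verify that decoding this copy-cat component recovers the original strategies $\alpha,\alpha'$ \emph{themselves}, so that with $\mathrm{El}(\mathrm{En}(A)) = A$ by induction and equation~\eqref{IdSubstBeta} the right-hand side is exactly $\mathrm{Id}_A(\alpha,\alpha')$. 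This is precisely the ``encoding without deciding'' of \S\ref{Solution} and Example~\ref{ExCrucialExample}: the potentially infinite copy-cat plays collectively carry the full graphs of $\alpha$ and $\alpha'$, so that $\mathrm{El}$ (which we allow to be uncomputable) reflects whether $\alpha = \alpha'$ even though no strategy decides it.

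The main obstacle I anticipate is the simultaneous \textbf{well-definedness and legitimacy of the global $\mathrm{El}_k$}. Two points need care. First, for $\mathrm{El}_k = \bigcup_i \mathrm{El}_k^{(i)}$ to be defined on \emph{every} term $\psi \in \mathrm{Tm}(\Gamma,\mathcal{U}_k)$, I must show that each such winning strategy factors through a finite approximation $\mathcal{U}_k^{(i)}$; the argument is that the noetherian condition bounds the \emph{nesting depth of the type constructors} (the $\sharp$-moves) appearing in P-views, while still permitting the \emph{unbounded copy-cat payload} that lives inside a single level $\mathcal{U}_k^{(i)}$. Second, I must check that each decoded $\mathrm{El}_k(\psi)$ is a well-founded, well-opened dependent p-game, i.e.\ lands in $\mathscr{D}^{\mathrm{w}}(\Gamma)$, which requires unwinding the recursion so that the decomposition $\langle\mu,\psi\rangle$ of the payload is always available from the game structure. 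Closely tied to this is verifying that $\mathrm{En}(\mathrm{Id}_A(\alpha,\alpha'))$ is genuinely \emph{winning and well-bracketed} despite its infinite plays; establishing totality, innocence, noetherian-ness and well-bracketing of a copy-cat strategy threaded through the universe game is the most delicate bookkeeping in the argument, and it is exactly where the conflict flagged in \S\ref{HowToEncodeGamesByStrategies} is resolved.
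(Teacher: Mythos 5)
Your proposal follows essentially the same route as the paper's proof: \textsc{(U-Form)} read off from Definition~\ref{DefUniversePredicativeGames}, $\mathrm{En}$ defined by induction on the inductive generation of types in $\mathbb{UPG}_\oc$ (with base cases $q \mapsto \sharp(X)$ and the $q.\sharp(Y)$-prefixed pairings for $\Pi$, $\Sigma$, $\mathrm{Id}$, the paper additionally currying the second component as $\lambda \circ \mathrm{En}(C)$), $\mathrm{El}_k \colonequals \bigcup_{i}\mathrm{El}_k^{(i)}$ for \textsc{(U-Elim)}, \textsc{(U-Cumul)} and \textsc{(U-Subst)} by construction, and \textsc{(U-Comp)} and \textsc{(En-Subst)} by induction on $A$ with the $\Pi$ and $\mathrm{Id}$ cases as the crux, using \textsc{$\Pi$-Subst}, \textsc{Id'-Subst} and equation~\eqref{IdSubstBeta} exactly as the paper does. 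The well-definedness concerns you flag at the end (that every winning strategy on $\mathcal{U}_k$ factors through some $\mathcal{U}_k^{(i)}$, and that the encodings are winning and well-bracketed) are legitimate points the paper in fact leaves implicit, so raising them is a refinement of, not a departure from, its argument.
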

\begin{proof}
Let $\Delta, \Gamma \in \mathbb{UPG}_\oc$, $A \in \mathrm{Ty}_{\mathbb{UPG}_\oc}(\Gamma)$ and $\phi \in \mathbb{UPG}_\oc(\Delta, \Gamma)$.

\begin{itemize}

\item \textsc{(U-Form)} We have $\mathcal{U}_k^{[\Gamma]} \in \mathscr{D}(\Gamma)$ for each natural number $k \in \mathbb{N}$ (Definition~\ref{DefUniversePredicativeGames}).

\item \textsc{(U-Intro)} Because $A$ is constructed inductively, we can define a term $\mathrm{En}(A) \in \mathrm{Tm}_{\mathbb{UPG}_\oc}(\Gamma, \mathcal{U}_{k(A)})$ for some natural number $k(A) \in \mathbb{N}$ inductively along the construction of $A$ as follows:
\begin{enumerate}

\item If $A$ is $1$, $0$ or $N$, then
\begin{equation*}
\mathrm{En}(A) \colonequals \underline{A} \in \mathrm{Tm}_{\mathbb{UPG}_\oc}(\Gamma, \mathcal{U}_0);
\end{equation*}

\item If $A$ is $\mathcal{U}_i$ for some natural number $i \in \mathbb{N}$, then
\begin{equation*}
\mathrm{En}(\mathcal{U}_i) \colonequals \underline{\mathcal{U}_i} \in \mathrm{Tm}_{\mathbb{UPG}_\oc}(\Gamma, \mathcal{U}_{i+1});
\end{equation*}

\item If $A$ is $Y(B, C)$, where $Y$ is $\Pi$ or $\Sigma$, then
\begin{equation*}
\mathrm{En}(Y(B, C)) \colonequals q^{\mathrm{OQ}} . \sharp (Y)^{\mathrm{PA}} . \langle \mathrm{En}(B), \lambda \circ \mathrm{En}(C) \rangle \in \mathrm{Tm}_{\mathbb{UPG}_\oc}(\Gamma, \mathcal{U}_{\max (k(B), k(C))});
\end{equation*}

\item If $A$ is $\mathrm{Id}_D(\delta, \delta')$, then
\begin{equation*}
\mathrm{En}(\mathrm{Id}_D(\delta, \delta')) \colonequals q^{\mathrm{OQ}} . \sharp (\mathrm{Id})^{\mathrm{PA}} . \langle \mathrm{En}(D), \langle \delta, \delta' \rangle \rangle \in \mathrm{Tm}_{\mathbb{UPG}_\oc}(\Gamma, \mathcal{U}_{k(D)}).
\end{equation*}

\end{enumerate}

\item \textsc{(U-Elim)} We define the function $\mathrm{El}_k : \mathrm{Tm}_{\mathbb{UPG}_\oc}(\Gamma, \mathcal{U}_k^{[\Gamma]}) \cong \mathbb{WPG}_\oc(\Gamma, \mathcal{U}_k) \rightarrow \mathscr{D}(\Gamma)$ to be the union
\begin{equation*}
\mathrm{El}_k \colonequals \big(\bigcup_{i \in \mathbb{N}} \mathrm{El}_k^{(i)}\big) : \mathbb{WPG}_\oc(\Gamma, \mathcal{U}_k) \rightarrow \mathscr{D}(\Gamma)
\end{equation*}
up to the isomorphism $\mathrm{Tm}_{\mathbb{UPG}_\oc}(\Gamma, \mathcal{U}_k^{[\Gamma]}) \cong \mathbb{WPG}_\oc(\Gamma, \mathcal{U}_k)$, where the function $\mathrm{El}_k^{(i)} : \mathbb{WPG}_\oc(\Gamma, \mathcal{U}_k^{(i)}) \rightarrow \mathscr{D}(\Gamma)$ is given in Definition~\ref{DefUniversePredicativeGames}.
Note that $\mathrm{El}^{(0)}_{k, \Gamma}(\psi) \in \mathscr{D}(\Gamma)$ for each $\psi \in \mathrm{Tm}_{\mathbb{UPG}_\oc}(\Gamma, \mathcal{U}_k^{[\Gamma]})$ is given by
\begin{mathpar}
|\mathrm{El}_{k}(\psi)| \colonequals \bigcup_{\gamma_0^\dagger \in \mathbb{WPG}_\oc(\oc \Gamma)} \mathrm{El}_k(\psi \bullet \gamma_0)
\and
\|\mathrm{El}_{k}(\psi)\| : \gamma_0^\dagger \mapsto \mathrm{El}_k(\psi \bullet \gamma_0).
\end{mathpar}

\item \textsc{(U-Comp)} We see that the equation $\mathrm{El}(\mathrm{En}(A)) = A$ holds by induction on $A$, where we focus on the cases of $A = \Pi(B, C)$ and $A = \mathrm{Id}_D(\delta, \delta')$ since the other cases are similar or trivial. 
\begin{enumerate}

\item Assume $A = \Pi(B, C)$. 
The dependent p-game
\begin{align*}
\mathrm{El} \circ \mathrm{En}(\Pi(B, C)) &= \mathrm{El}(q . \sharp(\Pi) . \langle \mathrm{En}(B), \lambda \circ \mathrm{En}(C) \rangle)
\end{align*}
consists of the underlying p-game
\begin{align*}
|\mathrm{El} \circ \mathrm{En}(\Pi(B, C))| &= |\mathrm{El}(q . \sharp(\Pi) . \langle \mathrm{En}(B), \lambda \circ \mathrm{En}(C) \rangle)| \\
&=  \bigcup_{\gamma_0^\dagger \in \mathbb{UPG}_\oc(\oc \Gamma)} |\Pi(\mathrm{El}(\mathrm{En}(B) \bullet \gamma_0), \mathrm{El}(\lambda \circ \mathrm{En}(C) \bullet \gamma_0))| \\
&=  \bigcup_{\gamma_0^\dagger \in \mathbb{UPG}_\oc(\oc \Gamma)} (|\mathrm{El}(\mathrm{En}(B) \bullet \gamma_0)| \Rightarrow | \mathrm{El}(\lambda \circ \mathrm{En}(C) \bullet \gamma_0)|) \\
&=  \bigcup_{\gamma_0^\dagger \in \mathbb{UPG}_\oc(\oc \Gamma)} |\mathrm{El}(\mathrm{En}(B) \bullet \gamma_0)| \Rightarrow \bigcup_{\gamma_0^\dagger \in \mathbb{UPG}_\oc(\oc \Gamma)}| \mathrm{El}(\lambda \circ \mathrm{En}(C) \bullet \gamma_0)| \\
&= |\mathrm{El} \circ \mathrm{En}(B)| \Rightarrow |\mathrm{El} \circ \mathrm{En}(C)| \\
&= |B| \Rightarrow |C| \quad \text{(by the induction hypothesis)} \\
&= |\Pi(B, C)|
\end{align*}
and the function
\begin{align*}
\| \mathrm{El} \circ \mathrm{En}(\Pi(B, C)) \| : \gamma_0^\dagger \in \mathbb{UPG}_\oc(\oc \Gamma) &\mapsto \mathrm{El}(q . \sharp(\Pi) . \langle \mathrm{En}(B) \bullet \gamma_0, \lambda \circ \mathrm{En}(C) \bullet \gamma_0 \rangle) \\
&= \Pi (\mathrm{El}(\mathrm{En}(B) \bullet \gamma_0), \mathrm{El}(\lambda \circ \mathrm{En}(C) \bullet \gamma_0)) \\
&= \Pi ( \mathrm{El} \circ \mathrm{En}(B)(\gamma_0^\dagger),  \mathrm{El} \circ \mathrm{En}(C)_{\gamma_0^\dagger}) \\
&= \Pi (B(\gamma_0^\dagger), C_{\gamma_0^\dagger}) \quad \text{(by the induction hypothesis)} \\
&= \Pi(B, C)(\gamma_0^\dagger).
\end{align*}
Hence, we have shown the equation
\begin{equation*}
\mathrm{El} \circ \mathrm{En}(\Pi(B, C)) = \Pi(B, C).
\end{equation*}

\item Assume $A = \mathrm{Id}_D(\delta, \delta')$. 
The dependent p-game
\begin{align*}
\mathrm{El} \circ \mathrm{En}(\mathrm{Id}_D(\delta, \delta')) &= \mathrm{El}(q . \sharp(\mathrm{Id}) . \langle \mathrm{En}(D), \langle \delta, \delta' \rangle \rangle)
\end{align*}
consists of the underlying p-game
\begin{align*}
|\mathrm{El} \circ \mathrm{En}(\mathrm{Id}_D(\delta, \delta'))| &= |\mathrm{El}(q . \sharp(\mathrm{Id}) . \langle \mathrm{En}(D), \langle \delta, \delta' \rangle \rangle)| \\
&= \bigcup_{\gamma_0^\dagger \in \mathbb{UPG}_\oc(\oc \Gamma)} |\mathrm{Id}_{\mathrm{El}(\mathrm{En}(D) \bullet \gamma_0)}(\delta \bullet \gamma_0, \delta' \bullet \gamma_0)| \\
&= T' \\
&= |\mathrm{Id}_D(\delta, \delta')|
\end{align*}
and the function
\begin{align*}
\| \mathrm{Id}_D(\delta, \delta') \| : \gamma_0^\dagger \in \mathbb{UPG}_\oc(\oc \Gamma) &\mapsto \mathrm{El}(q . \sharp(\mathrm{Id}) . \langle \mathrm{En}(D) \bullet \gamma_0, \langle \delta \bullet \gamma_0, \delta' \bullet \gamma_0 \rangle \rangle) \\
&= \mathrm{Id}_{\mathrm{El}(\mathrm{En}(D) \bullet \gamma_0)}( \delta \bullet \gamma_0, \delta' \bullet \gamma_0) \\
&= \mathrm{Id}_{\mathrm{El} \circ \mathrm{En}(D)(\gamma_0^\dagger)} (\delta \bullet \gamma_0, \delta' \bullet \gamma_0) \\
&= \mathrm{Id}_{D(\gamma_0^\dagger)} (\delta \bullet \gamma_0, \delta' \bullet \gamma_0) \quad \text{(by the induction hypothesis)} \\
&= \mathrm{Id}_D(\delta, \delta') (\gamma_0^\dagger) \quad \text{(by the equation \ref{IdSubstBeta})}.
\end{align*}
Hence, we have shown the equation
\begin{equation*}
\mathrm{El} \circ \mathrm{En}(\mathrm{Id}_D(\delta, \delta')) = \mathrm{Id}_D(\delta, \delta').
\end{equation*}

\end{enumerate}

\item \textsc{(U-Cumul)} By construction, $\psi \in \mathrm{Tm}_{\mathbb{UPG}_\oc}(\Gamma, \mathcal{U}_k)$ implies $\psi \in \mathrm{Tm}_{\mathbb{UPG}_\oc}(\Gamma, \mathcal{U}_{k+1})$.

\item \textsc{(U-Subst)} By construction, the equation $\mathcal{U}_k^{[\Gamma]}\{ \phi \} = \mathcal{U}_k^{[\Delta]} \in \mathrm{Ty}(\Delta)$ holds.

\item \textsc{($\mathrm{En}$-Subst)} We see that the equation $\mathrm{En}(A)\{ \phi \} = \mathrm{En}(A \{ \phi \}) \in \mathrm{Tm}(\Delta, \mathcal{U})$ holds by induction on $A$, where again we focus on the cases of $A = \Pi(B, C)$ and $A = \mathrm{Id}_D(\delta, \delta')$.
\begin{enumerate}

\item Assume $A = \Pi(B, C)$.
We have the equation
\begin{align*}
\mathrm{En}(\Pi(B, C))\{ \phi \} &= q . \sharp(\Pi) . \langle \mathrm{En}(B) \bullet \phi, \lambda \circ \mathrm{En}(C) \bullet \phi \rangle \\
&= q . \sharp(\Pi) . \langle \mathrm{En}(B\{ \phi \}), \lambda \circ \mathrm{En}(C\{ \phi_B^+ \}) \rangle \quad \text{(by the induction hypothesis)} \\
&= \mathrm{En}(\Pi(B\{ \phi \}, C\{ \phi_B^+ \})) \\
&= \mathrm{En}(\Pi(B, C)\{ \phi \}) \quad \text{(by the equation \ref{PiSubst})}.
\end{align*}

\item Assume $A = \mathrm{Id}_D(\delta, \delta')$.
We have the equation
\begin{align*}
\mathrm{En}(\mathrm{Id}_D(\delta, \delta'))\{ \phi \} &= q . \sharp(\mathrm{Id}) . \langle \mathrm{En}(D) \bullet \phi, \langle \delta \bullet \phi, \delta' \bullet \phi \rangle \rangle \\
&= q . \sharp(\mathrm{Id}) . \langle \mathrm{En}(D\{ \phi \}), \langle \delta\{\phi\}, \delta'\{\phi\} \rangle \rangle \quad \text{(by the induction hypothesis)} \\
&= \mathrm{En}(\mathrm{Id}_{D\{ \phi \}}(\delta\{ \phi \}, \delta'\{ \phi \})) \\
&= \mathrm{En}(\mathrm{Id}_D(\delta, \delta')\{ \phi \}) \quad \text{(by \textsc{Id'-Subst})}.
\end{align*}
\end{enumerate}


\end{itemize}
We have verified all the required axioms, completing the proof.
\end{proof}

\begin{example}
Let us consider the interpretation of the encoding 
\begin{equation*}
\mathsf{f : N \Rightarrow N, g : N \Rightarrow N \vdash En_0(Id_{N \Rightarrow N}(f, g)) : U_0}
\end{equation*}
of the Id-type discussed in \S\ref{HowToEncodeGamesByStrategies}.
The strategy 
\begin{equation*}
\psi \colonequals \mathrm{En}_0(\mathrm{Id}_{N \Rightarrow N}(\pi_1, \pi_2)): (N \Rightarrow N) \mathbin{\&} (N \Rightarrow N) \rightarrow \mathcal{U}_0
\end{equation*}
that interprets this encoding of the Id-type plays as in Figure~\ref{FigEncodingOfId}.
\begin{figure}
\begin{mathpar}
\begin{tabular}{ccccccccccccccc}
$(N$ & $\stackrel{f}{\Rightarrow}$ & $N)$ & $\&$ & $(N$ & $\stackrel{g}{\Rightarrow}$ & $N)$ & $\stackrel{\psi}{\rightarrow}$ & & & & $\mathcal{U}$ \\ \hline
&&&&&&&&&&& $q$ \\
&&&&&&&&&&& $\sharp(\mathrm{Id})$ \\
&&&&&&&&& $q$ \\
&&&&&&&&& $\sharp(\Pi)$ \\
&&&&&&&& $q$ \\
&&&&&&&& $\sharp(N)$
\end{tabular}
\and
\begin{tabular}{ccccccccccccccc}
$(N$ & $\stackrel{f}{\Rightarrow}$ & $N)$ & $\&$ & $(N$ & $\stackrel{g}{\Rightarrow}$ & $N)$ & $\stackrel{\psi}{\rightarrow}$ & & & & $\mathcal{U}$ \\ \hline
&&&&&&&&&&& $q$ \\
&&&&&&&&&&& $\sharp(\mathrm{Id})$ \\
&&&&&&&&& $q$ \\
&&&&&&&&& $\sharp(\Pi)$ \\
&&&&&&&&&& $q$ \\
&&&&&&&&&& $\sharp(N)$
\end{tabular}
\and
\begin{tabular}{cccccccccccc}
$(N$ & $\stackrel{f}{\Rightarrow}$ & $N)$ & $\&$ & $(N$ & $\stackrel{g}{\Rightarrow}$ & $N)$ & $\stackrel{\psi}{\rightarrow}$ & $\mathcal{U}$ && \\ \hline
&&&&&&&& $q$ \\
&&&&&&&& $\sharp(\mathrm{Id})$ \\
&&&&&&&&&&& $q$ \\
&&$q$ \\
$q$ \\
&&&&&&&&& $q$ \\
&&&&&&&&& $n$ \\
$n$ \\
&& $f(n)$ \\
&&&&&&&&&&& $f(n)$
\end{tabular}
\and
\begin{tabular}{ccccccccccccccc}
$(N$ & $\stackrel{f}{\Rightarrow}$ & $N)$ & $\&$ & $(N$ & $\stackrel{g}{\Rightarrow}$ & $N)$ & $\stackrel{\psi}{\rightarrow}$ & $\mathcal{U}$ &&&&& \\ \hline
&&&&&&&& $q$ \\
&&&&&&&& $\sharp(\mathrm{Id})$ \\
&&&&&&&&&&&&&& $q$ \\
&&&&&&$q$ \\
&&&&$q$ \\
&&&&&&&&&&&& $q$ \\
&&&&&&&&&&&& $m$ \\
&&&&$m$ \\
&&&&&& $g(m)$ \\
&&&&&&&&&&&&&& $g(m)$
\end{tabular}
\end{mathpar}
\caption{The strategy on the encoding of the Id-type between functions}
\label{FigEncodingOfId}
\end{figure}
\end{example}

\begin{example}
The elimination rule of N-type with respect to a universe generates the encodings of \emph{transfinite} dependent types.
For instance, the encoding of the type $\mathsf{x : N \vdash List_N(x) \ type}$ of finite lists of natural numbers, which satisfies the judgemental equalities $\mathsf{List_N(\underline{0}) \equiv 1}$ and $\mathsf{List_N(\underline{n+1}) \equiv List_N(\underline{n}) \times N}$, is defined by applying the elimination rule of N-type to the terms
\begin{mathpar}
\mathsf{\vdash En(1) : U}
\and
\mathsf{x : N, y : U \vdash En(El(y) \times N) : U}.
\end{mathpar}

Then, the strategy 
\begin{equation*}
\psi' \colonequals \mathscr{R}^N(\mathrm{En}(1), \mathrm{En}(\mathrm{El}(\pi_2) \mathbin{\&} N)) : N \rightarrow \mathcal{U}_0
\end{equation*}
that interprets this encoding of the list type plays as in Figure~\ref{FigEncodingOfList}.
\begin{figure}
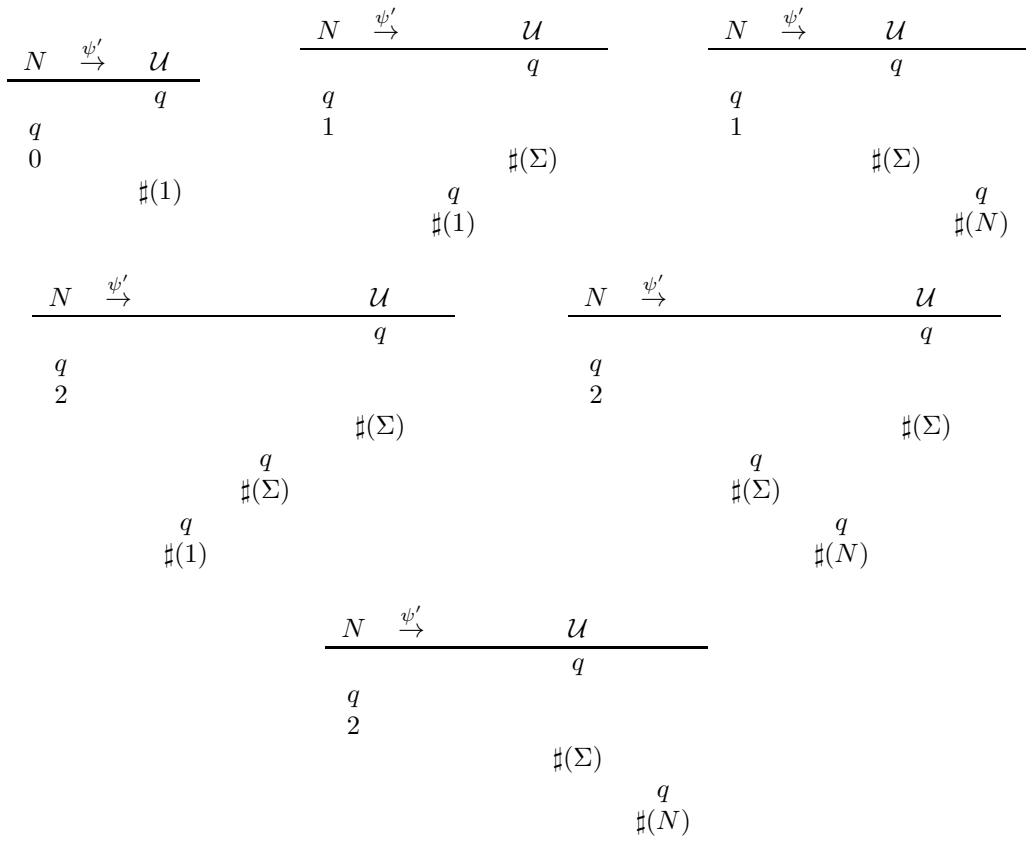

\begin{mathpar}
\begin{tabular}{ccc}
$N$ & $\stackrel{\psi'}{\rightarrow}$ & $\mathcal{U}$ \\ \hline
&&$q$ \\
$q$ \\
$0$ \\
&& $\sharp(1)$
\end{tabular}
\and
\begin{tabular}{ccccc}
$N$ & $\stackrel{\psi'}{\rightarrow}$ & & $\mathcal{U}$ & \\ \hline
&&&$q$ \\
$q$ \\
$1$ \\
&&& $\sharp(\Sigma)$ \\
&& $q$ \\
&& $\sharp(1)$
\end{tabular} 
\and
\begin{tabular}{ccccc}
$N$ & $\stackrel{\psi'}{\rightarrow}$ & & $\mathcal{U}$ & \\ \hline
&&&$q$ \\
$q$ \\
$1$ \\
&&& $\sharp(\Sigma)$ \\
&&&& $q$ \\
&&&& $\sharp(N)$
\end{tabular} 
\and
\begin{tabular}{ccccccc}
$N$ & $\stackrel{\psi'}{\rightarrow}$ &&&& $\mathcal{U}$ & \\ \hline
&&&&&$q$ \\
$q$ \\
$2$ \\
&&&&& $\sharp(\Sigma)$ \\
&&&$q$ \\
&&&$\sharp(\Sigma)$ \\
&&$q$ \\
&&$\sharp(1)$
\end{tabular} 
\and
\begin{tabular}{ccccccc}
$N$ & $\stackrel{\psi'}{\rightarrow}$ &&&& $\mathcal{U}$ & \\ \hline
&&&&&$q$ \\
$q$ \\
$2$ \\
&&&&& $\sharp(\Sigma)$ \\
&&&$q$ \\
&&&$\sharp(\Sigma)$ \\
&&&&$q$ \\
&&&&$\sharp(N)$
\end{tabular} 
\and
\begin{tabular}{ccccccc}
$N$ & $\stackrel{\psi'}{\rightarrow}$ &&&& $\mathcal{U}$ & \\ \hline
&&&&&$q$ \\
$q$ \\
$2$ \\
&&&&& $\sharp(\Sigma)$ \\
&&&&&&$q$ \\
&&&&&&$\sharp(N)$
\end{tabular} 
\end{mathpar}
\caption{The strategy on the encoding of the list type}
\label{FigEncodingOfList}
\end{figure}

Let us note that this list type is out of the scope of the denotational semantics by Abramsky et al. \cite{abramsky2015games,vakar2018game}, let alone its encoding, because their interpretation is limited to \emph{finite inductive types} \cite[Figure~7]{vakar2018game}; also see \cite[\S 4.3]{yamada2022game} on this point. 
This argument in particular implies that their approach cannot interpret the combination of universes and N-type.
\end{example}

\if0
By a straightforward modification of the universes $\mathcal{U}_k$ ($k \in \mathbb{N}$), we can further define the \emph{first transfinite universe p-game} $\mathcal{U}_\omega$:
\begin{definition}[the first transfinite universe p-game]
Fix an injection $\sharp_\omega : \{ 1, 0, N, \Pi, \Sigma, \mathrm{Id} \} \cup \{ \, \mathcal{U}_k \mid k \in \mathbb{N} \, \}$.
The \emph{first transfinite universe predicative (p-) game} $\mathcal{U}_\omega$ is the constant p-game on the game $|\mathcal{U}_\omega| \colonequals \bigcup_{i \in \mathbb{N}}\mathcal{U}_\omega^{(i)}$ defined in the same way as the first universe p-game $\mathcal{U}_0$ except that the subgame $\mathcal{U}_\omega^{(0)} \subseteq |\mathcal{U}_\omega|$ contains the codes of all the universe p-games $\mathcal{U}_k$ ($k \in \mathbb{N}$), i.e., 
\begin{equation*}
\mathcal{U}_\omega^{(0)} \colonequals \mathrm{Pref}(\{ \, q^{\mathrm{OQ}} . \sharp_k(X)^{\mathrm{PA}} \mid X \in \{ 1, 0, N, \mathcal{U}_k \}, k \in \mathbb{N} \, \}).
\end{equation*}
\end{definition}
\fi

\section{Corollaries}
\label{Corollaries}
This last section presents corollaries of Theorem~\ref{ThmGameSemanticsOfUniverses} established in the previous section.
The first corollary is the effectivity of the game semantics of universes (\S\ref{Effectivity}), the second one is the independence of the axiom of equality reflection (\S\ref{IndependenceOfEqualityReflection}), and the last one is the independence of Markov's principle (\S\ref{MP}).

\subsection{Effectivity of game semantics}
\label{Effectivity}
Let us first show the effectivity of our interpretation of universes.
Note that strategies in the CwF $\mathbb{UPG}_\oc$ are the conventional ones (\S\ref{GamesAndStrategies}), which are winning and well-bracketed.
Note also that much more unrestricted strategies that interpret terms in the higher-order functional programming language \textsc{PCF} \cite{scott1993type,plotkin1977lcf} are all effective or \emph{recursive}; see \cite[\S 5]{abramsky2000full} and \cite[\S 5.6]{hyland2000full} for the details.
In essence, terms and morphisms in $\mathbb{UPG}_\oc$ are winning, well-bracketed strategies in the game semantics of \textsc{PCF} that satisfy the additional condition imposed by p-games (Definition~\ref{DefStrategiesOnPredicateGames}). 

The definition of recursive strategies is therefore directly applicable to terms and morphisms in $\mathbb{UPG}_\oc$.
Roughly, assuming that moves in games are encodable by natural numbers, a strategy is \emph{recursive} if its computational steps are all computable (with respect to the encoding of moves by natural numbers) in the standard sense of recursion theory \cite{rogers1967theory}.
We then define: 
\begin{definition}[an effective subCwF $\mathbb{UPG}_\oc^{\mathrm{eff}}$]
Let $\mathbb{UPG}_\oc^{\mathrm{eff}} \hookrightarrow \mathbb{UPG}_\oc$ be the lluf substructural CwF of the CwF $\mathbb{UPG}_\oc$ whose terms and morphisms are all recursive.
\end{definition}

Because strategies in $\mathbb{UPG}_\oc$ that interpret terms in MLTT are much more restricted than those that interpret terms in \textsc{PCF}, it is just straightforward\footnote{Again, the point here is that our strategies are just the conventional ones, so the arguments of the existing methods such as \cite[\S 5]{abramsky2000full} and \cite[\S 5.6]{hyland2000full} are directly applicable.} to verify:
\begin{corollary}[effective game semantics of universes]
The CwF $\mathbb{UPG}_\oc^{\mathrm{eff}}$ is well-defined and supports One-, Zero-, N-, Pi-, Sigma- and Id-types as well as the cumulative hierarchy of universes in the same way as $\mathbb{UPG}_\oc$. 
This in particular establishes effective game semantics of universes.
\end{corollary}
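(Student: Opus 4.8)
The plan is to verify that the CwF structure and all type formers of $\mathbb{UPG}_\oc$ restrict to recursive strategies, relying only on the standard fact — established for the game semantics of \textsc{PCF} in \cite[\S 5]{abramsky2000full} and \cite[\S 5.6]{hyland2000full} — that the class of recursive strategies is closed under the operations from which the model is assembled. Concretely, I would first record that composition $\bullet$, promotion $(\_)^\dagger$, tensor $\otimes$, pairing $\langle\_,\_\rangle$, currying $\lambda$, dereliction, and the prefixing operation $q.a.\sigma$ of Definition~\ref{DefUniversePredicativeGames} all send recursive strategies to recursive strategies: each is presented by an effective procedure on the natural-number codings of moves, and a composite of partial recursive functions is again partial recursive. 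Since terms and morphisms of $\mathbb{UPG}_\oc$ are exactly the winning, well-bracketed \textsc{PCF}-strategies subject to the extra p-game condition (Definition~\ref{DefStrategiesOnPredicateGames}), which filters strategies but imposes no computational demand on the surviving ones, these closure properties transfer verbatim.

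With the closure properties in hand, well-definedness of $\mathbb{UPG}_\oc^{\mathrm{eff}}$ as a lluf substructural CwF is immediate: the identities $\mathrm{der}_{|\Gamma|}$ and the projections $\mathrm{p},\mathrm{v}$ are derelictions (copy-cats), hence recursive; composition and the substitutions are the operations $\bullet$ and $\_\bullet\phi$, which preserve recursiveness; and the extensions $\langle\phi,\check{\alpha}\rangle$ are pairings of recursive strategies. The same bookkeeping handles One-, Zero-, N-, Pi-, Sigma- and Id-types, since their introduction and elimination data ($\lambda$, $\mathrm{App}$, $\mathrm{Pair}$, $\mathcal{R}^\Sigma$, $\mathrm{refl}$) are built solely from the recursive-preserving operations above.

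The only genuinely new content concerns universes, and here I would argue by induction on the construction of a type $A$ — the very induction used in the proof of Theorem~\ref{ThmGameSemanticsOfUniverses} — that $\mathrm{En}(A)$ is recursive. The base encodings $\mathrm{En}(1),\mathrm{En}(0),\mathrm{En}(N),\mathrm{En}(\mathcal{U}_i)$ each perform the single step $q\mapsto\sharp(X)$ and are trivially recursive; the inductive encodings $\mathrm{En}(Y(B,C))$ and $\mathrm{En}(\mathrm{Id}_D(\delta,\delta'))$ are obtained from the recursive data $\mathrm{En}(B),\mathrm{En}(C),\delta,\delta'$ by pairing, currying and prefixing with $q.\sharp(Y)$, so they remain recursive by the closure properties. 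This completes the induction and shows that $\mathbb{UPG}_\oc^{\mathrm{eff}}$ supports the cumulative hierarchy of universes.

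The conceptually delicate point — which I would flag explicitly rather than leave implicit, as it is the only place one might expect an obstacle — is that the decoding function $\mathrm{El}$ is in general \emph{uncomputable}, yet this does \emph{not} threaten effectivity. The reason is that $\mathrm{El}$ outputs \emph{types} (dependent p-games), i.e.\ members of $\mathrm{Ty}(\Gamma)$, whereas the notion of an effective CwF constrains only \emph{morphisms and terms} to be recursive and never types (just as objects of the category are unconstrained). This is precisely the phenomenon isolated in \S\ref{Solution} and Example~\ref{ExCrucialExample}: the copy-cat realising an Id-encoding between functions is recursive even though the equation it encodes is undecidable, because the type dependency is revealed only gradually and never needs to be computed in one go. Thus no obstruction arises, and the corollary follows.
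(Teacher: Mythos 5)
Your proposal is correct and follows essentially the same route as the paper, which simply observes that terms and morphisms of $\mathbb{UPG}_\oc$ are conventional winning, well-bracketed strategies so that the recursiveness arguments of \cite[\S 5]{abramsky2000full} and \cite[\S 5.6]{hyland2000full} apply directly, leaving the details (closure of recursive strategies under the structural operations, recursiveness of the encodings $\mathrm{En}(A)$ by induction, and the harmlessness of an uncomputable $\mathrm{El}$ since types are not required to be recursive) as straightforward. Your write-up merely makes explicit what the paper asserts is routine, including the key point from \S\ref{Solution} and Example~\ref{ExCrucialExample} that effectivity constrains only terms and morphisms, never the decoding of types.
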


This corollary implies that our game semantics of MLTT equipped with the aforementioned types only employs recursive strategies, i.e., the game semantics is \emph{computational}.
Because universes are \emph{types of types} or \emph{sets of sets}, this computational result is nontrivial.

\subsection{Independence of equality reflection}
\label{IndependenceOfEqualityReflection}
Next, let us show the independence of the axiom of \emph{equality reflection} \cite{palmgren1998universes} from MLTT: Given terms $\psi, \psi' \in \mathrm{Tm}(\Gamma, \mathcal{C})$, if $\mathrm{El}(\psi) = \mathrm{El}(\psi') \in \mathrm{Ty}(\Gamma)$, then $\psi = \psi'$.
Then, a key observation is that, by the \emph{intensional} nature of our game semantics, there can be more than one term that encodes the same type. 
For instance, the term $\mathrm{En}(1) \in \mathrm{Tm}(T.N, \mathcal{U})$ that encodes One-type $1 \in \mathrm{Ty}(T.N)$ plays by
\begin{mathpar}
\begin{tabular}{ccc}
$T.N$ & $\stackrel{\mathrm{En}(1)}{\rightarrow}$ & $\mathcal{U}$ \\ \hline
&& $q$ \\
&& $\sharp(1)$
\end{tabular}
\end{mathpar}
while another term $\psi \in \mathrm{Tm}(T.N, \mathcal{U})$ that plays by
\begin{mathpar}
\begin{tabular}{ccc}
$T.N$ & $\stackrel{\psi}{\rightarrow}$ & $\mathcal{U}$ \\ \hline
&& $q$ \\
$q$ && \\
$n$ && \\
&& $\sharp(1)$
\end{tabular}
\end{mathpar}
for all $n \in \mathbb{N}$ also encodes the same type (n.b., this term is given by the elimination rule of N-type).

This argument together with Theorem~\ref{ThmGameSemanticsOfUniverses} immediately implies:
\begin{corollary}[independence of equality reflection]
The axiom of equality reflection is independent from MLTT equipped with One-, Zero-, N-, Pi-, Sigma- and Id-types as well as universes.
\end{corollary}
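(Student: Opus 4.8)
The plan is to exhibit the game-semantic CwF $\mathbb{UPG}_\oc$ of Theorem~\ref{ThmGameSemanticsOfUniverses} as a counter-model. Since $\mathbb{UPG}_\oc$ supports One-, Zero-, N-, Pi-, Sigma- and Id-types (by the supporting theorems) together with the cumulative hierarchy of universes (by Theorem~\ref{ThmGameSemanticsOfUniverses}), it is a sound model of MLTT equipped with all of these constructions. Hence it suffices to produce, inside this single model, a context and two terms into a universe that decode to the \emph{same} type yet are \emph{distinct} as strategies. The resulting failure of the semantic counterpart of equality reflection in a sound model then shows that the axiom cannot be a consequence of the theory it interprets, which is exactly its independence.

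Concretely, first I would fix the data already displayed above: take $\Gamma = T.N$ and the universe $\mathcal{U} = \mathcal{U}_0$, let $\psi \colonequals \mathrm{En}(1) \in \mathrm{Tm}(T.N, \mathcal{U})$ be the term playing $q \mapsto \sharp(1)$, and let $\psi'$ be the term that first opens the domain N-component (playing its question and receiving an arbitrary answer $n$) and only then answers $\sharp(1)$ in $\mathcal{U}$. I would check that $\psi'$ is genuinely a morphism of $\mathbb{UPG}_\oc$, i.e.\ a winning, well-bracketed strategy on $(T.N) \Rightarrow \mathcal{U}$ satisfying the p-game constraint of Definition~\ref{DefStrategiesOnPredicateGames}: it is total and innocent (it always responds and depends only on finite P-views), noetherian (its P-views are bounded in length), and well-bracketed (the outer question in $\mathcal{U}$ is answered last). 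Such a $\psi'$ arises, for instance, from the elimination rule of N-type applied to the constant family encoding One-type.

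Second I would verify the two requirements. For distinctness, $\psi \neq \psi'$ as strategies because they already differ on their even-length plays: $q.\sharp(1) \in \psi$ whereas $q.\sharp(1) \notin \psi'$, since after the initial question $\psi'$ responds inside the domain rather than answering $\sharp(1)$ (Definition~\ref{DefStrategies}). For equal decoding, I would establish $\mathrm{El}(\psi') = \mathrm{El}(\psi) = 1 \in \mathrm{Ty}(T.N)$: the identity $\mathrm{El}(\mathrm{En}(1)) = 1$ is the axiom \textsc{U-Comp} proved in Theorem~\ref{ThmGameSemanticsOfUniverses}, while for $\psi'$ I would unfold $\mathrm{El}^{(0)}_{0, T.N}$ from Definition~\ref{DefUniversePredicativeGames} and observe that, for every $\gamma_0^\dagger \in \mathbb{WPG}_\oc(\oc (T.N))$, the composite $\psi' \bullet \gamma_0 : \mathcal{U}_0$ hides the domain interrogation and reduces to $\underline{\sharp(1)}$, so that $\mathrm{El}(\psi' \bullet \gamma_0) = 1$ and hence $\mathrm{El}(\psi')$ is the constant family at $1$, that is, precisely $1 \in \mathrm{Ty}(T.N)$.

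The main obstacle I anticipate is this last decoding computation: one must ensure that the extra, intensional copy-cat interrogation of the N-component genuinely does not perturb the decoded dependent p-game for \emph{any} input $\gamma_0$, which amounts to checking that composition followed by hiding collapses $\psi'$ to $\underline{\sharp(1)} : \mathcal{U}_0$ uniformly in $\gamma_0$. Once this is in place, $\mathbb{UPG}_\oc$ is a model of MLTT with universes in which equality reflection fails; since any rule derivable in MLTT is validated by every sound model, equality reflection is not derivable, establishing the claimed independence. I would close by remarking that the gap between $\psi$ and $\psi'$ is a purely \emph{intensional} phenomenon, invisible to extensional models of $\mathrm{El}$, which is exactly what lets the game-semantic interpretation refute the axiom here.
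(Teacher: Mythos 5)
Your proposal is correct and follows essentially the same route as the paper: the paper's own argument exhibits exactly this pair of terms in $\mathrm{Tm}(T.N,\mathcal{U})$ --- $\mathrm{En}(1)$ playing $q \mapsto \sharp(1)$, and the N-elimination-derived term that interrogates the $N$-component before answering $\sharp(1)$ --- and concludes independence from their distinctness as strategies together with Theorem~\ref{ThmGameSemanticsOfUniverses}. Your additional verifications (winning/well-bracketing of $\psi'$, and the uniform collapse of $\psi' \bullet \gamma_0$ to $\underline{\sharp(1)}$ under hiding) are details the paper leaves implicit, not a different method.
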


We have seen that the intensional nature of strategies plays a crucial role for this corollary, but it is not available for other computational semantics such as domains and realisability \cite{palmgren1993information,streicher2012semantics,blot2018extensional}.

\subsection{Independence of Markov's principle}
\label{MP}
Finally, the previous work \cite[\S 4.7]{yamada2022game} shows that \emph{Markov's principle} \cite{markov1962constructive} is invalid in the game semantics, which implies that the principle is \emph{independent} from MLTT equipped with One-, Zero-, N-, Pi-, Sigma- and Id-types.
Markov's principle is a well-known principle in constructive mathematics, and it depends on the school of constructive mathematics whether the principle is to be regarded as \emph{constructive}.
Roughly, the principle postulates that if it is impossible that there is no natural number $n \in \mathbb{N}$ such that $f(n) = 0$ for a function $f : \mathbb{N} \rightarrow \mathbb{N}$, then there \emph{is} a natural number $n' \in \mathbb{N}$ such that $f(n') = 0$.

The proof of this independence result given in the previous work is also valid for the present game semantics \emph{without any modification}.
This immediately extends the independence result to universes:
\begin{corollary}[independence of Markov's principle from universes]
Markov's principle is independent from MLTT equipped with One-, Zero-, N-, Pi-, Sigma- and Id-types as well as universes.
\end{corollary}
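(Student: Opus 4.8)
The plan is to reuse the refutation of Markov's principle from the previous work \cite[\S 4.7]{yamada2022game} essentially verbatim, exploiting the \emph{modularity} of the game-semantic argument: a single sound model in which the principle fails already witnesses its non-derivability, and enlarging the theory by universes merely enlarges the model without adding any strategy to the particular p-game at issue. Thus ``independent'' here is established in the form ``not derivable'', since a derivable statement would, by soundness, be inhabited in every sound model.

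First I would recall the p-game interpreting Markov's principle. Writing $\neg A \colonequals A \Rightarrow 0$, the principle is the closed type
\begin{equation*}
\mathsf{MP} \colonequals \Pi(f : N \Rightarrow N).\ \neg\neg\big(\Sigma(n : N).\ \mathrm{Id}_N(f\,n, \underline{0})\big) \Rightarrow \Sigma(n : N).\ \mathrm{Id}_N(f\,n, \underline{0}),
\end{equation*}
which is built solely from $0$-, $N$-, $\Sigma$-, $\Pi$- and $\mathrm{Id}$-types. Its interpretation $\llbracket \mathsf{MP} \rrbracket$ is therefore assembled by the type formers of Theorems~\ref{ThmGameSemanticsOfPiType}, \ref{ThmGameSemanticsOfSigmaType} and \ref{LemGameSemanticIdTypes}, and this p-game is \emph{literally the same object} in $\mathbb{UPG}_\oc$ as the one used in \cite{yamada2022game}, because $\mathbb{UPG}_\oc$ shares the underlying category, the CwF structure of Theorem~\ref{DefCwFWPG} and these type formers with $\mathbb{WPG}_\oc$. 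Consequently the set of winning, well-bracketed strategies on $\llbracket \mathsf{MP} \rrbracket$ is unchanged, and the previous work exhibits a generic input strategy $f$ for which no winning strategy can extract a witness $n$ with $f\,n = \underline{0}$ from the computationally empty proof of the double negation; hence $\llbracket \mathsf{MP} \rrbracket$ admits no term.

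Next I would invoke soundness of the \emph{extended} model. Combining Theorem~\ref{DefCwFWPG}, the type formers for the basic types, and Theorem~\ref{ThmGameSemanticsOfUniverses}, the CwF $\mathbb{UPG}_\oc$ soundly interprets MLTT equipped with One-, Zero-, N-, Pi-, Sigma- and Id-types together with the cumulative hierarchy of universes, so every derivable closed judgement $\mathsf{\vdash a : A}$ yields a term $\llbracket a \rrbracket \in \mathrm{Tm}_{\mathbb{UPG}_\oc}(T, \llbracket A \rrbracket)$. I would then argue by contradiction: were $\mathsf{MP}$ derivable in this extended theory, soundness would furnish a term of $\llbracket \mathsf{MP} \rrbracket$, contradicting the preceding paragraph. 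Therefore $\mathsf{MP}$ is not derivable, i.e.\ it is independent, which is the assertion of the corollary.

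The only point that genuinely requires the new Theorem~\ref{ThmGameSemanticsOfUniverses} — and hence is the crux — is soundness for the enlarged theory. A hypothetical derivation of $\mathsf{MP}$ in MLTT-with-universes may use universes internally (for instance to construct transfinite dependent types by N-elimination with respect to $\mathcal{U}_k$), so I must know that its interpretation still lands in $\mathbb{UPG}_\oc$ and remains a winning, well-bracketed strategy. This is exactly what Theorem~\ref{ThmGameSemanticsOfUniverses} guarantees, and it is what makes the reasoning robust: the refutation lives entirely in a universe-free fragment, yet soundness closes the model under the whole theory, so no detour through universes can conjure an inhabitant of the fixed, term-free p-game $\llbracket \mathsf{MP} \rrbracket$. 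I do not expect any essential obstacle beyond assembling these two ingredients, since the combinatorial heart of the refutation is imported unchanged from \cite{yamada2022game}.
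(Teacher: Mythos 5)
Your proposal is correct and takes essentially the same route as the paper: the paper likewise observes that the refutation of Markov's principle from \cite[\S 4.7]{yamada2022game} remains valid for the present model \emph{without any modification} (the p-game interpreting the principle and its set of terms are identical in $\mathbb{UPG}_\oc$ and $\mathbb{WPG}_\oc$), and that Theorem~\ref{ThmGameSemanticsOfUniverses} makes the model sound for the theory enlarged by universes, whence the independence result extends. Your explicit reading of ``independent'' as ``not derivable'' also matches the paper's own usage, which infers independence from the invalidity of the principle in the (sound) game model.
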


Again, this game-semantic proof \cite{yamada2022game} takes advantages of the \emph{intensional} nature of game semantics, which is not available for other computational semantics of MLTT.

Coquand and Manna \cite{mannaa2017independence} show the independence of Markov's principle from MLTT equipped with a single universe for the first time in the literature.
Their independence proof is \emph{syntactic}, which stands in contrast to our game-semantic proof.
As we have mentioned, their syntactic proof is not automatically extendable to other types, and an extension can be nontrivial. 
In contrast, our game-semantic reasoning is \emph{modular}: A meta-theoretic result on MLTT given by our game semantics is automatically extended to new types as soon as the game semantics is extended to the types. 
This is one of the strong advantages of the game-semantic approach for the study of type theory and constructive mathematics.

\section{Conclusion and future work}
\label{ConclusionAndFutureWork}
We have established computational game semantics of the cumulative hierarchy of universes for the first time in the literature. 
We have also applied this game semantics to the meta-theoretic study of MLTT and shown that equality reflection and Markov's principle are both independent from MLTT equipped with the hierarchy of universes, illustrating advantages of the game-semantic approach. 

For future work, we plan to extend the game semantics further to Martin-L\"{o}f's \emph{well-founded tree (W-) types} \cite{martin1982constructive}.
The resulting game semantics will be a very powerful semantic foundation of constructive mathematics, e.g., it will interpret Aczel's constructive set theory (CZF) \cite{aczel1986type} since CZF is translatable into MLTT equipped with universes and W-types.

\bibliographystyle{amsalpha}
\bibliography{LinearLogic,CategoricalLogic,GamesAndStrategies,HoTT,RecursionTheory}

\end{document}